\newtheorem{thm}{Theorem}[section]  
\newtheorem{lem}[thm]{Lemma}
\newtheorem{defi}[thm]{Definition}
\newtheorem{prop}[thm]{Proposition}
\newtheorem{es}[thm]{Example}
\newtheorem{rem}[thm]{Remark}
\title{ Finite Morphisms between projective Varieties and Skeleta}
\author{John Welliaveetil }
\begin{document}

\maketitle

\textbf{Address} : John Welliaveetil, Institut de Mathématiques de Jussieu, UMR 7586 du CNRS, Université Pierre
et Marie Curie, Paris, France. \\

\textbf{E-mail address} : welliaveetil@gmail.com

\begin{abstract}
     In this paper we study finite surjective morphisms between irreducible projective varieties in terms of the morphisms
they induce between the respective analytifications. The background for the principal result is as follows. 
Let $V'$ and $V$ be irreducible, projective varieties over 
an algebraically closed, non-Archimedean real valued field $k$ with $V$ normal and $\phi: V' \to V$ be a finite surjective morphism .
Let $x \in V^{\mathrm{an}}(L)$ (Remark 1.1), where $L/k$ is an algebraically 
closed complete non-Archimedean real valued field extension.
 We associate canonically 
 to $x$ an $L$-point of the space $(V \times_k L)^{\mathrm{an}}$ which lies on the fibre over $x$ and denote this point 
$x_L$. 
 The embedding of $V$ into some $n$-dimensional projective space
defines in a natural way a family 
of open neighbourhoods of $x_L$ contained in $(V \times_k L)^{\mathrm{an}}$.
Each element of this family is parametrized by an $(n+1)^2$-tuple which quantifies its size. 
Of particular interest to us will be those neighbourhoods $O$ whose 
preimage for the morphism $(\phi \times \mathrm{id}_L)^{\mathrm{an}}$ decomposes into the disjoint union of
 homeomorphic copies of $O$ 
via $(\phi \times \mathrm{id}_L)^{\mathrm{an}}$. Let $\mathcal{G}_{x_L}$
 denote the sub collection of elements of this form. 
 Theorem 1.7 shows that there exists a deformation retraction of 
the space $V^{\mathrm{an}}$ onto a finite simplicial complex such that along the fibres of the retraction the 
size of the largest element belonging to $\mathcal{G}_{x_L}$ is constant.   
\end{abstract}

\textbf{Keywords :} Berkovich Spaces - Definable deformation retraction - Skeleta \\

\textbf{Mathematics Subject Classification (2010):} Primary - 03C98, 14G22. Secondary - 14T05.
This research was funded by the ERC Advanced Grant NMNAG.
 
\tableofcontents

\section{ Introduction }
    In this paper we study finite surjective morphisms between irreducible projective varieties 
over non-Archimedean real valued fields in terms of the morphisms they induce between the analytifications of the varieties. 
As topological spaces the analytifications are Hausdorff, compact and path-connected. 
 The theorem we prove implies that the induced morphism when viewed as a continuous map between topological spaces 
admits a certain uniform behaviour. Before stating the theorem 
in full generality, we provide its motivation by considering the case of a finite endomorphism of the projective line.

    Let $k$ be an algebraically closed, complete non-Archimedean real valued field whose value group $|k^*|$ contains at least two elements 
    and is a sub group of $(\mathbb{R}_{>0},*)$. 
 Let $\mathbb{P}_k^{1,\mathrm{an}}$ be the Berkovich analytification of the projective line 
$\mathbb{P}^1_k$. The analytification $\mathbb{P}^{1,\mathrm{an}}_k$ allows us  
 to use the valuative topology provided by the field to study an algebraic endomorphism. For a point 
$x  \in \mathbb{P}_k^1(k) \subset \mathbb{P}^{1,\mathrm{an}}_k$, we have the notion of a Berkovich closed disk or Berkovich 
open ball centred at $x$ within the space $\mathbb{P}^{1,\mathrm{an}}_k$ which contains the
naive closed or open disk
around $x$. By the naive closed (open) disk around $x \in k$ of radius $r \in \mathbb{R}_{> 0}$, we mean the set
$\{y \in k| |y-x| \leq r\}$ ($\{y \in k| |y-x| < r\}$). As opposed to their naive counterparts, the Berkovich open and closed disks are locally compact 
and contractible. 

   Let $\phi : \mathbb{P}^1_k \to \mathbb{P}^1_k$ be a finite morphism. For a
complete non-Archimedean real valued algebraically closed field extension $L/k$, let
$\phi_L$ denote the morphism 
\begin{align*}
  \phi \times \mathrm{id}_L : \mathbb{P}^1_k \times_k \mathrm{Spec}(L) \to \mathbb{P}^1_k \times_k \mathrm{Spec}(L) .
\end{align*}
     The analytification of a $k$-variety of finite type (cf. Section 2.1)
is functorial and hence endomorphisms of the projective line will induce endomorphisms of its analytification. 
That is, the morphism $\phi$ induces a morphism
\begin{align*}
  \phi^{\mathrm{an}} : \mathbb{P}^{1,\mathrm{an}}_k \to \mathbb{P}^{1,\mathrm{an}}_k.
\end{align*}
   The morphism $\phi_L^{\mathrm{an}}$ is similarly defined and it is to be noted that 
$\phi_L^{\mathrm{an}} = \phi^{\mathrm{an}} \times \mathrm{id}_L^{\mathrm{an}}$.  

   Our reason for introducing the morphism $\phi_L$ for a   
complete non-Archimedean real valued algebraically closed field extension $L/k$
is to deal with all points of the 
analytification of the
projective line over $k$ and not just those points for which $\mathcal{H}(x) = k$ (cf. 2.1.1). 
When discussing the points of the 
analytification of a $k$-variety, 
we make use of the description outlined in Section 2.1. 
Let $x \in \mathbb{P}^{1,\mathrm{an}}_k(L)$ i.e. $\mathcal{H}(x) \subset L$.
The image of $x \in \mathbb{P}^{1,\mathrm{an}}_k$ for the morphism 
$\pi : \mathbb{P}^{1,\mathrm{an}}_k \to \mathbb{P}^1_k$ (cf. 2.1) 
is an $L$-point of $ \mathbb{P}^1_k$. We abuse notation and refer to this point as $x$ as well. 
The pair $x : \mathrm{Spec}(L) \to \mathbb{P}^{1}_k$ and 
$\mathrm{id}_L : \mathrm{Spec}(L) \to \mathrm{Spec}(L)$ defines a closed point of the 
variety $\mathbb{P}^1_k \times_k \mathrm{Spec}(L)$ which we denote $x_L$. 
The following remark generalizes this construction. 

\begin{rem} Let $V$ be a $k$-variety. The notation $x \in V^{\mathrm{an}}(L)$
  will be used to mean that the image of the point $x$
   for the morphism $V^{\mathrm{an}} \to V$ which is 
   defined in Section 2.1 is an $L$-point.
   We abuse notation and refer to this point as $x$ as well. 
  The pair $x : \mathrm{Spec}(L) \to V$ and 
$\mathrm{id}_L : \mathrm{Spec}(L) \to \mathrm{Spec}(L)$ defines a closed point of the 
variety $V \times_k \mathrm{Spec}(L)$ which we denote $x_L$. This construction will be referred to frequently in what follows. 
Note that the notation $x \in V^{\mathrm{an}}(L)$ defined in this manner is
 equivalent to the following inclusion of non-Archimedean real valued complete fields,  $\mathcal{H}(x) \subset L$. The field 
$\mathcal{H}(x)$ is the non - Archimedean geometry analogue of the residue field of a point in algebraic geometry. 
It is
 defined in 2.1.1. 
 \end{rem}   
    
  We now introduce the theorem concerning finite endomorphisms of the projective line over $k$.        
\begin{thm}
  Let $\phi : \mathbb{P}^1_k \to \mathbb{P}^1_k$ be a finite morphism.
  Let $x \in \mathbb{P}^{1,\mathrm{an}}_k$ and $L/k$ be
any complete non-Archimedean real valued algebraically closed field extension of $k$ such that 
$x \in \mathbb{P}^{1,\mathrm{an}}_k(L)$. Let $f(x)$ be
the minimum of $1$ and the radius of the largest Berkovich open ball $B \subset \mathbb{P}^{1,\mathrm{an}}_L$ around 
$x_{L} \in \mathbb{P}^{1}_L(L) \subset \mathbb{P}^{1,\mathrm{an}}_L$
 whose preimage under 
$\phi_{L}^{\mathrm{an}}$ is the disjoint union of 
homeomorphic copies of $B$ via $\phi_{L}^{\mathrm{an}}$.
 The function  
$f : \mathbb{P}^{1,\mathrm{an}}_k \to \mathbb{R}_{\geq 0}$ is not identically zero and well defined. 
 There exists a finite simplicial complex
$\Upsilon \subset \mathbb{P}^{1,\mathrm{an}}_k$, a generalised real interval $I := [i,e]$ and a deformation retraction 
\begin{align*}
  \psi : I \times \mathbb{P}_k^{1,\mathrm{an}} \to \mathbb{P}_k^{1,\mathrm{an}}
\end{align*}
 such that $\psi(e,\mathbb{P}_k^{1,\mathrm{an}}) = \Upsilon$ and the function $f$ is constant on the fibres of this retraction i.e. 
 for every $x \in \mathbb{P}_k^{1,\mathrm{an}}$ we have that $f(x) = f(\psi(e,x))$. Furthermore, the function $log_c|f|$ is 
 piecewise linear when restricted to $\Upsilon$ where $0 < c < 1$ is a real number.       
\end{thm}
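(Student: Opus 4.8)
The plan is to first pin down $f$ explicitly and then feed it into the Hrushovski--Loeser machinery of definable deformation retractions. Since $\phi$ is a finite morphism of smooth projective curves, its branch locus $\mathrm{Br}(\phi) \subset \mathbb{P}^1_k$ is a finite set of closed points, and for any $L/k$ as in the statement the branch locus of $\phi_L$ is the same finite set $\mathrm{Br}(\phi)_L$ viewed in $\mathbb{P}^1_L$. The first step is the local dichotomy: a Berkovich open ball $B \subset \mathbb{P}^{1,\mathrm{an}}_L$ has $(\phi_L^{\mathrm{an}})^{-1}(B)$ equal to a disjoint union of homeomorphic copies of $B$ if and only if $B \cap \mathrm{Br}(\phi)_L = \emptyset$. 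One direction is clear: a connected component of the preimage on which $\phi_L^{\mathrm{an}}$ restricts to a homeomorphism is finite of degree one over $B$, hence an isomorphism onto $B$, so $\phi_L^{\mathrm{an}}$ is finite étale over $B$, which exactly means $B$ avoids the branch points. Conversely, if $B$ avoids $\mathrm{Br}(\phi)_L$ then $(\phi_L^{\mathrm{an}})^{-1}(B) \to B$ is finite étale; since an open Berkovich ball is simply connected, every finite étale cover of it is split, so $(\phi_L^{\mathrm{an}})^{-1}(B) = \bigsqcup_{j=1}^{\deg\phi} B$ with each factor mapping isomorphically. Consequently $f(x) = \min\bigl(1,\ \delta(x_L, \mathrm{Br}(\phi)_L)\bigr)$, where $\delta(x_L, \mathrm{Br}(\phi)_L)$ is the radius of the largest open ball around the rigid point $x_L$ avoiding the branch points; in an affine coordinate in which $x_L$ and all branch points are finite this is $\min_{b \in \mathrm{Br}(\phi)} |x_L - b|_L$, and in general it is the analogous quantity for the spherical metric on $\mathbb{P}^{1,\mathrm{an}}_L$.

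From this formula, well-definedness and non-triviality are immediate. The numbers $|x_L - b|_L$ depend only on the valued field $\mathcal{H}(x)$ together with the embedding $\mathcal{H}(x)\hookrightarrow L$ over $k$, not on the ambient $L$, because any two completed algebraically closed extensions of $\mathcal{H}(x)$ agree on the value of $x_L - b$; hence $f(x)$ is independent of the choice of $L$. Moreover $f(x) = 0$ precisely when $x_L \in \mathrm{Br}(\phi)_L$, i.e. when $x$ is one of the finitely many branch points of $\phi$ sitting in $\mathbb{P}^1(k) \subset \mathbb{P}^{1,\mathrm{an}}_k$, so $f$ is not identically zero; in fact $f$ is bounded below by a positive constant on a neighbourhood of the Gauss point. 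Next one records that $\log_c|f|$ is a continuous, $\Gamma_\infty$-valued definable function on $\mathbb{P}^{1,\mathrm{an}}_k$ in the sense of Hrushovski--Loeser: the assignment $x \mapsto x_L$ and the finitely many functions $x \mapsto |x_L - b|$, $b \in \mathrm{Br}(\phi)$, are given by semialgebraic formulas in the language of valued fields, and $f$ is built from them by the definable operations $\min$, $\max$ and capping at $1$; continuity in the Berkovich topology is visible from the explicit expression $x \mapsto \max(|a-b|, r)$ for a point of type $\eta_{a,r}$.

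Now I would invoke the main deformation retraction theorem of Hrushovski--Loeser for the curve $\mathbb{P}^1_k$, in the refined version that allows one to prescribe a finite collection of definable $\Gamma_\infty$-valued functions to be respected: there exist a generalised real interval $I := [i,e]$, a finite simplicial complex $\Upsilon \subset \mathbb{P}^{1,\mathrm{an}}_k$ (a finite metric graph, which may be taken to be the skeleton of a semistable vertex set of $(\mathbb{P}^1_k, \mathrm{Br}(\phi))$, so that $\mathrm{Br}(\phi) \subset \Upsilon$), and a deformation retraction $\psi : I \times \mathbb{P}^{1,\mathrm{an}}_k \to \mathbb{P}^{1,\mathrm{an}}_k$ with $\psi(e,\mathbb{P}^{1,\mathrm{an}}_k) = \Upsilon$ such that $\log_c|f|$ is $\psi$-invariant; in particular $f(x) = f(\psi(e,x))$ for every $x$, i.e. $f$ is constant on the fibres of the retraction. (That the branch points are fixed by $\psi$ is in fact automatic from the invariance of $f$, since $f^{-1}(0) = \mathrm{Br}(\phi)$ is finite and $\psi(\cdot,b)$ is continuous on the connected set $I$.) The piecewise-linearity of $\log_c|f|$ on $\Upsilon$ is then essentially free from the formula: parametrising each edge of $\Upsilon$ so that a point has the form $\eta_{a,r}$ with affine coordinate $\log_c r$, one has $\log_c|x_L - b| = \log_c\max(|a-b|,r) = \min(\log_c|a-b|,\ \log_c r)$, which is piecewise linear with at most one breakpoint; since $\log_c|f| = \max\bigl(0,\ \max_{b \in \mathrm{Br}(\phi)} \log_c|x_L - b|\bigr)$, it is piecewise linear on $\Upsilon$ as a finite maximum of piecewise linear functions.

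I expect the main obstacle to be the precise articulation of the middle step: verifying that $f$ is genuinely a \emph{definable} function to which the Hrushovski--Loeser retraction theorem applies with the branch locus marked, and carrying out the bookkeeping that translates between the model-theoretic formalism of stable completions and generalised intervals and the Berkovich-analytic statement of the theorem. By contrast, the local dichotomy of the first step is routine once one uses that open Berkovich balls carry no nontrivial finite étale covers, with only mild additional care required when the residue characteristic is positive, where one checks directly that sufficiently small étale covers of a ball split by a Hensel-type argument.
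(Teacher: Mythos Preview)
Your local dichotomy ``$(\phi_L^{\mathrm{an}})^{-1}(B)$ splits into homeomorphic copies of $B$ if and only if $B\cap\mathrm{Br}(\phi)_L=\emptyset$'' is false in positive residue characteristic, and both directions break. For the forward direction, you argue that a component on which $\phi_L^{\mathrm{an}}$ is a homeomorphism must be finite of degree one; but in characteristic $p$ the Frobenius $z\mapsto z^p$ is a homeomorphism of $\mathbb{P}^{1,\mathrm{an}}$ of degree $p$, so ``homeomorphism'' does not force ``degree one'' or ``\'etale''. For the backward direction, you assert that open Berkovich balls are (\'etale) simply connected so that finite \'etale covers split; this is false when the residue characteristic is $p$. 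Concretely, over $\mathbb{C}_p$ take $\phi(z)=z^p$ and $B=B(1,r)$ with $|p|^{p/(p-1)}<r<1$: then $B$ avoids the branch locus $\{0,\infty\}$, yet the preimage is a single ball containing all $p$-th roots of unity and $\phi_L^{\mathrm{an}}$ has degree $p$ on it. Likewise, in equal characteristic $p$ the paper's own Example~1.4 ($z\mapsto z^p-z$) has $\mathrm{Br}(\phi)=\{\infty\}$ and $f\equiv 1$ on $\mathbb{A}^{1,\mathrm{an}}$, whereas your spherical-distance formula would give $f(x)=1/|x_L|$ for $|x_L|>1$. Your closing remark that positive residue characteristic requires only ``mild additional care'' via Hensel is therefore not a patch: Hensel gives splitting for \emph{sufficiently small} balls, but your formula needs splitting for all balls of radius up to the branch distance, which simply fails.

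The paper's proof proceeds quite differently and avoids any explicit formula for $f$. Instead of characterising the splitting balls via the branch locus, it shows abstractly (Lemma~4.5) that the set of pairs $(\mathbf{r},x)$ for which the preimage of the corresponding ball splits is \emph{definable}, by counting connected components uniformly using the relative version of the Hrushovski--Loeser retraction (Theorem~10.7.1 in \cite{loes}) and comparing with the separable degree of $\phi$ (Lemmas~4.2--4.4; note the explicit factorisation through a purely inseparable morphism in Lemma~4.8, which is exactly what absorbs the Frobenius issue). The function $g'_{\inf}$ built from this definable set is then fed into Theorem~2.11 to produce the retraction, and Theorem~1.2 is recovered from Theorem~1.7 via the translation in Section~6 (Propositions~6.1 and~6.2). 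Your idea of invoking Hrushovski--Loeser at the end is thus in the right spirit, but the definability of $f$ comes from the machinery of \cite{loes} applied fibrewise, not from a closed formula in terms of $\mathrm{Br}(\phi)$. In residue characteristic zero your argument does go through and gives a pleasant explicit description of $f$, but it does not prove the theorem as stated.
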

   
   \begin{rem}  We fix the real number $c$ which appears in the portion of the theorem above concerning piecewise linearity and 
     hence forth write $\mathrm{log}(|f|)$ in place of $\mathrm{log}_c(|f|)$. 
     \end{rem} 
     The notion of a generalised interval is discussed in Section 3.9 \cite{loes}.
      We now provide an example which illustrates the behaviour of the function $f$ in Theorem $1.2$ clearly. 
      
\begin{es}
   Let $k$ be an algebraically closed    
 complete non-trivially valued non-Archimedean field which is of characteristic $p$.
Consider the morphism $\phi : \mathbb{P}_k^1 \to \mathbb{P}_k^1$ given by $z \mapsto z^p - z$.                   
Let $L/k$ be a non-Archimedean real valued field extension. 
The morphism $\phi_L$ is \'etale over every point other than $\infty$.
 Furthermore it can be shown that $f(x) = 1$ if 
$x \neq \infty$ and $0$ at $\infty$. Let
$\Upsilon$ be a 
 finite graph containing the point $\infty$ and which contains at least one other point.  
 Since there is a deformation retraction of 
  $\mathbb{P}_k^{1,\mathrm{an}}$ onto any finite sub-graph, it follows that 
  there exists a deformation retraction 
 \begin{align*}
\psi : I \times \mathbb{P}_k^{1,\mathrm{an}} \to \mathbb{P}_k^{1,\mathrm{an}}
\end{align*}
   such that the function $f$ is 
constant along the fibres of the retraction.  
 \end{es}  
  
     Our first goal is to generalize Theorem 1.2 to the case of finite surjective morphisms between irreducible, projective varieties. 
A problem standing in the way of any attempt at a generalisation is that there is no intrinsic notion of
 an open disk in $V^{\mathrm{an}}$ if $V$ is a projective $k$-variety of finite type.
However, as $V$ is projective there exists a closed immersion 
$V \hookrightarrow \mathbb{P}^n_k$ for some $n \in \mathbb{N}$. We identify $V$ with its image under the closed immersion.  
The space $\mathbb{P}^{n,\mathrm{an}}_k$ can be equipped with a finite formal cover [\cite{berk}, Section 4.3] 
such that each element of this cover is isomorphic to the $n$-dimensional Berkovich closed disk $\mathcal{M}(k\{T_1,\ldots,T_n\})$.
Let $\{A_i\}_i$ denote this cover. 
The intersection of the
 elements of the formal cover with the image of the 
immersion $V^{\mathrm{an}} \hookrightarrow \mathbb{P}^{n,\mathrm{an}}_k$ 
defines a formal cover of the space $V^{\mathrm{an}}$, namely 
$\{A_i \cap V^{\mathrm{an}}\}_i $. Furthermore, for a non-Archimedean real valued algebraically closed field extension 
$L/k$ 
the construction extends to the analytic space $(V_L)^{\mathrm{an}} := (V \times_k L)^{\mathrm{an}}$. 
The $n$ - dimensional Berkovich open balls contained in $\mathbb{P}^{n,\mathrm{an}}_L$ allow us to generalise 
Theorem 1.2. We now provide a sketch of the details of this construction.  
 
Let $L/k$ be a non-Archimedean real valued algebraically closed field extension and
$x \in \mathbb{P}^{n,\mathrm{an}}_L(L)$ (Remark 1.1). Let $x_L \in A_{i,L}$ for some $i$. 
Associated to the point $x_L$
 is a collection of open   
neighbourhoods in $\mathbb{P}^{n,\mathrm{an}}_L(L)$, namely the Berkovich  
open balls around $x_L$ contained in $A_{i,L}$.   
We denote this family of open neighbourhoods $\mathcal{O}_{x_L}$. 
Let $G \in \mathcal{O}_{x_L}$. 
 For every $j$ such that  
$x_L$ belongs to $A_{j,L}$, it can be checked that $G$ is a Berkovich open ball in 
$A_{j,L}$ as well.
This implies that the family $\mathcal{O}_{x_L}$ is defined independent of the 
element of the affinoid chart which contains $x_L$ and is hence well defined. 
We now define a poly radius associated to the elements of $\mathcal{O}_{x_L}$. 
Let $x_L$ have homogenous coordinates $[x_{1,L} : \ldots : x_{n+1,L}]$ and let
 $W \in \mathcal{O}_{x_L}$. We can associate an $(n+1)^2$-tuple denoted $h_L(W)$
 to the open neighbourhood $W$ as follows. If for an index $t$, $x_{L} \in A_{t,L}$ then let 
$\mathbf{r}_t = (r_1,\ldots,r_{n+1})$ be such that $r_t = 1$ and the Berkovich open ball 
$B_{W,t}$ is defined by the equations $|(T_j/T_t - x_{j,L}/x_{t,L})(p)| < r_j$ for $j \neq t$.
 If on the other hand $x_{L}$ does not belong to $A_{t,L}$ then let $\mathbf{r}_t = (1,\ldots,1)$. 
 We define $h_L(W) := (\mathbf{r}_t)_t$. Let 
 $\mathcal{O}_{L} := \bigcup_{x \in \mathbb{P}^{n,\mathrm{an}}_k(L)} \mathcal{O}_{x_L}$.
 We can extend the above construction to define a function 
 $h_L : \mathcal{O}_{L} \to \mathbb{R}_{\geq 0}$. 
Note that the
sets $\mathcal{O}_{x_L}$ depend on the affine chart chosen for $\mathbb{P}^{n}$.
In the case of $\mathbb{P}^1_k$ with $x \in \mathbb{P}^{1,\mathrm{an}}_k(L)$, 
 the set $\mathcal{O}_{x_L}$ associated to the construction above
 is discussed in Section 2.2.1. In what follows we explain how the family $\mathcal{O}_L$ can be ordered. 
 
 \begin{rem}   
    We now introduce a collection $S$ of functions from $\mathbb{R}_{> 0}^{(n+1)^2}$ to $\mathbb{R}_{> 0}$.
Let $g \in S$ if and only if
\begin{enumerate}
\item The function $g$ is continuous. 
 \item If $(r_{i,j})_{i,j}$ and $(s_{i,j})_{i,j}$ are $(n+1)^2$-tuples such that 
$r_{i,j} \leq s_{i,j}$ then $g((r_{i,j})_{i,j}) \leq g((s_{i,j})_{i,j})$.
\item $g$ is a definable function (in the model theoretic sense) in the language of Ordered Abelian groups. \end{enumerate}

     Let $g \in S$. We can extend the function $g$ so that it defines a function 
     $\mathbb{R}_{\geq 0}^{(n+1)^2} \to \mathbb{R}_{\geq 0}$ by mapping any tuple 
     $\mathbf{r} = (r_{i,j})_{i,j} \in \mathbb{R}_{\geq 0}^{(n+1)^2}$ such that $r_{l,m} = 0$ for some $l,m$ to 
     $0$. An element of the class of functions $S$ can thus be extended so that it defines a total ordering on
     $\mathbb{R}_{\geq 0}^{(n+1)^2}$ which extends the partial ordering 
   given by : $(r_{i,j})_{i,j} \leq (s_{i,j})_{i,j}$ if 
$r_{i,j} \leq s_{i,j}$, where $(r_{i,j})_{i,j}$ and $(s_{i,j})_{i,j}$  
   are $\mathbb{R}_{\geq 0}^{(n+1)^2}$ tuples. 
\end{rem}
   
   Let
  $g \in S$. By Lemma 2.8, the function 
$g \circ h_L : \mathcal{O}_{L} \to  \mathbb{R}_{\geq 0}$ has the following property.  
 If 
$O_1, O_2 \in \mathcal{O}_{L}$ such that $O_1 \subseteq O_2$ then     
$(g \circ h_L)(O_1) \leq (g \circ h_L)(O_2)$. The functions $g \in S$ hence allow us to quantify the size of elements belonging to $\mathcal{O}_{L}$. 

   We now provide an equivalent form of Theorem $1.2$ which we generalize. We begin by motivating 
the reformulation. The goal of Theorem $1.2$ is to prove the existence of a finite 
simplicial complex $\Upsilon$ contained in $\mathbb{P}^{1,\mathrm{an}}_k$ such that the function $f$ is constant along the fibres of the retraction 
morphism $\psi(e,\_)$. Let us assume that Theorem $1.2$ is true. We define a function 
$M : \Upsilon \to \mathbb{R}_{\geq 0}$ as follows. Let $\gamma \in \Upsilon$ and 
$x \in \mathbb{P}^{1,\mathrm{an}}_k$ be any point for which $\psi(e,x) = \gamma$. 
We set 
$M(\gamma) := f(x)$.  
Let $L/k$ be any complete non-Archimedean real valued algebraically closed field extension such that
$x \in \mathbb{P}^{1,\mathrm{an}}_k(L)$. By definition $M(\gamma)$ is
the   
minimum of $1$ and the 
radius of the largest Berkovich open ball in $\mathbb{P}^{1,\mathrm{an}}_L$ around $x_L$
whose preimage is the disjoint union of homeomorphic copies of itself for the morphism $\phi_L^{\mathrm{an}}$.
The function $M$ is well defined since we assumed Theorem $1.2$ is true.  
Hence a suitable restatement of $1.2$ is 
the following theorem. 
  
\begin{thm}
    Let $\phi : \mathbb{P}^1_k \to \mathbb{P}^1_k$ be a finite morphism. 
    There exists a generalised real interval $I := [i,e]$ and a deformation retraction 
\begin{align*}
  \psi : I \times \mathbb{P}_k^{1,\mathrm{an}} \to \mathbb{P}_k^{1,\mathrm{an}}
\end{align*}
which satisfies the following properties. 
\begin{enumerate}
 \item The image $\psi(e,\mathbb{P}_k^{1,\mathrm{an}})$ of the deformation retraction $\psi$ is a finite simplicial complex. 
Let $\Upsilon$ denote this finite simplicial complex. 
 \item  There exists
a well defined function $M : \Upsilon \to [0,1]$
which satisfies the following conditions. The function $M$ is not identically zero and $log(M)$ is piecewise linear. 
Let $\gamma \in \Upsilon$ such that $M(\gamma) > 0$ and $x \in \psi(e,\_)^{-1}(\gamma)$.
Let $L/k$ be any complete non-Archimedean real valued algebraically closed field extension such that
$x \in \mathbb{P}^{1,\mathrm{an}}_k(L)$. Then the following are true. 
\begin{enumerate}
  \item The
 preimage under the morphism $\phi_L^{\mathrm{an}}$ of the Berkovich open ball 
$B(x_L,M(\gamma)) \subset \mathbb{P}^{1,\mathrm{an}}_L$ around $x_L$ of radius $M(\gamma)$ 
 decomposes into the disjoint union of Berkovich open balls each homeomorphic to
$B(x_L,M(\gamma))$ via the morphism $\phi_L^{\mathrm{an}}$. 
\item Let $O$ be any other Berkovich open ball around $x_L$ whose radius is less than or equal to $1$ such that its 
preimage under the morphism $\phi_L^{\mathrm{an}}$ decomposes into the disjoint union of Berkovich open balls each 
homeomorphic to $O$ via $\phi_L^{\mathrm{an}}$. Then the radius of $O$ must be less than or equal to $M(\gamma)$. 
\end{enumerate}
\end{enumerate}
 \end{thm}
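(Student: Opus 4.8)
The plan is to obtain Theorem 1.4 as a repackaging of Theorem 1.2, whose statement we are free to assume. Applying Theorem 1.2 to $\phi$ produces a finite simplicial complex $\Upsilon \subset \mathbb{P}^{1,\mathrm{an}}_k$, a generalised real interval $I=[i,e]$, and a deformation retraction $\psi : I \times \mathbb{P}^{1,\mathrm{an}}_k \to \mathbb{P}^{1,\mathrm{an}}_k$ with $\psi(e,\mathbb{P}^{1,\mathrm{an}}_k)=\Upsilon$ and $f(x)=f(\psi(e,x))$ for every $x$; keeping this $I$, $\psi$, $\Upsilon$ gives part (1) verbatim. I would then define $M : \Upsilon \to [0,1]$ by $M(\gamma):=f(x)$ for an arbitrary $x \in \psi(e,\_)^{-1}(\gamma)$. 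This is independent of the choice of $x$, since $f$ is constant on the fibres of $\psi(e,\_)$ by Theorem 1.2. As $\psi(e,\_)$ retracts $\mathbb{P}^{1,\mathrm{an}}_k$ onto $\Upsilon$, it fixes $\Upsilon$ pointwise, so $M=f|_\Upsilon$; hence $M$ is valued in $[0,1]$ because $f$ is a minimum of $1$ and a radius, $M$ is not identically zero because $f$ is not and $\psi(e,\_)$ is surjective onto $\Upsilon$, and $\log(M)$ is piecewise linear because this is exactly the last assertion of Theorem 1.2 about $\log_c|f|$ restricted to $\Upsilon$.

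It remains to verify (2)(a) and (2)(b). Fix $\gamma \in \Upsilon$ with $M(\gamma)>0$, a point $x \in \psi(e,\_)^{-1}(\gamma)$, and $L/k$ with $x \in \mathbb{P}^{1,\mathrm{an}}_k(L)$, so that $M(\gamma)=f(x)$. Set $R := \{\, r > 0 : (\phi_L^{\mathrm{an}})^{-1}(B(x_L,r)) \text{ is a disjoint union of homeomorphic copies of } B(x_L,r) \text{ via } \phi_L^{\mathrm{an}}\,\}$ and $\rho := \sup R$, so that by definition $f(x)=\min(1,\rho)$. Part (2)(b) is then immediate: if $O$ is a Berkovich open ball about $x_L$ of radius $r \le 1$ with the decomposition property, then $r \in R$, so $r \le \rho$, and therefore $r \le \min(1,\rho)=M(\gamma)$. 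For (2)(a) it suffices to show $M(\gamma) \in R$, i.e. that $B(x_L,M(\gamma))$ itself has the decomposition property; since $M(\gamma)=\min(1,\rho)\le \rho=\sup R$, this follows once we know that $R$ is a down-set in $(0,\infty)$ and that, in the case $\rho \le 1$, the supremum $\rho$ is attained in $R$.

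I would prove $R$ is downward closed by a sub-ball argument: if $r \in R$ and $0<r'<r$, intersect each homeomorphic copy $B_j$ over $B(x_L,r)$ with $(\phi_L^{\mathrm{an}})^{-1}(B(x_L,r'))$ and restrict $\phi_L^{\mathrm{an}}$; each restriction is a homeomorphism onto $B(x_L,r')$, and each intersection is again a Berkovich open ball because a finite morphism that is a homeomorphism onto a disk pulls each sub-disk back to a sub-disk (checked on the local models, including the purely inseparable ones in characteristic $p$), so $r' \in R$. For attainment when $\rho \le 1$, I would write $B(x_L,\rho)=\bigcup_{r\in R}B(x_L,r)$ and use that $\phi_L^{\mathrm{an}}$ is finite, hence proper and open onto its image, so that the preimages of this nested family of open balls form a nested family whose number of components stabilises and whose component-wise homeomorphisms glue, giving $\rho \in R$. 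The main obstacle is exactly this last step inside (2)(a): transporting $f$ through the retraction of Theorem 1.2 is bookkeeping, but upgrading ``$f(x)$ is the radius of the largest ball with the decomposition property'' to ``the ball of radius $f(x)$ has that property'' requires the sub-ball inheritance and the limiting/gluing argument for nested open balls under the finite map $\phi_L^{\mathrm{an}}$, and it is this that makes (2)(a)–(b) a genuine sharpening rather than a mere restatement of Theorem 1.2.
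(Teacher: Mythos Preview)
Your derivation is the implication $1.2 \Rightarrow 1.6$ that the paper records as one half of Proposition~6.1, and you flesh out that half more carefully than the paper does (which simply says ``it can be checked''). However, this is not the paper's proof of the theorem, and in the paper's logical architecture your argument is circular: Theorem~1.2 is nowhere proved independently. The paper proves the general Theorem~1.7 first, then derives Theorem~1.6 from it in Proposition~6.2 by specialising to $V=V'=\mathbb{P}^1_k$, choosing $g(r_1,r_2,r_3,r_4)=\prod_i r_i$, enlarging the resulting skeleton $\Upsilon'$ to a graph $\Upsilon$ containing the path from $0$ to~$\infty$, and then translating the poly-radius function $M''$ on $\Upsilon$ into an honest radius $M$ via the explicit case-by-case dictionary of Section~2.2.1 (three cases according to whether $|T_1(p)|$ is less than, equal to, or greater than $|T_2(p)|$). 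Theorem~1.2 is only obtained afterwards, from Theorem~1.6 via the other half of Proposition~6.1. So you are assuming what is, in this paper, a corollary of the statement you are proving.

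On the internal details of your reduction: the attainment $\rho\in R$ that worries you in~(2)(a) is already built into Theorem~1.2's assertion that $f$ is ``well defined,'' since the definition there speaks of the radius of the \emph{largest} such ball; so your limiting argument, while correct in spirit, is redundant once you take Theorem~1.2 at face value. The one genuine extra step you do need, and do not quite isolate, is that Theorem~1.6(2)(a) requires the preimage components to be Berkovich open balls, whereas Theorem~1.2 speaks only of ``homeomorphic copies of~$B$''; for $\mathbb{P}^1$ this is the standard fact that connected components of preimages of open balls under finite maps are again open balls, but it is additional input beyond the literal wording of Theorem~1.2.
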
       

   We will show that Theorems 1.2 and 1.6 are equivalent in Section 6. 
We now state a theorem which in Section 6 we will show to be a generalisation of Theorem 1.2. 
Let $\phi : V' \to V$ be a finite surjective
morphism between irreducible, projective varieties of finite type over $k$.
For a complete non-Archimedean real valued algebraically closed field extension $L/k$, let 
 $\phi_L$ denote the morphism 
\begin{align*}
  \phi \times \mathrm{id}_L : V \times_k L \to V \times_k L .
\end{align*}
  As in the case of 
  $\mathbb{P}^1_k$, we write $\phi^{\mathrm{an}} : V'^{\mathrm{an}} \to V^{\mathrm{an}}$ for the induced morphism between the respective analytifications. 
The morphism $\phi_L^{\mathrm{an}}$ is similarly defined and it is to be noted that 
$\phi_L^{\mathrm{an}} = \phi^{\mathrm{an}} \times \mathrm{id}_L^{\mathrm{an}}$.  We fix an embedding
$V \hookrightarrow \mathbb{P}^n_k$ and an affine chart of $\mathbb{P}^{n}$.   
  The theorem will be
stated in terms of the sets $\mathcal{O}_{x_L}$ and the functions $h_L$ and $g$ described above.    
 
\begin{thm}
    Let $\phi : V' \to V$ be a finite surjective morphism between irreducible, projective varieties with $V$ normal. 
 Let $g \in S$. There exists a generalized real interval $I := [i,e]$
  and a deformation retraction 
\begin{align*}
  \psi : I \times V^{\mathrm{an}} \to V^{\mathrm{an}}
\end{align*}
  which satisfies the following properties. 
\begin{enumerate}
  \item The image $\psi(e,V^{\mathrm{an}})$ of the deformation retraction $\psi$ is a finite simplicial complex which we denote $\Upsilon_g$.
   \item There exists a well defined function $M_g : \Upsilon_g \to \mathbb{R}_{≥ 0}$ which satisfies the following conditions. 
The function $M_g$ is not identically zero. Let 
$\gamma \in \Upsilon_g$ be a point on the finite simplicial complex for which $M_g(\gamma) \neq 0$ and 
$x \in \psi(e,\_)^{-1}(\gamma)$. 
Let $L/k$ be any complete non-Archimedean real valued algebraically closed field extension such that 
$x \in V^{\mathrm{an}}(L)$. There exists $W \in (g \circ h_L)^{-1}(M_g(\gamma)) \cap \mathcal{O}_{x_L}$ such that the open set  
 $(\phi^{\mathrm{an}}_L)^{-1}(W \cap V_L^{\mathrm{an}})  \subset V_L'^{\mathrm{an}}$ decomposes into the disjoint union of open sets, each
 homeomorphic to $W \cap V_L^{\mathrm{an}}$ via $\phi_L^{\mathrm{an}}$. Furthermore, let
 $O \in \mathcal{O}_{x_L}$ be such that the preimage of $O \cap V_L^{\mathrm{an}}$ under $\phi_L^{\mathrm{an}}$
decomposes into 
the disjoint union of open sets in $V_L'^{\mathrm{an}}$, each 
 homeomorphic to $O$ via the morphism $\phi_L^{\mathrm{an}}$. Then $(g \circ h_L)(O) \leq M_g(\gamma)$.
 Lastly, the function $\mathrm{log}(M_g)$ is piecewise linear on $\Upsilon_g$. 
 \end{enumerate}
\end{thm}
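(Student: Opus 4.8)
The plan is to distill from $\phi$, $g$ and the fixed data $V \hookrightarrow \mathbb{P}^n_k$ a single auxiliary function on $V^{\mathrm{an}}$ and then invoke the deformation retraction machinery of Hrushovski--Loeser \cite{loes}. For $x \in V^{\mathrm{an}}(L)$ set
\[
F_g(x) := \sup\bigl\{\, (g\circ h_L)(W) : W \in \mathcal{O}_{x_L},\ (\phi^{\mathrm{an}}_L)^{-1}(W\cap V_L^{\mathrm{an}}) \text{ is a disjoint union of copies of } W\cap V_L^{\mathrm{an}} \text{ via } \phi^{\mathrm{an}}_L \,\bigr\},
\]
the supremum of the empty set being $0$. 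Granting (A) that $F_g$ does not depend on the choice of $L$, (B) that $F_g$ is definable (in the pro-definable sense appropriate to the stable completion of \cite{loes}, under the comparison between $V^{\mathrm{an}}$ and that space), and (C) that $F_g$ is not identically zero, Theorem 1.7 follows formally: apply the deformation retraction theorem of \cite{loes}, in the form producing a retraction compatible with a prescribed definable function, to the pair $(V^{\mathrm{an}}, F_g)$; this yields a generalized real interval $I = [i,e]$, a deformation retraction $\psi : I \times V^{\mathrm{an}} \to V^{\mathrm{an}}$ whose image $\Upsilon_g := \psi(e, V^{\mathrm{an}})$ is a finite simplicial complex, and a factorization $F_g = M_g \circ \psi(e,\_)$ for a function $M_g : \Upsilon_g \to \mathbb{R}_{\geq 0}$, which we take as the definition of $M_g$. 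Constancy of $M_g$ on the fibres of $\psi(e,\_)$ is this factorization, and the two assertions about a realizing $W$ and about an arbitrary $O$ are immediate from the definition of $F_g$ as a supremum: the set of admissible poly-radii is a definable subset of $\mathbb{R}^{(n+1)^2}_{>0}$, hence, after base change to a sufficiently saturated $L$, attains its supremum (so such a $W$ exists), while the inequality $(g\circ h_L)(O) \le M_g(\gamma)$ is built into ``$\sup$''. Non-vanishing of $M_g$ follows from (C). Finally $\mathrm{log}(M_g)$ is piecewise linear on $\Upsilon_g$ because $M_g$ is a definable map from a finite simplicial complex in value-group coordinates into $\Gamma_\infty$, and definable functions in a divisible ordered abelian group are piecewise $\mathbb{Q}$-affine — the passage from the multiplicative scale of radii to the additive value group being exactly $\mathrm{log}$.

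For (A) and (C): since $\phi^{\mathrm{an}}_L = \phi^{\mathrm{an}} \times \mathrm{id}^{\mathrm{an}}_L$ and the neighbourhoods of $x_{L'}$ of a given poly-radius are obtained by base change from those of $x_L$ for $L \subset L'$, an ascent/descent argument — using that formation of preimages, of connected components, and the homeomorphism condition are compatible with such base change, which is where the normality of $V$ and finiteness of $\phi$ are used to keep the fibre structure under control — shows the supremum is independent of $L$; this is the same reformulation that turns Theorem 1.2 into Theorem 1.6 in the curve case. For non-vanishing, let $Z \subsetneq V$ be the branch locus of $\phi$, a proper closed subvariety since $V$ is normal and $\phi$ is finite; choosing $x$ over a point of $V \setminus Z$ and a sufficiently small poly-radius, the neighbourhood $W \cap V_L^{\mathrm{an}}$ is a poly-disc-like (in particular contractible) open over which the finite étale cover $\phi^{\mathrm{an}}_L$ is topologically trivial, whence $F_g(x) > 0$.

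The main obstacle is (B), the definability of $F_g$, which amounts to showing that the condition
\[
\mathcal{C}(x,\mathbf{r}): \ \exists\, W \in \mathcal{O}_{x_L}\ \text{with}\ h_L(W)=\mathbf{r}\ \text{and}\ (\phi^{\mathrm{an}}_L)^{-1}(W\cap V_L^{\mathrm{an}})\ \text{splits into disjoint homeomorphic copies of}\ W\cap V_L^{\mathrm{an}}
\]
is definable in $(x, \mathbf{r})$, so that $F_g(x) = g(\sup\{\mathbf{r} : \mathcal{C}(x, \mathbf{r})\})$ is definable by monotonicity of $g \in S$ and definability of suprema of definable families. Working Zariski-locally, $\phi$ is given by a finite algebra $\mathcal{O}_U[t_1,\dots,t_m]/\mathfrak{a}$ with each $t_j$ integral over $\mathcal{O}_U$, and $Z$ is cut out by a discriminant; thus ``$W$ meets $Z^{\mathrm{an}}_L$'' translates into a first-order condition on $\mathbf{r}$ because $Z$ is Zariski-closed and $W$ is defined by value-group inequalities. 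The remaining point is that, for a poly-disc $W$ missing the branch locus, the preimage splits topologically into $\deg\phi$ copies of $W\cap V_L^{\mathrm{an}}$ exactly when that preimage is definably connected into $\deg\phi$ pieces, and that this in turn is controlled by a definable condition on $\mathbf{r}$; this I expect to handle using the definability of (numbers of) connected components of fibres of pro-definable maps from \cite{loes}, after reducing — by slicing the poly-disc with generic lines through $x_L$ and applying Theorem 1.2 on those slices, or by appealing directly to the tameness of $\phi^{\mathrm{an}}_L$ over the étale locus — to the one-dimensional situation where the behaviour of the size function is already understood. Granting this, $\mathcal{C}$ is definable, $F_g$ is a definable function, and the formal argument above completes the proof; the compatibility of the resulting statement with Theorems 1.2 and 1.6 for curves is the bookkeeping deferred to Section 6.
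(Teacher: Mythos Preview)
Your overall architecture matches the paper's: reduce everything to a single ``maximal good radius'' function on $V$, show it is definable, feed it into the Hrushovski--Loeser retraction theorem, and then restrict. The paper carries this out by first proving an analogue for the stable completion $\widehat{V}$ (its Theorem 4.6) and then transferring to $V^{\mathrm{an}}$ (its Section 5). Where your proposal diverges from a complete proof is precisely at the step you flag as the main obstacle, and the workarounds you sketch do not go through.

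First, your proposed reduction of (B) ``by slicing the poly-disc with generic lines through $x_L$ and applying Theorem~1.2'' is circular: in the paper Theorem~1.2 is \emph{deduced} from Theorem~1.7 (via 1.6 and Proposition~6.2), so it is not available here. Second, the alternative ``tameness over the \'etale locus'' route, and your argument for (C), both break in positive characteristic: if $\phi$ has an inseparable factor the branch locus can be all of $V$, yet $\phi^{\mathrm{an}}$ is a homeomorphism on that factor and the splitting condition holds everywhere with $d$ equal to the \emph{separable} degree. The paper handles this by factoring $\phi$ generically as separable followed by purely inseparable (Lemma~4.8) and using only the separable degree throughout. Third, even over the \'etale locus you have not justified that ``preimage has $d$ components'' is equivalent to ``preimage is a disjoint union of homeomorphic copies''; the paper isolates this as Lemma~4.3, and its proof genuinely uses normality of $V$ (so that $\widehat{\phi}$ is open) together with density of points with full fibre (Lemma~4.2).

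What the paper actually does for (B) is the following, and it is not close to what you wrote: it shows (Lemma~4.3/4.4) that the splitting condition is equivalent to the numerical condition $N_{V'}(\mathbf r,x)=d\cdot N_V(\mathbf r,x)$ on numbers of connected components; then (Lemma~4.5) it applies the \emph{uniform} version of the retraction theorem \cite[10.7.1]{loes} to the family of fibres over $R$, obtaining finitely many polytope homeomorphism types and hence a bound on component numbers, gets an ACVF$'$-definable partition of $R$ by component count, and invokes Beth's theorem to descend to an ACVF-definable set $D\subset R$. Only then does $g'_{\inf}(x):=\inf\{g'\circ h(O):h(O)\in D_x\}$ become a $k$-definable function to which Theorem~2.11 applies. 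Finally, the paper spends all of Section~5 showing that the splitting condition for $\widehat{V}$ and for $V^{\mathrm{an}}$ coincide, using the explicit description of connected components via the reduction map (Proposition~3.3) in one direction and the closed surjection $\widehat{V}(L_{\max})\to B_{\mathbf L}(V)$ in the other; your proposal treats this transfer as automatic, and it is not.
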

   
   \begin{rem}
       The second property we require the function $M_g$ to satisfy involves choosing 
  a point $x_L$ over $x$ which is an $L$-point 
  of $V_L^{\mathrm{an}}$ and then requiring that $M_g$ fulfill a condition concerning $x_L$. 
  It may hence seem that the function is dependent on the field $L$ chosen. 
  However, showing that the function $M_g$ is well defined 
 will in particular imply that its value depends solely on $x$ and not on the points $x_L$ for $L/k$.    
  \end{rem}
  
   It is worth mentioning that the result stated above when applied to smooth Berkovich analytic curves 
   over a field of characteristic 
zero bears some similarity with theorems proved in \cite{PP} and \cite{Bal}. In \cite{Bal}, 
the author - F. Baldassarri studies a system of differential equations defined over an analytic domain of the 
affine line over a non-Archimedean real valued field of characteristic zero. 
More precisely, let $k$ be a non-Archimedean field of characteristic zero and 
$X$ be a relatively compact analytic domain of the affine line $\mathbb{A}^{1,\mathrm{an}}_k$.
Let
\begin{align*}
 \Sigma : dy/dT = Gy
\end{align*} 
be a system of linear differential equations where $G$ is a $\mu \times \mu$ matrix of $k$-analytic functions on $X$. If $x ∈ X$
is a $k$-rational point, let $R(x) = R(x,\Sigma)$ denote the radius of the maximal open disk in 
$X$
with center at $x$ on which all solutions of $\Sigma$ converge.
The author shows that the function $R$ is continuous. Also, he illustrates how when $X = \mathbb{A}^{1,\mathrm{an}}_k$   
there exists a finite graph $\Gamma \subset \mathbb{A}^{1,\mathrm{an}}_k$ which \emph{controls} 
 the behaviour of the function $R$. Since $\mathbb{A}^{1,\mathrm{an}}_k$ retracts to any of its finite subgraphs, this means that 
the function $R$ is constant along the fibres of the retraction on $\Gamma$.
In the paper the control by a finite graph is illustrated by an example. 
If one preserves the 
restrictions on the field $k$ and considers the case of
a system of differential equations defined instead over a
 smooth Berkovich curve then a similar result holds true. 
In \cite{PP}, the authors - Poineau and Pulita prove that
associated to a system of differential equations over a smooth Berkovich curve,
 there exists a locally finite graph contained in the curve
 and a retraction of the curve onto it such that the radius of convergence function is constant along the fibres of the
 retraction. 
The result we prove in this paper and the results of Poineau-Pulita and Baldassari 
show that the behaviour of certain functions of interest are controlled by finite simplicial complexes associated to them.  
       
   The goal of this paper is to prove Theorem $1.7$. In Section $2$ we will discuss results and various concepts
from Model Theory and the theory of Berkovich spaces which we will require. We will also
 construct the set $\mathcal{O}_{x_L}$ for any $x \in V^{\mathrm{an}}(L)$ where $L/k$ 
is a complete non-Archimedean real valued algebraically closed field extension.
In Section 4, we prove a version of the main theorem for Hrushovski - Loeser spaces and 
derive 1.7 from this result in Section 5.  
 \\ 
 
 \noindent \emph{Notation} : To prove the main theorem of this paper, we require techniques from Model theory where it is standard to write the value group of 
 a non-Archimedean valued field additively. However, when discussing objects from Berkovich geometry 
 such as affinoid algebras and the reduction morphism it is standard to endow the value group with a  multiplicative structure as 
 it aids in intuition. Instead of resolving this dichotomy in notation, we preserve both notation and eliminate ambiguity by specifying 
 at every instance the structure of the value group, i.e. whether we look at it additively or multiplicatively. 

 \section{Preliminaries}
\subsection{Berkovich spaces}
     Let $V$ be an irreducible, projective 
$k$-variety. By $k$-variety we mean a separated
 $k$-scheme of finite type. Associated functorially to $V$ is a Berkovich analytic space $V^{\mathrm{an}}$. 
We examine this notion in more detail.
We write the value group of the field $k$ multiplicatively 
in this section as well as in 2.2 where we discuss results from Berkovich geometry.
    
   Let $X$ be a scheme which is locally of finite type over $k$. Let $k-an$ denote the category of
$k$-analytic spaces [\cite{berk2}, 1.2.4], $Set$ denote the category of sets and $Sch_{lft/k}$ denote the category of schemes which
are locally 
of finite type over $k$. 
   We define a functor 
\begin{align*}        
   F &:  k-an \to Set \\
     &    Y \mapsto Hom(Y,X)
 \end{align*}
where $Hom(Y,X)$ is the set of morphisms of $k$-ringed spaces. The following theorem defines the space $X^{\mathrm{an}}$.   

\begin{thm} [\cite{berk}, 1.2.4]
   The functor $F$ is representable by a $k$-analytic space $X^{\mathrm{an}}$ and a morphism 
$\pi : X^{\mathrm{an}} \to X$. For any non-Archimedean complete real valued field $K$ extending $k$, there is a 
bijection $X^{\mathrm{an}}(K) \to X(K)$. Furthermore, the map $\pi$ is surjective. 
\end{thm}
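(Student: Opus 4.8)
The plan is to reduce to the affine case and then globalize by gluing. The key preliminary observation is that $F$ is a sheaf: for a $k$-analytic space $Y$ and an open (or, more generally, analytic-domain) cover $\{Y_\alpha\}$, a morphism of $k$-ringed spaces $Y \to X$ is the same datum as a compatible family of morphisms $Y_\alpha \to X$; and dually, if $X = \bigcup_i U_i$ is an affine open cover then a morphism $Y \to X$ is the same as, on a suitable cover of $Y$, a compatible family of morphisms into the various $U_i$. Since $k$-analytic spaces can themselves be glued along open immersions, it therefore suffices to (i) construct $U^{\mathrm{an}}$ for $U$ affine of finite type over $k$, (ii) verify the universal property in that case, and (iii) check that an open immersion of affine schemes induces an open immersion of analytifications, so that the local pieces glue.

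For step (i), fix a presentation $U = \mathrm{Spec}(k[T_1,\dots,T_n]/I)$ and first build the analytic affine space $\mathbb{A}^{n,\mathrm{an}}_k$: as a set it is the collection of multiplicative seminorms on $k[T_1,\dots,T_n]$ extending $|\cdot|_k$, equipped with the coarsest topology making $x \mapsto |f(x)|$ continuous for every $f$; it is the increasing union of the affinoid polydiscs $\mathcal{M}(k\{r^{-1}\mathbf{T}\})$ over $r \in \mathbb{R}_{>0}^n$, hence a good $k$-analytic space, and it represents the functor $Y \mapsto \mathcal{O}_Y(Y)^n$. One then sets $U^{\mathrm{an}} := \{\, x \in \mathbb{A}^{n,\mathrm{an}}_k : f(x) = 0 \text{ for all } f \in I \,\}$, which is Zariski-closed in each polydisc and so carries a canonical closed-analytic-subspace structure. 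For step (ii), a morphism $Y \to U = \mathrm{Spec}(A)$ of locally $k$-ringed spaces is, by the $\mathrm{Spec}$–global-sections adjunction, the same as a $k$-algebra map $A \to \mathcal{O}_Y(Y)$, hence an $n$-tuple of global analytic functions on $Y$ annihilating $I$, hence a morphism $Y \to \mathbb{A}^{n,\mathrm{an}}_k$ factoring through $U^{\mathrm{an}}$; this is exactly $\mathrm{Hom}(Y, U^{\mathrm{an}})$, and independence of the chosen presentation follows formally from the universal property. For step (iii), a basic open $D(f) \subset U$ analytifies to the open subspace $\{x : |f(x)| \neq 0\} \subset U^{\mathrm{an}}$; assembling the $U_i^{\mathrm{an}}$ along these opens yields $X^{\mathrm{an}}$ together with the morphism $\pi : X^{\mathrm{an}} \to X$, which on an affine chart $U = \mathrm{Spec}(A)$ sends a seminorm to its kernel $\mathfrak{p} \in \mathrm{Spec}(A)$.

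For the identification $X^{\mathrm{an}}(K) \cong X(K)$, take $Y = \mathcal{M}(K)$ in the universal property: $X^{\mathrm{an}}(K) = \mathrm{Hom}(\mathcal{M}(K), X^{\mathrm{an}}) = \mathrm{Hom}_{k\text{-ringed}}(\mathcal{M}(K), X)$, and since $\mathcal{M}(K)$ is a one-point space whose ring of functions is the field $K$, a morphism from it to the scheme $X$ is precisely a $k$-morphism $\mathrm{Spec}(K) \to X$, i.e.\ an element of $X(K)$; one checks directly that the resulting bijection is $\pi$ on $K$-points. For surjectivity of $\pi$, work in an affine chart: a point of $X^{\mathrm{an}}$ above a scheme point $x$ is a multiplicative seminorm on the coordinate ring with kernel $\mathfrak{p}_x$, equivalently an absolute value on the residue field $\kappa(x)$ extending $|\cdot|_k$. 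As $\kappa(x)$ is finitely generated over $k$, such an extension always exists --- choose a transcendence basis, take the corresponding Gauss norm, then extend across the remaining finite extension --- so $\pi$ is onto.

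The part requiring real work here is not the formal bookkeeping but the Berkovich-geometric input that must be in place beforehand: that the seminorm construction genuinely assembles into a good $k$-analytic space $\mathbb{A}^{n,\mathrm{an}}_k$ representing $Y \mapsto \mathcal{O}_Y(Y)^n$, that Zariski-closed subsets of affinoids carry a canonical analytic structure with the expected mapping property, and that these structures are compatible with the gluing along the opens $\{|f| \neq 0\}$. Granting those foundations, the rest is diagram-chasing with the $\mathrm{Spec}$–global-sections adjunction and the sheaf property of $F$.
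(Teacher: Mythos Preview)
The paper does not give its own proof of this statement: it is Theorem 2.1, quoted verbatim with the citation [\cite{berk}, 1.2.4] and no argument supplied. So there is nothing in the paper to compare your proposal against.

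That said, your sketch is a correct outline of the standard construction, and it is essentially the argument Berkovich gives in \cite{berk}, Section 3.4 (note: the paper's citation ``1.2.4'' appears to be a typo for 3.4.1). The reduction to the affine case, the realization of $\mathbb{A}^{n,\mathrm{an}}_k$ as an increasing union of closed polydiscs, the identification of $U^{\mathrm{an}}$ as a Zariski-closed analytic subspace, the verification of the universal property via the $\mathrm{Spec}$--global-sections adjunction, and the gluing along $\{|f|\neq 0\}$ are all as in Berkovich. Your treatment of $X^{\mathrm{an}}(K)\cong X(K)$ by plugging $Y=\mathcal{M}(K)$ into the universal property, and of the surjectivity of $\pi$ by extending the valuation across a transcendence basis and then a finite extension, are likewise the standard arguments. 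The only place to be slightly more careful is the assertion that $\mathbb{A}^{n,\mathrm{an}}_k$ represents $Y\mapsto \mathcal{O}_Y(Y)^n$: for general (non-good, non-strict) $k$-analytic spaces one has to phrase this locally on an affinoid net, but you acknowledge this in your final paragraph.
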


   The associated $k$-analytic space $X^{\mathrm{an}}$ is \emph{good} by which we mean that for every point $x \in X^{\mathrm{an}}$ there exists a neighbourhood of $x$
isomorphic
to an affinoid space. Theorem 2.1 implies the existence of a well defined functor
\begin{align*}
   ()^{\mathrm{an}} :& Sch_{lft/k} \to  \mbox{\emph{good} } k-an \\
           &     X        \mapsto  X^{\mathrm{an}}.  
\end{align*}
     
    As a set $X^{\mathrm{an}}$ is the collection of pairs $\{(x,\eta)\}$ where $x$ is a scheme 
theoretic point of $X$ and $\eta$ is a rank one valuation on the residue field $k(x)$ which extends the 
valuation on the field $k$. We endow this set with a topology as follows. A pre-basic open set 
is a set
 of the form $\{(x,\eta) \in U^{\mathrm{an}}||f(\eta)| \in W\}$, where $U$ is an open subvariety of $X$ with $f \in O_X(U)$, 
$W$ is an open subspace of $\mathbb{R}_\geq 0$ and $|f(\eta)|$ is the image of $f$ in the residue field $k(x)$ evaluated at 
$\eta$. A basic open set is any set which is equal to the intersection of a finite number of pre-basic open sets.    

   The description above implies that
 every point of the analytification $X^{\mathrm{an}}$ is associated to a complete non-Archimedean algebraically closed real valued field
extension $L/k$ and a $k$-morphism $\mathrm{Spec}(L) \to X$.
We explore this idea further. Let
$x_1 : \mathrm{Spec}(L_1) \to X$ and 
$x_2 : \mathrm{Spec}(L_2) \to X$ be a pair of $k$-morphisms where 
$L_1/k$ and $L_2/k$ are complete non-Archimedean real valued algebraically closed
field extensions. We set $x_1 \sim x_2$ if there exists a  
complete non-Archimedean real valued algebraically closed field extension $M$ of both 
$L_1$ and $L_2$ and a $k$-morphism $x_3: \mathrm{Spec}(M) \to X$ such that 
\begin{align*}
   x_3 = x_i \circ (\mathrm{Spec}(M) \to \mathrm{Spec}(L_i))  
\end{align*}
   for $i =1,2$ and where $\mathrm{Spec}(M) \to \mathrm{Spec}(L_i)$ is induced by the inclusion
$L_i \hookrightarrow M$.  

  We have the following equality. 
\begin{align}
  X^{\mathrm{an}} = \{\bigcup_{L/k} Hom_{\mathrm{Spec}(k)}(\mathrm{Spec}(L),X)\}/\sim .
\end{align}
  The union in the above equation is taken over all complete non-Archimedean real valued algebraically closed field extensions $L$ of $k$. 
Using (1), the morphism $\pi : X^{\mathrm{an}} \to X$ which appears in the statement of Theorem 2.1 can be 
defined explicitly as follows. 
\begin{align*}
  \pi &: X^{\mathrm{an}} \to X \\
       & [x:\mathrm{Spec}(L) \to X] \mapsto x:\mathrm{Spec}(L) \to X    
 \end{align*}
    where $[x:\mathrm{Spec}(L) \to X)]$ denotes the equivalence class of the point 
$x$ for the relation $\sim$ defined above.    
 \begin{es}
  \emph{ If $V = \mathbb{A}^n_k$ then as a set $V^{\mathrm{an}}$ is simply 
the collection of multiplicative seminorms on the polynomial ring $k[T_1,\ldots,T_n]$ which restrict to the given valuation on  
$k$. We use $\mathbb{A}^{n,\mathrm{an}}_k$ to denote the analytification of affine $n$-space over $k$. Similarly,  
as a set the analytification of a Zariski closed subset of $\mathbb{A}^n_k$ whose ring of 
regular functions is $k[T_1,\ldots,T_n]/I$ is the collection of multiplicative seminorms on 
$k[T_1,\ldots,T_n]/I$ which restrict to the given valuation on $k$. Multiplicative seminorms are defined in section 2.1.1}  
\end{es}
   
    We now discuss the notion of the reduction map associated to a $k$-affinoid space.
 The reduction map and the $\tilde{k}$-scheme associated to an affinoid space will be 
of use when discussing  
 formal covers in the following subsection. 

\subsubsection{The Reduction Morphism}
    Let $\mathcal{A}$ be a commutative ring with unity. A non-Archimedean seminorm on $\mathcal{A}$ is a function 
$|.| : \mathcal{A} \to \mathbb{R}_{\geq 0}$ such that $|0| = 0$, $|1| =1$
and if $f,g \in \mathcal{A}$ then
$|f - g| \leq \mathrm{max}\{|f|,|g|\}$ and $|f.g| \leq |f|.|g|$. A seminorm is called a norm if for any 
$a \in \mathcal{A}$ for which $|a| = 0$ then $a =0$. If $|.|$ is a seminorm on $\mathcal{A}$ such that 
$|f.g| = |f|.|g|$ then it is called a multiplicative seminorm. A norm which is multiplicative is called a valuation.      
     
  Let $|.|$ be a norm on the ring $\mathcal{A}$. The pair $(\mathcal{A},|.|)$ is a Banach ring if 
$\mathcal{A}$ is complete for the norm $|.|$. When there is no ambiguity concerning the norm on 
$\mathcal{A}$ we suppress notation and refer to the Banach ring as just $\mathcal{A}$.  
  A bounded homomorphism
of Banach rings $\phi : \mathcal{A} \to \mathcal{B}$ is a homomorphism of rings with the additional property that there exists a 
constant
$C \in \mathbb{R}$ such that for every $a \in \mathcal{A}$, we have the inequality $|\phi(a)|_\mathcal{B} \leq C.|a|_\mathcal{A}$. 
 
    Let $\mathcal{A}$ be a Banach ring. We define its spectrum $\mathcal{M}(\mathcal{A})$ to
be the set of bounded multiplicative seminorms on $\mathcal{A}$ provided with the weakest topology such
that for every $f \in \mathcal{A}$ the
 real valued function $f : \mathcal{M}(\mathcal{A}) \to \mathbb{R}_{\geq 0}$ defined by $x \mapsto |f(x)|$
 is continuous [\cite{berk}, 1.2].
A bounded homomorphism of commutative Banach rings $\phi : \mathcal{A} \to \mathcal{B}$ will induce 
a continuous morphism $\mathcal{M}(\mathcal{B}) \to \mathcal{M}(\mathcal{A})$. 

    The spectral norm of $\mathcal{A}$ is denoted $\rho$ and defined as follows. 
    Let $f \in \mathcal{A}$. We set
    $\rho(f) := \mathrm{sup}_{x \in \mathcal{M}(\mathcal{A})} \{|f(x)|\}$.  
 
   If $x \in \mathcal{M}(\mathcal{A})$ and $f \in \mathcal{A}$ then
we write $|f(x)|$ for the value of $f$ at $x$. 
It can be checked that $\mathrm{Ker}(x) := \{f \in \mathcal{A} | |f(x)| = 0 \}$ is a prime ideal.  
Let $\mathcal{H}(x)$ denote the completion of the field of fractions 
of $\mathcal{A}/(\mathrm{Ker}(x))$ with respect to the norm induced on this quotient by $x$. 
            
   The set $\mathcal{A}^\circ := \{x \in \mathcal{A}| \rho(x) \leq 1\}$ is a complete sub ring  
of $\mathcal{A}$ in which $\mathcal{A}^{\circ \circ} := \{x \in \mathcal{A}| \rho(x) < 1\}$ is an ideal. We will set
$\tilde{\mathcal{A}} := \mathcal{A}^\circ/ \mathcal{A}^{\circ \circ}$. A bounded homomorphism of
Banach rings $\mathcal{A} \to \mathcal{B}$ will induce a morphism $\tilde{\mathcal{A}} \to 
\tilde{\mathcal{B}}$. Note that if $\mathcal{A}$ is a field then $\tilde{\mathcal{A}}$ is a field as well. 

  For every $x \in \mathcal{M}(\mathcal{A})$, we have the map $\mathcal{A} \to \mathcal{A}/(\mathrm{Ker}(x))$ which is 
bounded and hence induces a bounded homomorphism of Banach rings $\mathcal{A} \to \mathcal{H}(x)$. This in turn defines a 
morphism $\tilde{A} \to \widetilde{\mathcal{H}(x)}$. We have in fact defined a map 
$\mathcal{M}(\mathcal{A}) \to \mathrm{Spec}(\tilde{\mathcal{A}})$
which we call the reduction map and denote it by $\pi$. Explicitly stated,
\begin{align*}
    \pi &: \mathcal{M}(\mathcal{A}) \to \mathrm{Spec}(\tilde{\mathcal{A}}) \\
        &      x       \mapsto \mathrm{Ker}(\tilde{\mathcal{A}} \to \widetilde{\mathcal{H}(x)}).  
 \end{align*}
   
  For a $k$-affinoid space $X := \mathcal{M}(\mathcal{B})$ where $\mathcal{B}$ is a $k$-affinoid algebra, we will use 
$\tilde{X}$ to
 denote the 
$\tilde{k}$-scheme $\mathrm{Spec}(\tilde{\mathcal{B}})$. Concerning the reduction map in the case of 
strict $k$-affinoid algebras [\cite{berk}, 2.1] we have the following proposition.
\begin{prop} 

[\cite{berk}, 2.4.4] [\cite{berk3}, 2.3.6].
  Let $\mathcal{A}$ be a strict $k$-affinoid algebra. Set $X := \mathcal{M}(\mathcal{A})$,
   $\tilde{X} := \mathrm{Spec}(\tilde{\mathcal{A}})$,
 and 
  let $\tilde{X}_{gen}$ be the set of generic points of the irreducible components of the scheme $\tilde{X}$. 
With this notation, the following statements are true. 
\begin{enumerate}
 \item The reduction map $\pi : X \to \tilde{X}$ is surjective.
 \item For any $\tilde{x} \in \tilde{X}_{gen}$, there exists a unique point $x \in X$ with $\pi(x) = \tilde{x}$. If  
$\rho(\mathcal{A}) = |k|$ then there is an isomorphism 
$\tilde{k}(\tilde{x}) \simeq \widetilde{\mathcal{H}(x)}$.
\item The set $\pi^{-1}(\tilde{X}_{gen})$ is the Shilov boundary of $X$.
\item The pre-image of an open (resp. closed) subset of $\tilde{X}$ is closed (resp. open) under the morphism
 $\pi$.
\end{enumerate}
\end{prop}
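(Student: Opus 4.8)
The statement is classical — the complete arguments are in \cite{berk}, 2.4.4 and \cite{berk3}, 2.3.6 — and the plan is to reduce everything to the case of the Tate algebra $k\{T_1,\dots,T_d\}$ by Noether normalization, using two external inputs: the lying-over property of integral ring extensions, and the non-Archimedean Abhyankar inequality bounding $\mathrm{trdeg}_{\tilde k}\widetilde{\mathcal H(x)}+\mathrm{rk}(|\mathcal H(x)^*|/|k^*|)$ by the local dimension of $X$ at $x$. I would treat the four parts in the order $(4),(1),(2),(3)$. Part $(4)$ I would do first, since it is purely formal: a closed subset of $\tilde X$ is $V(\tilde I)$ with $\tilde I=(\tilde f_1,\dots,\tilde f_m)$ by Noetherianity of $\tilde{\mathcal A}$, each $\tilde f_j$ being the image of some $f_j\in\mathcal A^\circ$; unwinding the definition of $\pi$, one has $\pi(x)\in V(\tilde f_j)$ exactly when the image of $f_j(x)$ in $\widetilde{\mathcal H(x)}$ vanishes, i.e. exactly when $|f_j(x)|<1$, so that $\pi^{-1}(V(\tilde I))=\bigcap_{j}\{x:|f_j(x)|<1\}$ is open in $X$, and dually $\pi^{-1}(\tilde X\setminus V(\tilde I))=\bigcup_{j}\{x:|f_j(x)|=1\}$ is closed.

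For $(1)$ I would pick, by Noether normalization, a finite injective homomorphism $k\{T_1,\dots,T_d\}\hookrightarrow\mathcal A$ with $d=\dim\mathcal A$; it induces a finite injective homomorphism $\tilde k[T_1,\dots,T_d]\hookrightarrow\tilde{\mathcal A}$ and a commutative square comparing the reduction map of $X$ with that of the closed polydisc $E^d:=\mathcal M(k\{T_1,\dots,T_d\})$. Since $\mathcal M(\mathcal A)\to E^d$ is surjective and $\mathrm{Spec}\,\tilde{\mathcal A}\to\mathbb A^d_{\tilde k}$ is surjective by lying-over, it suffices to prove $\pi$ surjective for $E^d$ and, fibrewise over each $y\in E^d$, surjective from the finite $\mathcal H(y)$-affinoid fibre onto the fibre of $\mathrm{Spec}\,\tilde{\mathcal A}\to\mathbb A^d_{\tilde k}$ — again a finite-morphism statement, now over a single point. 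For $E^d$ itself, given a prime $\tilde{\mathfrak p}\subseteq\tilde k[T_1,\dots,T_d]$ one constructs a bounded multiplicative seminorm reducing to $\tilde{\mathfrak p}$ by combining the Gauss norm along $\tilde{\mathfrak p}$ with a valuation on $\mathrm{Frac}(\tilde k[T_1,\dots,T_d]/\tilde{\mathfrak p})$ that is trivial on $\tilde k$; here I expect to quote Berkovich's construction rather than reproduce it.

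For $(2)$ and $(3)$, let $\tilde x\in\tilde X_{gen}$ be the generic point of a component of dimension $d_0$ and $x\in X$ with $\pi(x)=\tilde x$. Then $\tilde{\mathcal A}\to\widetilde{\mathcal H(x)}$ has kernel $\tilde x$, so $\tilde k(\tilde x)\subseteq\widetilde{\mathcal H(x)}$ and $\mathrm{trdeg}_{\tilde k}\widetilde{\mathcal H(x)}\ge d_0$; since the reduction map preserves local dimension, $\dim_x X=d_0$, and Abhyankar's inequality forces $\mathrm{trdeg}_{\tilde k}\widetilde{\mathcal H(x)}+\mathrm{rk}(|\mathcal H(x)^*|/|k^*|)\le d_0$. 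Hence both quantities are extremal: $|\mathcal H(x)^*|$ lies in the divisible hull of $|k^*|$ — equal to $|k^*|$ under the running hypothesis $\rho(\mathcal A)=|k|$ — and $\widetilde{\mathcal H(x)}$ is algebraic over $\tilde k(\tilde x)$, in fact equal to it. Uniqueness of such an $x$ inside the tube $\pi^{-1}(\tilde x)$ I would derive from the fact that its reduction map on $\mathcal A^\circ$ is the canonical composite $\mathcal A^\circ\to\tilde{\mathcal A}\to\tilde{\mathcal A}/\tilde x\subseteq\tilde k(\tilde x)$, independent of $x$, which together with $|\mathcal H(x)^*|=|k^*|$ pins down the seminorm; this proves $(2)$ and shows $\pi^{-1}(\tilde X_{gen})$ is finite, with one point $x_i$ per irreducible component of $\tilde X$. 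For $(3)$: after replacing $\mathcal A$ by $\mathcal A_{\mathrm{red}}$ and rescaling I may test the Shilov property on an $f$ with $\rho(f)=1$; then $\tilde f\ne 0$ in $\tilde{\mathcal A}$, hence nonzero at some $\tilde x_i\in\tilde X_{gen}$, so $|f(x_i)|=1=\rho(f)$ and $\pi^{-1}(\tilde X_{gen})$ is a boundary; for minimality, choose for each component $g_i\in\mathcal A^\circ$ whose reduction is nonzero on that component and zero on the others, so that $\rho(g_i)=|g_i(x_i)|=1$ while $|g_i|<1$ on the tubes over the remaining components, and the rigidity from $(2)$ forces each $x_i$ to lie in every closed boundary.

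The load-bearing, non-formal inputs — and hence the steps I expect to be the main obstacle — are the construction of a seminorm reducing to an arbitrary (possibly non-closed) prime of the polydisc, and above all the uniqueness in $(2)$ with its consequence, the minimality of the Shilov boundary in $(3)$; these rest on the finer structure theory of strictly affinoid algebras (Abhyankar's inequality, the behaviour of $\widetilde{\mathcal H(x)}$, reductions of affinoid subdomains). In the write-up I would therefore either invoke \cite{berk}, 2.4.4 and \cite{berk3}, 2.3.6 directly or carry out the Noether-normalization reduction above and cite the Tate-algebra computations.
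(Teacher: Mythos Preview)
The paper does not prove this proposition at all: it is stated with the attributions [\cite{berk}, 2.4.4] and [\cite{berk3}, 2.3.6] and then used as a black box, with no argument given. Your sketch is a reasonable outline of the Berkovich proof and even anticipates this, since you explicitly list ``invoke \cite{berk}, 2.4.4 and \cite{berk3}, 2.3.6 directly'' as one of your two write-up options; that is precisely, and entirely, what the paper does.
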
      

\subsubsection{Formal Covers}
    In this section we define
for any projective $k$-variety $V$, the $\tilde{k}$-scheme $\tilde{V}$ and the reduction map
$V^{\mathrm{an}} \to \tilde{V}$. One way of doing so  
would be to use an affinoid covering of the space $V^{\mathrm{an}}$ 
and glue the $\tilde{k}$-schemes arising from each element of the covering. 
However in order for the gluing to make sense, we must be restrictive in our choice of covering. It is to this end that we introduce the
notion of a formal cover of a separated $k$-analytic space.      

\begin{defi} [\cite{berk}, Section 4.3]
   \emph{An affinoid domain $W$ in a $k$-affinoid space $X$ is said to be} formal \emph{if the induced morphism 
$\tilde{W} \to \tilde{X}$
 is an open immersion.} 
\end{defi}

   The definition that follows is stated for a separated $k$-analytic space. Analytic spaces are constructed 
   and studied in detail in \cite{berk2}. 
The analytification of a $k$-variety is an example of a separated $k$-analytic space. We will be interested only in
this case. 

\begin{defi}
  \emph{Let $X$ be a separated $k$-analytic space. An affinoid covering $\{W_i\}$ is} formal \emph{if
the $W_i$ are strict $k$-affinoid spaces and
for any $W_i, W_j$ belonging to the cover, $W_i \cap W_j$ is a formal affinoid subdomain of both $W_i$ and $W_j$.} 
\end{defi}
 
 Let $X$ be a separated $k$-analytic space provided with a formal cover $\mathfrak{W} := \{W_i\}$.
 Gluing the $\tilde{W}_i$ defines a
$\tilde{k}$-scheme $\tilde{X}_\mathfrak{W}$ which is reduced and of finite type. Furthermore, the reduction
map $\pi : W_i \to \tilde{W}_i$ for each element of the covering extends 
to a map $\pi : X \to \tilde{X}_\mathfrak{W}$.

   Our notation $\tilde{X}_\mathfrak{W}$ was to specify the importance of the choice of cover 
involved in defining the scheme $\tilde{X}$. In what follows we will suppress the sub script and simply write
$\tilde{X}$. 
 
  The following proposition ensures that if $X$ is a projective $k$-variety of finite type then it always admits a formal 
cover. We first prove the theorem when $X = \mathbb{P}^{n,\mathrm{an}}_k$ for some $n$. The result in this case is obtained by 
exploiting the standard chart associated to $\mathbb{P}^{n}_k$. 

\begin{prop}\label{formal}
   Let $V$ be a projective $k$-variety.
Let $L/k$ be a complete non-Archimedean real valued field extension. 
 There exists a finite formal cover $\mathfrak{W}_L$ of 
$V^{\mathrm{an}}_L$ such that 
the collection $\{\mathfrak{W}_L\}_L$ has the following property. 
Let $L_1/k$ and $L_2/k$ be complete non-Archimedean real valued field extensions such that 
$L_1 \subset L_2$. If $\mathfrak{W}_{L_1} = \{D_i\}_i$ then $\mathfrak{W}_{L_2} = \{D_i {\times}_{L_1} L_2\}_i$.        
\end{prop}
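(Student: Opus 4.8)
The plan is to reduce everything to the projective space $\mathbb{P}^n_k$ containing $V$ and to build the cover purely out of the chosen embedding over $k$, so that its behaviour under field extension becomes automatic. Fix a closed immersion $V \hookrightarrow \mathbb{P}^n_k$ and identify $V$ with its image. For a complete non-Archimedean real valued field extension $L/k$, the standard charts $U_0,\ldots,U_n$ of $\mathbb{P}^n_L$ produce the affinoid sets
\begin{align*}
  A_{i,L} := \{x \in \mathbb{P}^{n,\mathrm{an}}_L : |(T_l/T_i)(x)| \leq 1 \text{ for all } l\},
\end{align*}
each isomorphic to the closed unit polydisc $\mathcal{M}(L\{s_1,\ldots,s_n\})$ via $s_l \mapsto T_l/T_i$ (dropping the index $l=i$); by [\cite{berk}, Section 4.3] these $n+1$ charts form a finite formal cover of $\mathbb{P}^{n,\mathrm{an}}_L$ whose associated $\tilde{L}$-scheme is $\mathbb{P}^n_{\tilde{L}}$, with the reduction map compatible with the $\mathrm{Proj}$ structure.

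First I would set $D_{i,L} := A_{i,L} \cap V^{\mathrm{an}}_L$. The trace $V \cap U_i$ of $V$ on the $i$-th chart is a closed subscheme of $U_i \cong \mathbb{A}^n$, cut out by an ideal $I_i \subset k[s_1,\ldots,s_n]$ with coefficients in $k$; hence $D_{i,L} = \mathcal{M}(\mathcal{B}_{i,L})$ with $\mathcal{B}_{i,L} := L\{s_1,\ldots,s_n\}/I_i L\{s_1,\ldots,s_n\}$ a strict $L$-affinoid algebra, and $D_{i,L} \hookrightarrow A_{i,L}$ is a closed immersion of strict affinoid spaces. Since the $A_{i,L}$ cover $\mathbb{P}^{n,\mathrm{an}}_L$, the $D_{i,L}$ cover $V^{\mathrm{an}}_L$, and I would take $\mathfrak{W}_L := \{D_{i,L}\}_i$.

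It then remains to verify that $\mathfrak{W}_L$ is a formal cover, i.e. that $D_{i,L} \cap D_{j,L}$ is a formal affinoid subdomain of $D_{i,L}$ (and symmetrically of $D_{j,L}$). Working in $A_{i,L}$ with coordinates $s_l = T_l/T_i$, a short computation identifies $A_{i,L} \cap A_{j,L}$ with the locus $\{x \in A_{i,L} : |s_j(x)| = 1\}$, which is the Laurent domain $\mathcal{M}(L\{s_1,\ldots,s_n\}\langle s_j^{-1}\rangle)$; since $s_j$ is power-bounded its reduction is $D(\tilde{s}_j) \subset \mathbb{A}^n_{\tilde{L}}$, an open immersion into $\tilde{A}_{i,L}$. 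Passing to $V$ amounts to forming the same Laurent localization of $\mathcal{B}_{i,L}$, so that $\widetilde{D_{i,L} \cap D_{j,L}} = D(\widetilde{s_j|_{D_{i,L}}})$ is an open subscheme of $\tilde{D}_{i,L}$. This is the step I expect to cost the most care: one must use that the reduction functor on strict affinoid algebras is compatible with admissible quotients (surjectivity of $\tilde{\mathcal{A}} \to \tilde{\mathcal{B}}$) and with Laurent localizations, as recorded in Proposition 2.3 and [\cite{berk}, 2.4], in order to conclude that $\widetilde{D_{i,L}\cap D_{j,L}}$ is genuinely open in $\tilde{D}_{i,L}$ rather than merely mapping into it.

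Finally, the compatibility under $L_1 \subset L_2$ is forced by the construction. Because the $I_i$ have coefficients in $k \subset L_1$, we get $\mathcal{B}_{i,L_2} = L_2\{s\}/I_i L_2\{s\} = \mathcal{B}_{i,L_1}\,\hat{\otimes}_{L_1} L_2$, hence $D_{i,L_2} = D_{i,L_1}\times_{L_1}L_2$, and likewise the charts $A_{i,L_2}$ are the base changes of the $A_{i,L_1}$; moreover base change preserves the Laurent condition $|s_j|=1$ and open immersions of reductions, so $\mathfrak{W}_{L_2}$ is again a formal cover. Thus if $\mathfrak{W}_{L_1} = \{D_{i,L_1}\}_i$ then $\mathfrak{W}_{L_2} = \{D_{i,L_1}\times_{L_1} L_2\}_i$, which is the asserted property.
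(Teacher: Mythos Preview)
Your proposal is correct and follows essentially the same route as the paper: construct the standard affinoid charts $A_{i,L}$ of $\mathbb{P}^{n,\mathrm{an}}_L$, intersect with $V^{\mathrm{an}}_L$, identify $D_{i,L}\cap D_{j,L}$ with the Laurent locus $|s_j|=1$ inside $D_{i,L}$, and read off formality from the compatibility of reduction with Laurent localizations and quotients; compatibility under $L_1\subset L_2$ then follows because the defining ideals $I_i$ live over $k$. The only cosmetic difference is that the paper cites [\cite{BGR}, Proposition 7.2.6/3] for the key step (and explicitly handles the empty-intersection case by noting that nonemptiness forces $|T_j/T_i(\rho)|=1$ for the spectral norm $\rho$), whereas you appeal to Proposition~2.3 and [\cite{berk}, 2.4]; you may want to make the nonemptiness caveat explicit so that $\widetilde{s_j}$ is guaranteed to be a nonzero element of $\tilde{\mathcal{B}}_{i,L}$.
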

\begin{proof}
 In what follows we will explicitly construct the formal cover $\mathfrak{W}_k$.
It can be verified that if $L/k$ is a complete
non-Archimedean real valued  field extension then extending scalars for each element of the cover $\mathfrak{W}_k$ by 
$L$ defines a formal cover $\mathfrak{W}_L$.

  We begin by considering the case when 
$V = \mathbb{P}^n_k$ for some $n \in \mathbb{N}$. The analytification of the projective space
 $\mathbb{P}^{n,\mathrm{an}}_k$
 can be described in a fashion reminiscent of the \textasciigrave Proj\textasciiacute construction in the
theory of schemes [\cite{Liu}, 2.3.3]. 

 Consider the $k$-algebra $k[T_1,\ldots,T_{n+1}]$. Let $S$ denote the set of all multiplicative
seminorms on this algebra which restrict to the valuation of the field $k$ such that 
if $x \in S$ then $|T_i(x)| \neq 0$ for some $i$. We define an equivalence relation $\sim$
 on $S$ as follows.
\begin{align*}
x \sim y \Longleftrightarrow  \mbox{ There exists } c \in \mathbb{R}_{>0} 
\mbox{ such that for any homogenous } \\ f \in k[T_1,..T_{n+1}],
 |f(x)| = c^{deg(f)}|f(y)| .
\end{align*}
    The set $S/\sim$ can be endowed with a topology in a natural fashion [\cite{baker}, 2.2] so that it becomes a compact, Hausdorff
topological space. We proceed further and give $S/\sim$ the structure of a $k$-analytic space. Let
\begin{align*}
 A_j := \{ x \in S | |T_r(x)| \leq |T_j(x)| \mbox{ for every } 1 ≤ r ≤ n+1 \}. 
\end{align*}
   Observe that if $a \in A_j$ and $a \sim b$ then $b \in A_j$. We will
abuse notation and denote $A_j/\sim$ by $A_j$ as well. It follows that $S/\sim = \cup_j A_j$.
 Furthermore, for any $1 ≤ j ≤ n+1$,
$A_j$ is in bijection with the set of multiplicative seminorms on 
$k[T_1/T_j,.,T_{n+1}/T_j]$ which when evaluated at $(T_i/T_j)$ for any possible choice of $i$
 is less than or equal to $1$. But
 this is exactly the set of all multiplicative
seminorms on the affinoid algebra $\mathcal{B}_j := k\{T_1/T_j,.,T_{n+1}/T_j\}$ which restrict to the given valuation on $k$. In fact we have a 
homeomorphism
\begin{align*}
 \delta_j : A_j \to \mathcal{M}(\mathcal{B}_j).
\end{align*}
    Consequently, the collection $\{ \cap_{r \in Q} A_{r} \}_{Q \in \mathfrak{P}}$ where $\mathfrak{P}$ is the set of all subsets of the 
    set $\{1,\ldots,n+1\}$, forms a
net [\cite{berk2}, 1.1] of compact sets on the space $S/\sim$.
 If $i \neq j$ then
 $\delta_j$ restricts to an isomorphism between 
  $A_j \cap A_i$ and the affinoid domain
\begin{align*}
 \mathcal{M}(\mathcal{B}_{ji}) := k\{T_1/T_j,..T_i/T_j,(T_i/T_j)^{-1},.,T_{n+1}/T_j\}.  
\end{align*}
   We use $\delta_{ij}$ to denote this isomorphism. Note that 
$\mathcal{B}_{ji} = \mathcal{B}_{ij}$ and the isomorphisms $\delta_{ij}$ and $\delta_{ji}$ are the same. 

    Similarly let $Q \in \mathfrak{P}$. The space $\cap_{r \in Q \cup \{i\}} A_r$ is homeomorphic 
    to the affinoid space 
 \begin{align*}
   \mathcal{M}(\mathcal{B}_{iQ}) := \mathcal{M}(k\{T_j/T_i,(T_s/T_i)^{-1}\})_{s \in Q, j \in \{1,..,n+1\}}     
 \end{align*}
    via the restriction of any $\delta_s$ where $s \in Q \cup i$. For any 
    $s   \in Q \cup i$, $\mathcal{B}_{iQ} = \mathcal{B}_{sQ}$ and the restrictions 
    $\delta_s$ are the same for all such $s$.   

   The triple 
$(S/\sim,(\mathcal{B}_{iQ})_{i \in {1,..,n+1},Q \in \mathfrak{P}},\cap_{r \in Q \cup i}A_r)$ is hence a 
 $k$-analytic space [\cite{berk2}, pg $17$] and is isomorphic
as an analytic space to $\mathbb{P}^{n,\mathrm{an}}_k$.
 
This description of $\mathbb{P}^{n,\mathrm{an}}_k$ enables us to see it as the union of $n+1$ affinoid domains, each isomorphic
to the $n$-dimensional Berkovich closed polydisc of poly radius $(1,\ldots,1)$.     
   We claim that $\{A_j\}$ is a formal cover of $\mathbb{P}^{n,\mathrm{an}}_k$. 

Since
\begin{align*}
\widetilde{\mathcal{B}_{ji}} = \tilde{k}[T_1/T_j,..T_i/T_j,(T_i/T_j)^{-1},..T_{n+1}/T_j], 
\end{align*} 
the $\tilde{k}$-algebra
  $\widetilde{\mathcal{B}_{ji}}$ corresponds to an
 open affine sub scheme 
of $\widetilde{\mathcal{M}({\mathcal{B}}_j)}$ and $\widetilde{A_j \cap A_i}$ is 
an open affine subset of $\tilde{A}_j$.
 We conclude that our claim is verified, thus proving 
the proposition for the case $V = \mathbb{P}^{n,\mathrm{an}}_k$.   
  
  In the general case we make use of the fact that $V$ is projective and hence can be seen as a closed subset of 
$\mathbb{P}^n_k$ for some $n$. Furthermore for every $j$, $\delta_j$ restricts to an isomorphism
\begin{align*}
 \delta_j : V^{\mathrm{an}} \cap A_j \to \mathcal{M}(k\{T_1/T_j,..T_i/T_j,..T_{n+1}/T_j\}/I_j)
\end{align*}
 where $I_j$ is an ideal contained in the polynomial ring
 $k[T_1/T_j,..T_{n+1}/T_j]$ and determined by the embedding of $V$ into $\mathbb{P}^n_k$. 
 The generators of $I_j$ can be chosen to be polynomials $\{f_1,\ldots,f_u\}$ belonging to 
 $k[T_1/T_j,..T_{n+1}/T_j]$ such that if 
 $\rho$ denotes the Gauss norm (or spectral norm) of the affinoid algebra 
 $k\{T_1/T_j,..T_i/T_j,.,T_{n+1}/T_j\}$ then $|f_i(\rho)| \leq 1$. The justification for this follows
 from [\cite{berk}, (2) Page 64].

   We claim that the cover $\{V^{\mathrm{an}} \cap A_i\}_i$ is a formal cover of $V^{\mathrm{an}}$. 
 Let $1 \leq i,j \leq n+1$ with $i \neq j$. We need only show that  
the affinoid space $V^{\mathrm{an}} \cap A_i \cap A_j$ is a formal domain in 
$V^{\mathrm{an}} \cap A_i$. We may assume that $V^{\mathrm{an}} \cap A_i \cap A_j$
is not empty, since otherwise it is trivially a formal domain. 
Observe that we have the following equality. 
\begin{align*}
(V^{\mathrm{an}} \cap A_i) \cap A_j = \{x \in V^{\mathrm{an}} \cap A_i | |T_j/T_i(x)|  = 1\}. 
\end{align*}
By assumption, there exists an $x \in V^{\mathrm{an}} \cap A_i$ such that 
$|T_j/T_i(x)|  = 1$. It follows that if $\rho$ denotes the spectral norm of 
the strict affinoid algebra corresponding to $V^{\mathrm{an}} \cap A_i$ then $|T_j/T_i(\rho)|  = 1$. 
The above equality and [\cite{berk}, (ii) pg.28] imply that the  
affinoid algebra corresponding to the space $(V^{\mathrm{an}} \cap A_i) \cap A_j$ 
is $(\mathcal{B}_i/I_i)\{T_i/T_j\}$. Applying 
 [\cite{BGR}, Proposition 7.2.6/3] will give the result. 
    
    For a complete non-Archimedean real valued field extension $L/k$, it can be checked that 
$\mathfrak{W}_L := \{V^{\mathrm{an}}_L \cap (A_j {\times}_k L)\}_j$ forms a formal cover of 
$V^{\mathrm{an}}_L$.
  \end{proof}

   The topological subspace $A_j \subset \mathbb{P}^{n,\mathrm{an}}_k$ is endowed with the structure of a 
$k$-affinoid space via the 
homeomorphism $\delta_j$. Hence we will refer to it in future as an affinoid domain in $\mathbb{P}^{n,\mathrm{an}}_k$
and identify it with the strict affinoid space $\mathcal{M}(\mathcal{B}_j)$.

\subsection{ The class of open sets $\mathcal{O}_L$ }   
Let $L/k$ be a complete non-Archimedean real valued algebraically closed field extension. Let
$x \in \mathbb{P}_k^{n,\mathrm{an}}(L)$ be an $L$-point of the analytification of projective $n$-space.
As outlined in Remark 1.1,
the pair $x : \mathrm{Spec}(L) \to \mathbb{P}_k^{n}$, 
$\mathrm{id}_L : \mathrm{Spec}(L) \to \mathrm{Spec}(L)$ defines a closed point of the 
variety $V_L = V \times_k \mathrm{Spec}(L)$ which we denote $x_L$. 
We proceed to define the family $\mathcal{O}_{x_L,\mathbb{P}_L^{n,\mathrm{an}}}$ of open neighbourhoods of $x_L$ 
    
 In Proposition 2.6 we showed that having chosen an affine chart of $\mathbb{P}^n_k$, the space $\mathbb{P}^{n,\mathrm{an}}_k$ can be seen to be the union of $n+1$ 
$n$-dimensional
Berkovich closed disks defined over $k$. We denoted this collection $\{A_i\}_i$. 
Likewise $\mathbb{P}^{n,\mathrm{an}}_L = \bigcup_i A_{i,L}$ where
$\{A_{i,L} := A_i {\times}_k L\}_i$ forms a collection of    
 $n+1$ 
$n$-dimensional
Berkovich closed disks defined over $L$. For some $j$, we must have that $x_L \in A_{j,L}$. 
Let $\mathcal{O}_{x_L,\mathbb{P}_L^{n,\mathrm{an}}}$ be the family
of Berkovich open balls containing $x_L$ and contained in $A_{j,L}$.
 Each such Berkovich open ball centered at $x_L$ will be in bijection with an $n$-tuple 
of positive real numbers less than or equal to $1$.
 We proceed below in greater detail. 
 
 In the 
proof of Proposition $2.6$, we introduced the following notation concerning the
$A_{j,L}$. 
\begin{align*}
   A_{j,L} = \mathcal{M}({\mathcal{B}}_{j,L})
\end{align*}
where
\begin{align*}
 \mathcal{B}_j := k\{T_1/T_j,..T_i/T_j,..T_{n+1}/T_j\}
\end{align*}
  and ${\mathcal{B}}_{j,L} := \mathcal{B}_j {\times}_k L$.
  The affinoid space $\mathcal{M}(\mathcal{B}_{j,L})$ is
 an $n$-dimensional Berkovich closed disk over $L$.
The point $x_L \in \mathbb{P}^n_L$ is a closed point defined over $L$. Let it
 have coordinates $[x_{1,L}:\ldots:x_{{n+1},L}]$ and $i$ be any index such that 
 $|x_{j,L}| \leq |x_{i,L}|$ for every $j \in \{1,..,n+1\}$. By definition of the space $A_{i,L}$, $x_L$ must belong to it.
Using the fact that $A_{i,L} = \mathcal{M}(\mathcal{B}_{i,L})$ we 
define a family of open neighbourhoods of $x_L$, namely the collection of Berkovich open balls defined
by the equations $|(T_j/T_i - x_{j,L}/x_{i,L})(p)| < r_j$, where $j \neq i$ and $r_j \leq 1$.
 If $x_L \in A_{t,L}$ then $|x_{i,L}| = |x_{t,L}|$
  and any such Berkovich open sub ball of $A_{i,L}$ will also be contained in 
$A_{t,L}$.
By this we mean that there exists $\mathbf{r}_t := (r'_1,\ldots,r'_{n+1}) \in (0,1]^{n+1}$ such that
$r'_t = 1$ and $B = \{p \in A_{t,L}| \wedge_{j \neq t} [|(T_j/T_t - x_{j,L}/x_{t,L})(p)| < r'_j] \}$. 
It can be shown that as $B$ 
varies through all the Berkovich open sub balls of $A_{i,L}$ which contain $x_L$, it also varies through      
all Berkovich open sub balls of $A_{s,L}$ which contain $x_L$ i.e. for any $s$ such that $|x_{s,L}| = |x_{i,L}|$. Hence, we may define  
the family $\mathcal{O}_{x_L,\mathbb{P}_L^{n,\mathrm{an}}}$ to be the collection $\{B(x_L,\mathbf{r}_i)\} \subset A_{i,L}$
 for any $i$ such that $|x_{j,L}| \leq |x_{i,L}|$ for every 
$j \in \{1,..,n+1\}$ 
and where 
$\mathbf{r}_i := (r_1,\ldots,r_i,\ldots,r_{n+1})$ is any $n +1$-tuple for which $r_i =1$ and
$r_h \leq 1$ for any other $h$.
Let $\mathcal{O}_{L,\mathbb{P}_L^{n,\mathrm{an}}} := \bigcup_{x \in \mathbb{P}^{n,\mathrm{an}}_k(L)} \mathcal{O}_{x_L,\mathbb{P}_L^{n,\mathrm{an}}}$. 

To an
element $W$ of this family we associate an $(n+1)^2$-tuple. That is, we define a function 
$h_{L,\mathbb{P}_L^{n,\mathrm{an}}} : \mathcal{O}_{L,\mathbb{P}_L^{n,\mathrm{an}}} \to \mathbb{R}^{(n+1)^2}$. If for an index $t$, $|x_{t,L}| = |x_{i,L}|$ then let 
$\mathbf{r}_t = (r_1,\ldots,r_{n+1})$ be such that $r_t = 1$ and the Berkovich open ball 
$W$ is defined by the equations $|(T_j/T_t - x_{j,L}/x_{t,L})(p)| < r_j$ for $j \neq t$.
 If on the other hand $|x_{t,L}| < |x_{i,L}|$
 for some $i \in \{1,\ldots,n+1\}$
  then let $\mathbf{r}_t = (1,\ldots,1)$. 
Let $h_{L,\mathbb{P}_L^{n,\mathrm{an}}}(W) := (\mathbf{r}_i)_i$.

     If $V$ is an arbitrary projective $k$-variety then by definition it admits an embedding $V \hookrightarrow \mathbb{P}^{n}_k$. 
Let $x \in V^{\mathrm{an}}(L) \subset \mathbb{P}^{n,\mathrm{an}}_k(L)$. 
We defined a family of open neighbourhoods of $x_L$ in $\mathbb{P}^{n,\mathrm{an}}_L$ which we
called
 $\mathcal{O}_{x_L,\mathbb{P}_L^{n,\mathrm{an}}}$.
 Along with this family of open neighbourhoods, we also defined a function 
 $h_{L,\mathbb{P}_L^{n,\mathrm{an}}}$ which defines the poly radius of every element in $\mathcal{O}_{x_L,\mathbb{P}_L^{n,\mathrm{an}}}$.
  Restricting 
every element of the family $\mathcal{O}_{x_L,\mathbb{P}^{n,\mathrm{an}}_L}$
  to $V_L^{\mathrm{an}}$ will define a family $\mathcal{O}_{x_L, V_L^{\mathrm{an}}}$ of open 
neighbourhoods of $x_L$ in $V^{\mathrm{an}}_L$. Hence 
$\mathcal{O}_{x_L,V_L^{\mathrm{an}}} = \{W \cap V_L^{\mathrm{an}} | W \in \mathcal{O}_{x_L,\mathbb{P}_L^{n,\mathrm{an}}}\}$.  
Let $W \in \mathcal{O}_{x_L,V_L^{\mathrm{an}}}$. We define $Q_W$ to be the collection of 
$W' \in \mathcal{O}_{x_L,\mathbb{P}^{n,\mathrm{an}}_L}$ such that 
$W' \cap V^{\mathrm{an}}_L = W$. We set 
$h_{L,V_L^{\mathrm{an}}}(W \cap V_L^{\mathrm{an}}) := \mathrm{inf}_{W' \in Q_W} \{h_{L,\mathbb{P}_L^{n,\mathrm{an}}}(W')\}$. 
The infimum here is taken with respect to 
the coordinate wise partial ordering defined on $\mathbb{R}^{{n+1}^2}$ 
i.e. $(x_n)_n \leq (y_n)_n$ if and only if $x_i \leq y_i$ for every $1 \leq i \leq (n+1)^2$. 
By Lemma 2.7, the function $h_{L,V_L^{\mathrm{an}}}$ is well defined. 

\begin{lem}
  Let $x_L \in V^{\mathrm{an}}_L(L)$ and $W \in \mathcal{O}_{x_L, V_L^{\mathrm{an}}}$. 
The set of $(n+1)^2$-tuples 
 $\{h_{L,\mathbb{P}_L^{n,\mathrm{an}}}(W') | W' \in Q_W\} \subset \mathbb{R}^{(n+1)^2}$
has a well defined infimum with respect to the coordinate wise partial ordering defined on $\mathbb{R}^{(n+1)^2}$. 
 \end{lem}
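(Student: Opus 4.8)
The plan is to deduce the statement from the Dedekind completeness of $\mathbb{R}$, applied coordinate by coordinate.

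First I would isolate the relevant order-theoretic fact: in the poset $(\mathbb{R}^{(n+1)^2},\leq)$ equipped with the coordinatewise partial ordering, every nonempty subset that is bounded below possesses a greatest lower bound. Indeed, given such a subset $S$, for each index $1 \leq m \leq (n+1)^2$ the set of $m$-th coordinates $\{s_m : s \in S\}$ is a nonempty bounded-below subset of $\mathbb{R}$, so $\iota_m := \inf\{s_m : s \in S\}$ exists in $\mathbb{R}$; the tuple $\iota := (\iota_m)_m$ is then a lower bound for $S$, and any lower bound $\ell$ of $S$ satisfies $\ell_m \leq s_m$ for all $s \in S$, hence $\ell_m \leq \iota_m$, so $\ell \leq \iota$. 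Thus $\iota = \inf S$ in $\mathbb{R}^{(n+1)^2}$.

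Next I would check the two hypotheses needed to apply this to $S = \{h_{L,\mathbb{P}^{n,\mathrm{an}}_L}(W') : W' \in Q_W\}$. Nonemptiness of $Q_W$ is immediate from the definition of $\mathcal{O}_{x_L,V_L^{\mathrm{an}}}$: since $W$ belongs to that family, $W = W' \cap V_L^{\mathrm{an}}$ for at least one $W' \in \mathcal{O}_{x_L,\mathbb{P}^{n,\mathrm{an}}_L}$. Boundedness below comes from unravelling the definition of $h_{L,\mathbb{P}^{n,\mathrm{an}}_L}$: for each $W'$ and each index $t$, the block $\mathbf{r}_t$ is either $(1,\ldots,1)$ or a tuple in $(0,1]^{n+1}$ recording the radii in the defining inequalities of $W'$ in the chart $A_{t,L}$; in both cases all coordinates of $h_{L,\mathbb{P}^{n,\mathrm{an}}_L}(W')$ lie in $(0,1]$. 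Hence $S \subseteq [0,1]^{(n+1)^2}$ is bounded below by the zero tuple, and the previous paragraph produces a well-defined infimum $\inf S \in [0,1]^{(n+1)^2}$.

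I do not expect a serious obstacle here beyond bookkeeping. The only point needing a little care is verifying that the chart-by-chart description of $h_{L,\mathbb{P}^{n,\mathrm{an}}_L}$ genuinely confines every entry to $(0,1]$ — in particular that passing between the charts $A_{t,L}$ containing $x_L$ never manufactures a radius exceeding $1$ — and this is exactly what was recorded when the family $\mathcal{O}_{x_L,\mathbb{P}^{n,\mathrm{an}}_L}$ was set up above. I would close with the remark that the lemma asserts only the existence of the infimum as an element of $\mathbb{R}^{(n+1)^2}$; it makes no claim that this infimum is attained or is itself of the form $h_{L,\mathbb{P}^{n,\mathrm{an}}_L}(W')$, so no directedness or compactness property of $Q_W$ is required.
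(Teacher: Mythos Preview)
Your proof is correct and in fact slightly more streamlined than the paper's. Both arguments begin the same way: set $s_m := \inf\{p_m : p \in P\}$ coordinatewise and observe that $(s_m)_m$ is a lower bound for $P$. Where you verify that $(s_m)_m$ is the \emph{greatest} lower bound by the one-line order-theoretic observation that any lower bound $\ell$ of $P$ satisfies $\ell_m \leq \inf\{p_m : p \in P\} = s_m$ for each $m$, the paper instead exhibits a sequence in $P$ converging to $(s_m)_m$: for each coordinate $j$ it picks $W_{j,m} \in Q_W$ with $j$-th radius tending to $s_j$, then sets $W_m := \bigcap_j W_{j,m}$, notes that $W_m$ still lies in $Q_W$ (this uses that $Q_W$ is closed under finite intersections of balls centred at $x_L$), and checks that $h_{L,\mathbb{P}^{n,\mathrm{an}}_L}(W_m) \to (s_m)_m$. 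That extra construction is not needed for the bare existence of the infimum, and your purely order-theoretic argument, which uses nothing about $Q_W$ beyond nonemptiness and the uniform bound $h_{L,\mathbb{P}^{n,\mathrm{an}}_L}(W') \in (0,1]^{(n+1)^2}$, already suffices.
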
 
(Note : This infimum need not belong to the set $\{h_{L,\mathbb{P}^{n,\mathrm{an}}_L}(W') | W' \in Q_W\}$). 
\begin{proof}
Let $P := \{h_{L,\mathbb{P}_L^{n,\mathrm{an}}}(W') | W' \in Q_W\}$. 
   We are required to show that there exists $(s_i)_i \in \mathbb{R}_{\geq 0}^{(n+1)^2}$ such that 
\begin{enumerate}
 \item If $(y_i)_i \in P$ then $s_j \leq y_j$ for every $j$. 
\item Let $(z_i)_i \in \mathbb{R}^{(n+1)^2}$ be such that for any  
$(y_i)_i \in P$ the inequality 
         $z_j \leq y_j$ holds for every $j$. Then we must have $z_j \leq s_j$ for every $j$. 
          \end{enumerate}
          
     Let $p_i$ denote the $i$-th projection morphism $\mathbb{R}^{(n+1)^2} \to \mathbb{R}$.
      Let $s_i \in \mathbb{R}_{\geq 0}$ denote that real number 
 which is the infimum of the set $\{p_i(x) | x \in P\}$. The 
 $(n+1)^2$ tuple $(s_i)_i$ can be checked to satisfy property (1). 
 
     To show that $(s_i)_i$ satisfies property (2) as well, we exhibit a sequence of elements in $P$ 
 which converges to $(s_i)_i$. Let $j \in\{1,\ldots,(n+1)^2\}$. By definition, there
 exists a  sequence of open neighbourhoods $(W_{j,m})_m$ in $\mathcal{O}_{x_L,\mathbb{P}_L^{n,\mathrm{an}}}$
such that $(p_j \circ h_{L,\mathbb{P}^{n,\mathrm{an}}_L}(W_{j,m}))_m$ converges to $s_j$. 
Let $W_m := \bigcap_j W_{j,m}$. The set $W_m$ is an open neighbourhood belonging to 
$Q_W$. 
Also, $(p_j \circ h_{L,\mathbb{P}_L^{n,\mathrm{an}}})(W_{m}) \leq (p_j \circ h_{L,\mathbb{P}_L^{n,\mathrm{an}}})(W_{j,m})$ for 
every $j$. 
It follows from the construction of $(s_i)_i$ that for any 
$j \in \{1,\ldots,(n+1)^2\}$, the sequence $((p_j \circ h_{L,\mathbb{P}_L^{n,\mathrm{an}}})(W_{m}))_m$ converges
to $s_j$. This is equivalent to saying that $(h_{L,\mathbb{P}_L^{n,\mathrm{an}}}(W_{m}))_m$ converges to 
$(s_i)_i$. 
  \end{proof}       
   
   In Remark 1.5, we introduced a family of functions 
$\mathbb{R}_{\geq 0}^{(n+1)^2} \to \mathbb{R}$ which we denoted $S$.

\begin{lem}
  Let $L/k$ be an algebraically closed complete non-Archimedean real valued field extension and 
$g \in S$. 
  Let $\mathcal{O}_1$ and $\mathcal{O}_2$
 belong to $\mathcal{O}_{x_L,V_L^{\mathrm{an}}}$ such that $\mathcal{O}_1 \subset \mathcal{O}_2$.
The following inequality holds true. 
\begin{align*}
 (g \circ h_{L,V_L^{\mathrm{an}}})(\mathcal{O}_1) \leq (g \circ  h_{L,V_L^{\mathrm{an}}})(\mathcal{O}_2). 
\end{align*}
\end{lem}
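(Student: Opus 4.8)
The plan is to reduce the statement to a comparison of poly radii in $\mathbb{P}^{n,\mathrm{an}}_L$ and then invoke the monotonicity hypothesis (2) on $g$ from Remark 1.5. First I would unwind the definitions: write $\mathcal{O}_i = W_i \cap V_L^{\mathrm{an}}$ with $W_i \in \mathcal{O}_{x_L,\mathbb{P}_L^{n,\mathrm{an}}}$, and recall that $h_{L,V_L^{\mathrm{an}}}(\mathcal{O}_i) = \inf_{W' \in Q_{\mathcal{O}_i}} h_{L,\mathbb{P}_L^{n,\mathrm{an}}}(W')$, this infimum existing by Lemma 2.7. So the task is to show $g\bigl(\inf_{W' \in Q_{\mathcal{O}_1}} h_{L,\mathbb{P}_L^{n,\mathrm{an}}}(W')\bigr) \le g\bigl(\inf_{W' \in Q_{\mathcal{O}_2}} h_{L,\mathbb{P}_L^{n,\mathrm{an}}}(W')\bigr)$. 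Since $g$ is monotone for the coordinatewise partial order (Remark 1.5(2)) and continuous (Remark 1.5(1)), it suffices to show the inequality of infima coordinatewise, i.e. $\inf_{W' \in Q_{\mathcal{O}_1}} h_{L,\mathbb{P}_L^{n,\mathrm{an}}}(W') \le \inf_{W' \in Q_{\mathcal{O}_2}} h_{L,\mathbb{P}_L^{n,\mathrm{an}}}(W')$ in $\mathbb{R}_{\ge 0}^{(n+1)^2}$.

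The key step is therefore the set-theoretic observation that $Q_{\mathcal{O}_2} \subseteq$ (roughly) the downward closure of $Q_{\mathcal{O}_1}$ inside $\mathcal{O}_{x_L,\mathbb{P}_L^{n,\mathrm{an}}}$. More precisely: given $W' \in Q_{\mathcal{O}_2}$, so $W' \cap V_L^{\mathrm{an}} = \mathcal{O}_2 \supseteq \mathcal{O}_1$, I want to produce $W'' \in Q_{\mathcal{O}_1}$ with $h_{L,\mathbb{P}_L^{n,\mathrm{an}}}(W'') \le h_{L,\mathbb{P}_L^{n,\mathrm{an}}}(W')$ coordinatewise. The natural candidate: take $W''' \in Q_{\mathcal{O}_1}$ arbitrary (nonempty since $\mathcal{O}_1 \in \mathcal{O}_{x_L,V_L^{\mathrm{an}}}$), and set $W'' := W' \cap W'''$. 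Since the Berkovich open balls centered at $x_L$ in a fixed $A_{i,L}$ are totally ordered by the coordinatewise order on their poly radii (they form the chart-description family $B(x_L,\mathbf{r})$), $W' \cap W'''$ is again such a ball, with poly radius the coordinatewise minimum, hence $h_{L,\mathbb{P}_L^{n,\mathrm{an}}}(W'') \le h_{L,\mathbb{P}_L^{n,\mathrm{an}}}(W')$. And $W'' \cap V_L^{\mathrm{an}} = (W' \cap V_L^{\mathrm{an}}) \cap (W''' \cap V_L^{\mathrm{an}}) = \mathcal{O}_2 \cap \mathcal{O}_1 = \mathcal{O}_1$, so $W'' \in Q_{\mathcal{O}_1}$. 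This shows every element of $\{h_{L,\mathbb{P}_L^{n,\mathrm{an}}}(W') : W' \in Q_{\mathcal{O}_2}\}$ dominates some element of $\{h_{L,\mathbb{P}_L^{n,\mathrm{an}}}(W'') : W'' \in Q_{\mathcal{O}_1}\}$, which forces the desired inequality of infima. Finally, I apply monotonicity of $g$ (extended to $\mathbb{R}_{\ge 0}^{(n+1)^2}$ as in Remark 1.5, noting the convention that a tuple with a zero coordinate maps to $0$, which is consistent with monotonicity) to conclude $(g \circ h_{L,V_L^{\mathrm{an}}})(\mathcal{O}_1) \le (g \circ h_{L,V_L^{\mathrm{an}}})(\mathcal{O}_2)$.

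I expect the main obstacle to be the bookkeeping around which affine chart index $i$ is used to read off the poly radius: the tuple $h_{L,\mathbb{P}_L^{n,\mathrm{an}}}(W)$ records a vector $(\mathbf{r}_t)_t$ over all $t$, with $\mathbf{r}_t = (1,\dots,1)$ when $|x_{t,L}| < |x_{i,L}|$, and one must check that intersecting two balls behaves compatibly across all the relevant charts simultaneously — i.e. that $W' \cap W'''$ really has poly radius the componentwise minimum in every block $\mathbf{r}_t$ with $|x_{t,L}| = |x_{i,L}|$, and trivially agrees in the blocks where both are $(1,\dots,1)$. This is exactly the content already established in Section 2.2 (the passage showing a Berkovich open sub-ball of $A_{i,L}$ containing $x_L$ is simultaneously such a ball in every $A_{t,L}$ with $|x_{t,L}|=|x_{i,L}|$), so the obstacle is notational rather than substantive; once that compatibility is cited, the argument above goes through cleanly.
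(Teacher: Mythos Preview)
Your argument is correct and is essentially the paper's: the paper first says a ``straightforward argument'' reduces to $V=\mathbb{P}^n_k$ (where each $Q_{\mathcal{O}_i}$ is a singleton, so $h_{L,V_L^{\mathrm{an}}}=h_{L,\mathbb{P}_L^{n,\mathrm{an}}}$), then shows $\mathcal{O}_1\subset\mathcal{O}_2$ forces the coordinatewise poly-radius inequality by exhibiting, via density of $|L^*|$ in $\mathbb{R}_{>0}$, an $L$-point of $\mathcal{O}_1\setminus\mathcal{O}_2$ whenever some coordinate fails, and finally applies monotonicity of $g$. Your intersection trick $W''=W'\cap W'''$ is precisely the ``straightforward'' reduction the paper leaves unwritten, and your observation that the intersection of two balls centred at $x_L$ has componentwise-minimal poly radius is the structural counterpart of the paper's witness-point argument.
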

\begin{proof}
  A straight forward argument reduces to proving the lemma when $V = \mathbb{P}^{n}_k$.  
  In the course of the proof we will make use of this fact: Since the field $L$ 
  is algebraically closed and endowed with a non-trivial valuation, its 
value group is dense in $\mathbb{R}_{\geq 0}$. 
Let $h_{L,\mathbb{P}_L^{n,\mathrm{an}}}(\mathcal{O}_1) := (\mathbf{r}_1,\ldots,\mathbf{r}_{n+1})$
and $ h_{L,\mathbb{P}_L^{n,\mathrm{an}}}(\mathcal{O}_2) := (\mathbf{r}'_1,\ldots,\mathbf{r}'_{n+1})$.  
If $\mathbf{r}_t = (r_{1,t},\ldots,r_{n+1,t})$ and $\mathbf{r}'_t = (r'_{1,t},\ldots,r'_{n+1,t})$ then 
we claim that for every $i,t$, $r_{i,t} \leq r'_{i,t}$. We proceed by assuming the contrary.
 If for some $t$ there exists an $i$ such that $r_{i,t} > r'_{i,t}$ then 
we can find an element $y_i \in L$ 
such that $r'_{i,t} < |(y_i/x_t - x_i/x_t)| < r_{i,t}$. The element $[x_1:..:y_i:..:x_{n+1}]$ will belong 
to $\mathcal{O}_1$ but not $\mathcal{O}_2$. This is not possible and we must hence have $r_i \leq r'_i$. 
Our choice of the function $g$ implies that the inequality 
 $(g \circ  h_{L,\mathbb{P}_L^{n,\mathrm{an}}})(\mathcal{O}_1) \leq (g \circ  h_{L,\mathbb{P}_L^{n,\mathrm{an}}})(\mathcal{O}_2)$ holds. 
\end{proof}

 The following lemma implies that the family of open neighbourhoods $\mathcal{O}_{x_L,V_L^{\mathrm{an}}}$ for 
$x \in V^{\mathrm{an}}(L)$ does not depend on the extension $L/k$. 
\begin{lem}
  Let $L'/k$ and $L/k$ be 
complete non-Archimedean real valued algebraically closed field extensions
such that $L \subseteq L'$. Let $x \in V^{\mathrm{an}}(L)$. We have the following equality of sets.
\begin{align*}
   \mathcal{O}_{x_{L'},V_{L'}^{\mathrm{an}}} = \{O {\times}_L L'|O \in \mathcal{O}_{x_L,V_L^{\mathrm{an}}}\}.    
\end{align*}
   Furthermore, if $O \in \mathcal{O}_{x_L,V_L^{\mathrm{an}}}$ then $h_{L',V_{L'}^{\mathrm{an}}}(O {\times}_L L') =  h_{L,V_L^{\mathrm{an}}}(O)$. 
 \end{lem}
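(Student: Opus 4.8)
The plan is to reduce at once to the case $V=\mathbb{P}^n_k$ and then to track the explicit affinoid presentations of Proposition~\ref{formal} through the base change $-\times_L L'$. The first observation is that, since $x$ is an $L$-point of $\mathbb{P}^n_k$, the homogeneous coordinates $[x_{1,L}:\cdots:x_{n+1,L}]$ of $x_L$ lie in $L$, and as elements of $L\subseteq L'$ they are literally the homogeneous coordinates of $x_{L'}$. In particular $|x_{j,L}|=|x_{j,L'}|$ for every $j$, so the set of indices $i$ with $|x_{j,L}|\le|x_{i,L}|$ for all $j$ --- equivalently, the collection of affinoid charts $A_{i,\bullet}$ that contain the point --- is the same computed over $L$ or over $L'$. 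This is the only point at which one uses that the absolute values of the coordinates match, and it is what makes the two families $\mathcal{O}_{x_L,\mathbb{P}^{n,\mathrm{an}}_L}$ and $\mathcal{O}_{x_{L'},\mathbb{P}^{n,\mathrm{an}}_{L'}}$ directly comparable.

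Next, fix such an $i$ and a tuple $\mathbf{r}_i=(r_1,\dots,r_{n+1})\in(0,1]^{n+1}$ with $r_i=1$. I would check that the Berkovich open ball $B(x_L,\mathbf{r}_i)\subset A_{i,L}$ cut out by the inequalities $|(T_j/T_i-x_{j,L}/x_{i,L})(p)|<r_j$ (for $j\neq i$) has base change equal, as a subset of $\mathbb{P}^{n,\mathrm{an}}_{L'}$, to the ball $B(x_{L'},\mathbf{r}_i)\subset A_{i,L'}$: this is the compatibility of a Laurent-type subdomain with extension of scalars, since the preimage of $B(x_L,\mathbf{r}_i)$ under the canonical surjection $\mathbb{P}^{n,\mathrm{an}}_{L'}\to\mathbb{P}^{n,\mathrm{an}}_L$ is the locus defined by the same inequalities --- the parameters $x_{j,L}/x_{i,L}$ lie in $L$ and the radii $r_j$ are unchanged. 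Conversely, every element of $\mathcal{O}_{x_{L'},\mathbb{P}^{n,\mathrm{an}}_{L'}}$ is of the form $B(x_{L'},\mathbf{r}_i)$ for some tuple $\mathbf{r}_i\in(0,1]^{n+1}$ (the radii need not lie in any value group), hence is the base change of $B(x_L,\mathbf{r}_i)\in\mathcal{O}_{x_L,\mathbb{P}^{n,\mathrm{an}}_L}$, and distinct balls over $L$ remain distinct over $L'$ since $B(x_L,\mathbf{r}_i)$ is recovered from $B(x_{L'},\mathbf{r}_i)$ through its poly radius. Thus $W'\mapsto W'\times_L L'$ is a bijection $\mathcal{O}_{x_L,\mathbb{P}^{n,\mathrm{an}}_L}\to\mathcal{O}_{x_{L'},\mathbb{P}^{n,\mathrm{an}}_{L'}}$. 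Since the tuple $h_{L,\mathbb{P}^{n,\mathrm{an}}_L}(B(x_L,\mathbf{r}_i))=(\mathbf{r}_t)_t$ is read off coordinatewise from the defining inequalities rewritten in the charts $t$ with $|x_{t,L}|=|x_{i,L}|$ (and set to $(1,\dots,1)$ otherwise), and all of this data is unchanged on replacing $L$ by $L'$, we get $h_{L',\mathbb{P}^{n,\mathrm{an}}_{L'}}(B(x_L,\mathbf{r}_i)\times_L L')=h_{L,\mathbb{P}^{n,\mathrm{an}}_L}(B(x_L,\mathbf{r}_i))$.

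To descend from $\mathbb{P}^n_k$ to a general projective $V$, I would use that $V^{\mathrm{an}}_L$ is the closed analytic subspace of $\mathbb{P}^{n,\mathrm{an}}_L$ cut out by the chosen embedding, so that $V^{\mathrm{an}}_{L'}$ is the preimage of $V^{\mathrm{an}}_L$ under $\mathbb{P}^{n,\mathrm{an}}_{L'}\to\mathbb{P}^{n,\mathrm{an}}_L$; combined with $\mathcal{O}_{x_L,V_L^{\mathrm{an}}}=\{W'\cap V_L^{\mathrm{an}}:W'\in\mathcal{O}_{x_L,\mathbb{P}^{n,\mathrm{an}}_L}\}$ and the fact that taking preimages commutes with intersection, this gives $(W'\cap V_L^{\mathrm{an}})\times_L L'=(W'\times_L L')\cap V_{L'}^{\mathrm{an}}$, while injectivity of $-\times_L L'$ on open subsets follows from surjectivity of $\mathbb{P}^{n,\mathrm{an}}_{L'}\to\mathbb{P}^{n,\mathrm{an}}_L$. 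The first claimed equality then follows from the projective-space bijection of the previous paragraph. For the second, fix $W\in\mathcal{O}_{x_L,V_L^{\mathrm{an}}}$; the same reasoning shows that $-\times_L L'$ restricts to a bijection $Q_W\to Q_{W\times_L L'}$ compatible with $h_{L,\mathbb{P}^{n,\mathrm{an}}_L}$ and $h_{L',\mathbb{P}^{n,\mathrm{an}}_{L'}}$, so the two subsets of $\mathbb{R}^{(n+1)^2}$ of which $h_{L,V_L^{\mathrm{an}}}(W)$ and $h_{L',V_{L'}^{\mathrm{an}}}(W\times_L L')$ are, by definition, the infima coincide; hence so do the infima.

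The main obstacle --- really the only non-formal ingredient --- is the precise statement in the second paragraph: that a Berkovich open ball, presented as above inside one of the charts $A_{i,L}$, has base change equal to the ball defined by the same inequalities inside $A_{i,L'}$, and that no new balls around $x_{L'}$ appear over $L'$. This is exactly the compatibility of the explicit presentations of Proposition~\ref{formal} with $-\times_L L'$; once it is in place, the remainder is bookkeeping with infima, preimages, and intersections.
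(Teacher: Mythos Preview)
Your proposal is correct and follows essentially the same approach as the paper: reduce to $V=\mathbb{P}^n_k$, observe that the affine coordinates $x_{j,L}/x_{i,L}$ of $x_L$ coincide with those of $x_{L'}$, and verify directly that the open ball defined by the inequalities $|(T_j/T_i - x_{j,L}/x_{i,L})(p)|<r_j$ in $A_{i,L}$ base-changes to the ball defined by the same inequalities in $A_{i,L'}$. If anything, you are more explicit than the paper in carrying out the descent from $\mathbb{P}^n$ back to general $V$ --- the paper simply asserts that the reduction ``can be inferred,'' whereas you spell out the bijection $Q_W\to Q_{W\times_L L'}$ and the resulting equality of infima defining $h_{L,V_L^{\mathrm{an}}}$.
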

\begin{proof}
    It can be inferred from the discussion above concerning the family $\mathcal{O}_{x_L,V_L^{\mathrm{an}}}$ 
that it suffices to prove the lemma assuming $V = \mathbb{P}^{n}_k$ for some $n \in \mathbb{N}$. 
If $x \in \mathbb{P}^{n,\mathrm{an}}_k(L)$ then there exists exactly one point on the fibre 
over $x_L$ for the projection morphism $\mathbb{P}^n_L \times_L L' \to \mathbb{P}^n_L$ and 
$(x_L)_{L'} = x_{L'}$. We assume without loss of generality that $x_L \in A_{i,L}$ with homogenous
coordinates $[x_{1,L}:\ldots:x_{n+1,L}]$. If $x_{L'}$ has homogenous coordinates    
 $[x_{1,L'}:\ldots:x_{n+1,L'}]$ then $x_{j,L}/x_{i,L} = x_{j,L'}/x_{i,L'}$ for all $j$. 

    Let $O \in \mathcal{O}_{x_L,\mathbb{P}_L^{n,\mathrm{an}}}$. By definition, $O$ must be of the form 
$\{p \in A_{i,L}| \wedge_{j} |(T_j/T_i - x_{j,L}/x_{i,L})(p)| < r_j \}$. It follows that     
$O {\times}_L L' = \{p \in A_{i,L'}| \wedge_{j} |(T_j/T_i - x_{j,L'}/x_{i,L'})(p)| < r_j \}$ which 
is an element of $\mathcal{O}_{x_{L'},\mathbb{P}_{L'}^{n,\mathrm{an}}}$. Hence 
$\{O {\times}_L L'|O \in \mathcal{O}_{x_L,\mathbb{P}^{n,\mathrm{an}}}\} \subset \mathcal{O}_{x_{L'},\mathbb{P}_{L'}^{n,\mathrm{an}}}$ and 
$h_{L',\mathbb{P}_{L'}^{n,\mathrm{an}}}(O {\times}_L L') = h_{L,\mathbb{P}_L^{n,\mathrm{an}}}(O)$.

    Let $O \in \mathcal{O}_{x_{L'},\mathbb{P}_{L'}^{n,\mathrm{an}}}$. By definition, $O$ must be of the form 
$\{p \in A_{i,L'}| \wedge_{j} |(T_j/T_i - x_{j,L'}/x_{i,L'})(p)| < r_j \}$.
Using the equality $x_{j,L}/x_{i,L} = x_{j,L'}/x_{i,L'}$, 
the image of this 
open set under the projection morphism $A_{i,L} {\times}_L L' \to A_{i,L}$
is of the form $O_0 := \{p \in A_{i,L}| \wedge_{j} |(T_j/T_i - x_{j,L}/x_{i,L})(p)| < r_j \}$. 
It follows that $O_0 {\times}_L L' = O$. Hence 
$\mathcal{O}_{x_{L'},\mathbb{P}_{L'}^{n,\mathrm{an}}} = \{O {\times}_L L'|O \in \mathcal{O}_{x_L,\mathbb{P}_L^{n,\mathrm{an}}}\}$.         
\end{proof}

          As Theorem 1.7 concerns the function $h_{L,\mathbb{P}_L^{n,\mathrm{an}}}$ and the family $\mathcal{O}_{x_L,\mathbb{P}_L^{n,\mathrm{an}}}$ rather than 
          $h_{L,V_L^{\mathrm{an}}}$ and $\mathcal{O}_{x_L,V^{\mathrm{an}}}$, we will henceforth make no reference to $h_{L,V_L^{\mathrm{an}}}$ and 
         $\mathcal{O}_{x_L,V^\mathrm{an}}$. We simplify notation and use $\mathcal{O}_{x_L}$ to denote  $\mathcal{O}_{x_L,\mathbb{P}_L^{n,\mathrm{an}}}$, 
         $h_L$ in place of $h_{L,\mathbb{P}_L^{n,\mathrm{an}}}$ and $\mathcal{O}_L$ for $\mathcal{O}_{L,\mathbb{P}_L^{n,\mathrm{an}}}$.

\subsubsection{ The family $\mathcal{O}_{x_L}$ when $x_L \in \mathbb{P}_L^{1,\mathrm{an}}$. }
   
             The space $\mathbb{P}^{1,\mathrm{an}}_L$ admits two descriptions, one of which 
     was outlined in the proof of Proposition 2.6.  
    We briefly describe these constructions.  
    
     Let $S$ denote the set of all multiplicative
seminorms on the polynomial algebra
$L[T_1,T_2]$ 
 which restrict to the valuation on the field $L$ such that 
if $x \in S$ then it cannot be that $|T_1(x)| = |T_2(x)| = 0$. We define an equivalence relation $\sim$
 on $S$ as follows.
\begin{align*}
x \sim y \Longleftrightarrow  \mbox{ There exists } c \in \mathbb{R}_{>0} 
\mbox{ such that for any homogenous } \\ f \in k[T_1,T_2],
 |f(x)| = c^{deg(f)}|f(y)| .
\end{align*}
   Let
\begin{align*}
 A_2 := \{ x \in S | |T_1(x)| \leq |T_2(x)| \}
\end{align*}
    and 
\begin{align*}
 A_1 := \{ x \in S | |T_2(x)| \leq |T_1(x)|\}. 
\end{align*}     
     The subspaces $A_1$ and $A_2$ are stable under the equivalence relation. We will abuse notation and refer 
     to their images in $S/\sim$ as $A_1$ and $A_2$ as well.  
     
   One can also describe the space $\mathbb{P}^{1,\mathrm{an}}_L$ in the following manner.
  Following Example $2.2$, 
   the space $\mathbb{A}^{1,\mathrm{an}}_L$ can be realised as the set of multiplicative 
   seminorms on the algebra $L[T]$ which restrict to the norm on $L$. We endow this set 
   with the weakest topology such 
   that if $f \in L[T]$ then the function from $\mathbb{A}^{1,\mathrm{an}}_L$ to $\mathbb{R}$ defined by 
   $x \mapsto |f(x)|$ is continuous. Let $y \in \mathbb{A}^{1,\mathrm{an}}_L(L)$. This means that 
   $y$ corresponds to a morphism $L[T] \to L$. Such a morphism defines a seminorm on $L[T]$ and 
   hence corresponds to a point on $\mathbb{A}^{1,\mathrm{an}}_L$ . 
    With this topology the sets $B(y,r) := \{p \in \mathbb{A}^{1,\mathrm{an}}_L| |(T-y)(p)| < r\}$ form a family of 
    open neighbourhoods around 
    the point $y$. These open sets are precisely the Berkovich open balls around $y$ of radius $r$. 
 As a set $\mathbb{P}^{1,\mathrm{an}}_L =   \mathbb{A}^{1,\mathrm{an}}_L \cup \infty$.  A basis of 
 open neighbourhoods around the 
 point $\infty$ is given by sets of the form $\{p \in \mathbb{P}^{1,\mathrm{an}}_L| |T(p)| > r\}$.
 
    We now identify these two descriptions of $ \mathbb{P}^{1,\mathrm{an}}_L$ in order to relate the 
    family $\mathcal{O}_{x_L}$ as $x_L$ varies along the $L$-points of $\mathbb{A}^{1,\mathrm{an}}_L$.
    We go back to the first description of $ \mathbb{P}^{1,\mathrm{an}}_L$. Let $S' \subset S$ 
    denote the sub collection of seminorms 
    such that if $p \in S'$ then $|T_2(p)| \neq 0$. The set $S'$ is stable for the equivalence relation and set $A' := S'/\sim$. 
    By definition, the elements of $A'$ define multiplicative seminorms on the algebra $L[T_1/T_2]$. 
    Hence we have a function from $A' \to  \mathbb{A}^{1,\mathrm{an}}_L$. 
    With the induced topology on $A'$ it can be shown that we have a homeomorphism 
    $H : A' \to \mathbb{A}^{1,\mathrm{an}}_L$. 
    
    An $L$-point of $S'/\sim$ can be uniquely described by means 
    of homogenous coordinates as in the Proj construction. Let $x_L \in S'/\sim$ then 
    $x_L$ can be represented by homogenous coordinates $[a:b]$ where 
    $a, b \in L$ and
    $b \neq 0$. By definition, $H([a:b]) = a/b$, where by $a/b$ we mean the $L$-point on 
    $\mathbb{A}^{1,\mathrm{an}}_L$ defined by the equation $T_1/T_2 = a/b$.
    For the calculations that follow, we will assume without loss of generality that 
    $x_L = [a:1]$.
    
     Let  $B(H(x_L),r)$ be a Berkovich open ball
    around the point $H(x_L)$ of radius $r \leq 1$. By definition $B(H(x_L),r) \in \mathcal{O}_{x_L}$. We will now
    write down its associated 4-tuple $h_L(B(H(x_L),r))$. We divide the problem into three cases. \\

   (1). Let $|a| < 1$. The point $x_L$ does not belong to the closed subspace $A_1$. By definition of the function $h_L$   
     we have that $h_L(B(H(x_L),r)) = ((1,1),(r,1))$.  \\
     
(2). Let $|a| = 1$. The point $x_L$ belongs to both $A_1$ and $A_2$. 
      We  then have that $h_L(B(H(x_L),r)) = ((1,r),(r,1))$. \\ 
      
(3). Let $|a| > 1$. The point $x_L$ does not belong to the closed space $A_2$.  
     It can be shown that $h_L(B(H(x_L),r)) =  ((1,r/|a|^2),(1,1))$.  \\
     
     Similarly, every element of the family $\mathcal{O}_{x_L}$ corresponds to a Berkovich open ball around $H(x_L)$. 
Let $O \in \mathcal{O}_{x_L}$ and let $B(H(x_L),s)$ be the corresponding Berkovich open ball around $x_L$ of radius $s$.  
The radius of this ball can be expressed in terms of the $4$-tuple, $h_L(O)$ as follows. \\
    
 (1). Let $|a| < 1$. If $h_L(O) = ((1,1),(r,1))$ then the corresponding Berkovich open ball around $x_L$ has radius $s = r$. \\
 
 (2). Let $|a| = 1$. If $h_L(O) = ((1,r),(r,1))$ then $s = r$. \\
 
 (3).  Let $|a| > 1$ and $h_L(O) = ((1,r),(1,1))$. If $r \leq 1/|a|$ then $s = r|a|^2$. If 
         $r > 1/|a|$ then $O$ is an open neighbourhood of the point $\infty$. It is a Berkovich open ball around 
         $\infty$ but is not a Berkovich open ball contained in $\mathbb{A}^{1,\mathrm{an}}_L$.  
  \\   
     
      The calculations above will be made use of in Section 6. In future, we will not refer to the 
      homeomorphism $H$ and use $x_L$ itself to denote $H(x_L)$.      

\subsection{Model theory}
    In this section we discuss a theorem of Hrushovski and Loeser which is integral to the proof of 
Theorem $1.7$. The theorem which was proved in \cite{loes} involves considerable use of Model theory \cite{Pillay}. 
In the following section we
 will make clear how the theorem implies a deep result about certain Berkovich analytic spaces.
The first chapter of
the paper \cite{loes} 
is devoted to introducing the basic notions of Model theory (Language, Theory, Model, Structure, type, definable type....)
 which we will assume the reader is familiar with and proceed to outlining 
the general framework within which the theorem is stated. In this section, given a 
non-Archimedean valued field $M$ we write its value group
$\Gamma_{\infty}(M)$ additively. 

    We use the many sorted language $L_\emph{G}$ consisting of the sorts $VF$, $\Gamma$ and 
$k$ for the valued field, the value group and the residue field with the ring, additive abelian group and residue field language respectively
as well as the geometric sorts $S_n$ and $T_n$ for $n \geq 1$ [\cite{loes}, 2.6]. The theory ACVF of algebraically closed valued fields 
 admits both elimination of imaginaries
and elimination of quantifiers in the language $L_\emph{G}$. 
In addition, we include a $0$ - definable point $\infty$ to the value group sort which denotes the valuation of $0 \in M$ for any model $M$ of ACVF. Let 
$\Gamma_{\infty}$ denote the sort $\Gamma \cup \infty$. The ordering on $\Gamma(M)$ induces a natural ordering on $\Gamma_{\infty}(M)$.         
 
   To make sense of Theorem $2.11$ one must understand the concept of 
a stably dominated type [\cite{loes}, Definition $2.5.2$]. 
 Let $A$ be some substructure of $\mathbb{U}$ where $\mathbb{U}$ denotes a large saturated model of ACVF. In the 
theory ACVF, 
 an 
$A$-definable type is stably dominated if and only if
 it is $\Gamma$-orthogonal [\cite{loes}, Proposition 2.8.1]. We now define a
$\Gamma$-orthogonal type.   

\begin{defi}
   \emph{ Let $A$ be a substructure of $\mathbb{U}$ and $p$ be an $A$-definable type. Then $p$ is said to be} $\Gamma$-orthogonal \emph{if
for any model $M$ containing $A$ and a realization $a$ of $p_{|M}$, we have that $\Gamma(\mathrm{dcl}(M \cup a)) = \Gamma(M)$ where
$\mathrm{dcl}(M \cup a)$ denotes the definable closure of the set $M \cup \{a\}$.} 
\end{defi}
    
   Let $V$ be a quasi-projective variety over a non-Archimedean valued field. Let
$X \subset V \times \Gamma_\infty^l$ be an $A$-definable set where
$A \subset VF \cup \Gamma_\infty$ and $l \in \mathbb{N}$.  
For any substructure $C$ containing $A$, we define $\widehat{X}(C)$
 to be the set of $C$-definable stably dominated types which concentrate on $X$ [\cite{loes}, 3.2].
 The space $\widehat{X}$ is a pro - definable set [\cite{loes}, 2.2]. 
  We 
briefly explain how the set $\widehat{X}$ can be given a topology.
 
   As a set $\widehat{V \times \Gamma_\infty^l} = \widehat{V} \times \Gamma_\infty^l$ [\cite{loes}, 3.4]. We endow $\widehat{V}$ with a topology
by defining a collection of pre-basic open sets. Any basic open set is the intersection of a finite number of pre-basic open sets. 
A pre-basic open set of $\widehat{V}$ is of the form 
$\{p \in \widehat{O}|\mathrm{val}(f)_*(p) \in W\}$ where $O \subset V$ is an open subspace of $V$, $f$ a regular function on $O$ and 
$W$ an open subset of $\Gamma_\infty$. The notation $\mathrm{val}(f)_*(p)$ requires some explanation. Firstly $\mathrm{val}(f)$ denotes the composition
$O \to VF \to \Gamma_\infty$. As $f$ is regular, $\mathrm{val}(f)$ is definable. $\mathrm{val}(f)_*(p)$ is a definable type in $\Gamma_\infty$ defined by
$d_{\mathrm{val}(f)_*(p)}(\phi(z,y)) = d_p(\phi(\mathrm{val}(f)(x),y))$. It is in fact a stably dominated type on $\Gamma_\infty$ and is hence constant
[\cite{loes}, 2.7.1]. The set $\widehat{V} \times \Gamma_\infty^l$ can be given the product topology and we let $X$ have the subspace topology.         
Like the space $V^{\mathrm{an}}$, the $k$-points of the variety $V(k)$ can be embedded into $\widehat{V}(k)$ which is the set of stably dominated 
$k$-definable types which concentrate on $V$ if we give $\widehat{V}(k) \subset \widehat{V}$ the topology generated 
 by the pro $k$ - definable open sets of $\widehat{V}$. 
 
    We are now in a position to state the Theorem of Hrushovski and Loeser [\cite{loes}, Theorem 10.1.1] which we will use later. 
     
\begin{thm}
  Let $V$ be a quasi projective variety over a non-Archimedean valued field. Let $A \subset VF \cup \Gamma_\infty$ and 
$X \subset V \times \Gamma_\infty^l$ be an $A$-definable set.
Then there exists an pro $A$-definable continuous deformation retraction
\begin{align*}
 H : I \times \widehat{X} \to \widehat{X}
\end{align*}
whose image $Z$ is 
definably homeomorphic to a definable subset of $\Gamma_\infty^w$ for some finite $A$-definable set
 $w$ such that the following conditions are satisfied.
\begin{enumerate}
 \item Let $R$ denote a finite collection of $A$-definable functions $\xi_i : V \to \Gamma_\infty$.
Then every $\xi_i \in R$ extends to a pro-definable function $\xi_i : \widehat{V} \to V$. We can choose $H$ so that 
$\xi_i \circ H = \xi_i$. This implies that if one were to choose a finite number of subvarieties in $X$ then the homotopy restricts to 
a homotopy of each of these sub varieties. 
\item If $G$ is a finite algebraic group acting on $V$ such that it preserves $X$ then the homotopy can be chosen to be equivariant for this action.
\item Let $I = \left[i_I,e_I\right]$. $H$ satisfies the following property : 
\begin{align*}
  H(e_I,H(t,x)) = H(e_I,x) 
\end{align*}
\item $H$ fixes the image of the homotopy.
\item When $X = V$ and $l = 0$, one may require that the image of the homotopy is 
Zariski dense in $\widehat{V_i}$ for every irreducible component $V_i$ of $V$.
\item Any point in the image, viewed as a stably dominated type has equal transcendence degree and residual transcendence degree. That is
to say that if $p,q$ were two elements of $\widehat{X}$ contained in the image of the homotopy and $M \models ACVF$ which contains $A \cup VF$
and $A \cup \Gamma$ then if 
$c \models p_{|M}$ and $d \models q_{|M}$ then 
\begin{align*}
  \mathrm{trdeg}_M(M(c)) = \mathrm{trdeg}_M(M(d))
\end{align*}
 and similarly
\begin{align*}
  \mathrm{trdeg}_{k(M)} k(M(c)) = \mathrm{trdeg}_{k(M)} k(M(d))
\end{align*}

\end{enumerate}t
 
\end{thm}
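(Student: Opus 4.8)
The plan is to follow the strategy of \cite{loes} and prove the statement by reducing the general case to that of curves via fibrations and induction on $\dim V$. Since as sets $\widehat{V \times \Gamma_\infty^l} = \widehat{V} \times \Gamma_\infty^l$ and the factor $\Gamma_\infty^l$ carries only the $\Gamma$-topology, one may, after a bookkeeping step (absorbing the coordinate projections to $\Gamma_\infty^l$ into the family $R$ of $A$-definable functions the homotopy is required to respect), reduce to the case $l=0$, $X=V$; a general $A$-definable $X \subset V \times \Gamma_\infty^l$ is then recovered by restriction, using property (1). Throughout I would work inside a saturated $\mathbb{U} \models \mathrm{ACVF}$, identify $\widehat{V}$ with the pro-$A$-definable space of $A$-definable stably dominated types concentrating on $V$, and rely on the criterion that a pro-definable map into $\widehat{V}$ or into a $\Gamma$-internal set is continuous precisely when it is continuous separately for the valuation and the $\Gamma$ topologies; this is what lets the combinatorial recipes below produce genuinely continuous homotopies.

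\textbf{The curve case.} For $\dim V = 1$ I would first do $V = \mathbb{P}^1$: the space $\widehat{\mathbb{P}^1}$ is canonically the Berkovich-style tree, and one writes down the explicit standard homotopy that at time $t \in \Gamma_\infty$ sends a point to the generic type of the smallest closed ball of valuative radius $\ge t$ containing it, retracting $\widehat{\mathbb{P}^1}$ onto the generic type of the maximal ball and hence onto a point; marking finitely many branch points — the images of the functions in $R$ and of a chosen finite set of closed points — retracts onto a finite subtree, which is a definable subset of some $\Gamma_\infty^w$. The substantial part is the \emph{relative} curve case: given a definable family $\mathcal{C} \to U$ of curves, produce a homotopy of $\widehat{\mathcal{C}}$ over $\widehat{U}$ that is fiberwise of the above form, uniformly definable in the base, and continuous on the total space. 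I would do this by passing, after a finite cover of $U$, to a relative analogue of a semistable model of the generic fibre whose (relative) skeleton furnishes the fiberwise target, then descend, and verify continuity by the separate-continuity criterion; the finite-cover/descent step and the control of how the fiberwise trees vary over the base (no jumps of the skeleton over the degeneration locus) are the delicate points here.

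\textbf{The inductive step.} Assuming the theorem in dimension $< n$ and taking $\dim V = n$, I would choose — after normalising and removing a proper closed subset, which is harmless since we may enlarge $R$ — a fibration $f : V \dashrightarrow B$ with one-dimensional fibres and $\dim B = n-1$, apply the relative curve case to get a fiberwise homotopy $H_1 : I_1 \times \widehat{V} \to \widehat{V}$ over $\widehat{B}$ with image $\widehat{V}_1$ a family of finite trees over $\widehat{B}$, then apply the inductive hypothesis to $B$ (with an auxiliary family of functions chosen so its skeleton $Z_B$ is compatible with $\widehat{V}_1 \to \widehat{B}$) to get $H_B : I_B \times \widehat{B} \to \widehat{B}$, and lift $H_B$ to a homotopy $H_2$ of $\widehat{V}_1$, the lift existing because over $Z_B$ the family $\widehat{V}_1$ is $\Gamma$-internal fibrewise and points can be moved along the trees compatibly. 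The concatenation $H_1 \ast H_2$, with $I$ the generalised interval obtained by joining $I_1$ and $I_B$, is a retraction onto a $\Gamma$-internal set $Z' \subset \widehat{V}$; one or two further tropical/inflation homotopies supported on $Z'$ straighten it into a definable subset of $\Gamma_\infty^w$. Properties (3) and (4) are built into the construction; (5) follows since the curve homotopy fixes the generic type of the fibre and the base retraction is Zariski dense by induction; and (6) holds because every point of the image is a generic type of a subvariety or is obtained from such by $\Gamma$-internal moves, which change neither the transcendence degree nor the residual transcendence degree.

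\textbf{Remaining properties and the main obstacle.} Property (1) is arranged by always including the given $\xi_i$ among the functions whose level sets the fiberwise and base homotopies preserve; property (2), $G$-equivariance for a finite group $G$ preserving $X$, by choosing all auxiliary data (the family $R$, the fibration, the relative models) $G$-equivariantly, using that $G$ permutes the finitely many branch points. The main obstacle — and what makes the full argument long — is \emph{continuity}: the homotopies are prescribed combinatorially on stably dominated types, and proving the resulting pro-definable maps are continuous on $\widehat{V}$ requires systematic use of the separate-continuity criterion together with tight control of the variation of the fiberwise trees over the base, which is exactly why one passes to a finite cover and a relative semistable model before gluing. A secondary obstacle is keeping every construction \emph{uniformly $A$-definable} throughout the induction, so that the final $I$, $Z$ and $w$ are $A$-definable; this is handled by the finiteness and definable-compactness results for $\Gamma$-internal sets developed earlier in \cite{loes}.
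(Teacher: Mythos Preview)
The paper does not actually prove this statement: Theorem~2.11 is quoted verbatim as the main theorem of Hrushovski--Loeser \cite[Theorem~10.1.1]{loes} and is used as a black box throughout (in Theorems~4.6 and~1.7, and via Proposition~2.14). There is therefore no ``paper's own proof'' to compare against.

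That said, your sketch is a faithful high-level outline of the strategy in \cite{loes}: the reduction to $l=0$ by absorbing the $\Gamma_\infty$-coordinates into the family $R$, the base case of $\mathbb{P}^1$ via the standard ball-inflation homotopy onto a finite tree, the relative curve homotopy over a base, induction on dimension via a curve fibration, concatenation of the fiberwise and lifted base homotopies over a generalised interval, and the subsequent tropical straightening are exactly the architecture of the Hrushovski--Loeser proof. Your identification of continuity of the concatenated homotopy and uniform $A$-definability as the two main technical burdens is also accurate. One point to sharpen: the lift of the base homotopy $H_B$ to $\widehat{V}_1$ is not merely a matter of the fibres being $\Gamma$-internal; in \cite{loes} this requires the iso-definability of the relative skeleton and a separate argument (their ``$\Gamma$-homotopy'' lifting), and the passage to a finite Galois cover to achieve a uniform relative model interacts nontrivially with the descent and with the $G$-equivariance of item~(2). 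But as a roadmap your proposal is correct and matches the cited source.
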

     Here $k(M)$ is used to denote the corresponding residue field of the model $M$.      
     
\subsubsection{Restating Theorem 2.11 for Berkovich Spaces}
    
      What follows in this subsection is a restatement of [\cite{loes}, 13.1]. Let $F$ be a valued field such that its value 
group is a subset of $\mathbb{R}_\infty$.  Let $\mathbf{F}$ denote the substructure defined by the pair $(F,\mathbb{R}_\infty)$.   
      Let $V$ be a quasi projective variety and $X$ be an $\mathbf{F}$-definable subset of $V$. We define the 
      Berkovich space $B_{\mathbf{F}}(X)$ to be the set
of $\mathbf{F}$ - types concentrating on the space $X$ which are almost orthogonal [\cite{loes}, 2.4] to $\Gamma$. 
 We define an almost $\Gamma$-orthogonal type as 
follows. 
\begin{defi}
\emph{If $C \subset \mathbb{U}$ and $p$ is a 
$C$-type then we will say that $p$ is} almost orthogonal to $\Gamma$ \emph{if for any realization $a$ of $p$ we have that
 $\Gamma(C(a)) = \Gamma(C)$.}  
\end{defi}
     The space $B_{\mathbf{F}}(X)$ can be endowed with a topology. We proceed as when we defined a topology on $\widehat{X}$.
A pre-basic open subset of $B_{\mathbf{F}}(X)$ is of the form $\{p \in B_{\mathbf{F}}(X \cap O)|\mathrm{val}(f)_*(p) \in W\}$         
where $O \subset V$ is a Zariski open subspace of $V$ defined over $F$, $f$ a regular function on $O$ defined over $F$ and 
$W$ an $\mathbf{F}$ - definable open subset of $\Gamma_\infty$.
 
If $V$ is an $F$-variety 
then $B_{\mathbf{F}}(V)$ is canonically homeomorphic to the Berkovich space $V^{\mathrm{an}}$ discussed previously.
We now explain
how $\widehat{X}$ relates to the space $B_{\mathbf{F}}(X)$.

      Let $K$ be a maximally complete, algebraically closed valued 
 field containing $F$ having value group $\mathbb{R}_{\infty}$ and whose residue field 
 $k(K)$ is the algebraic closure of the residue field $k(F)$ of $F$. Such a field exists and is unique up to 
 isomorphism over the structure $\mathbf{F}$. We will denote such a field $F_{max}$.   
 
 Let $p \in \widehat{X}(F_{max})$. Then $p_{|F_{max}}$ is an $F_{max}$-type,
 and the formulas in $p$ whose parameters lie in $F$ will define an almost 
$\Gamma$-orthogonal $F$-type i.e. an element of the Berkovich space $B_{\mathbf{F}}(X)$. We have thus defined a map 
\begin{align*}
  \pi_X : \widehat{X}(F_{max}) \to B_{\mathbf{F}}(X).
\end{align*}
\begin{prop} [\cite{loes}, 13.1.1]
    If $X$ is an $\mathbf{F}$-definable subset of an algebraic variety then the morphism $\pi_X : \widehat{X}(F_{max}) \to B_{\mathbf{F}}(X)$ is surjective.
\end{prop}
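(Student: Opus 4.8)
The plan is to lift points of $B_{\mathbf F}(X)$ through the maximally complete field $F_{max}$, using that $F_{max}$ serves as a base for the stably dominated types of $\mathrm{ACVF}$. Fix $q \in B_{\mathbf F}(X)$, that is, an $\mathbf F$-type on $X$ which is almost orthogonal to $\Gamma$, and work inside a large saturated model $\mathbb U \models \mathrm{ACVF}$ containing $F_{max}$. Recall that $\Gamma(\mathbf F) = \mathbb R_\infty = \Gamma(F_{max})$ and that $k(F_{max})$ is the algebraic closure of $k(F)$, so enlarging the base from $\mathbf F$ to $F_{max}$ adds nothing to the value group and only algebraic elements to the residue field. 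The aim is to produce a stably dominated $F_{max}$-definable type $p$ concentrating on $X$ whose formulas with parameters in $F$ recover $q$; by the construction of $\pi_X$ recalled before the statement, such a $p$ lies in $\widehat X(F_{max})$ and satisfies $\pi_X(p) = q$.

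The first and central step is to extend $q$ to a complete type over $F_{max}$ that remains almost orthogonal to $\Gamma$. Realizing $q$ produces a valued field extension $F(a)$ of $F$ which, by almost $\Gamma$-orthogonality, has value group contained in $\mathbb R$; let $L$ be a completion of it, a complete real-valued field containing the completion $\widehat F$ of $F$, as does $F_{max}$. The completed tensor product $L \, \widehat{\otimes}_{\widehat F} \, F_{max}$ is a nonzero Banach $\widehat F$-algebra, so its Berkovich spectrum is nonempty; choosing a point $y$ in it yields a complete real-valued field $\mathcal H(y)$ extending both $L$ and $F_{max}$ compatibly over $F$. Embedding $\mathcal H(y)$ into $\mathbb U$ over $F_{max}$ and letting $a$ denote the image there of the chosen realization of $q$, we obtain a realization $a$ of $q$ with $\mathbb R = \Gamma(F_{max}) \subseteq \Gamma(F_{max}(a)) \subseteq \Gamma(\mathcal H(y)) \subseteq \mathbb R$, hence $\Gamma(F_{max}(a)) = \Gamma(F_{max})$, so $\mathrm{tp}(a/F_{max})$ is almost orthogonal to $\Gamma$. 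A purely model-theoretic substitute would be to take a non-forking extension of $q$ to $F_{max}$ and use the control of $\Gamma$ under independence in $\mathrm{ACVF}$ together with $\Gamma(F_{max}) = \Gamma(\mathbf F)$.

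Now, because $F_{max}$ is maximally complete and algebraically closed, a complete $F_{max}$-type almost orthogonal to $\Gamma$ is stably dominated, and a stably dominated type is definable; this is the metastability of $\mathrm{ACVF}$ over the value group, as recalled in \cite{loes}. Let $p$ be the corresponding $F_{max}$-definable stably dominated type extending $\mathrm{tp}(a/F_{max})$. Since $a \in X$ and $X$, being $\mathbf F$-definable, is $F_{max}$-definable, $p$ concentrates on $X$, so $p \in \widehat X(F_{max})$. Finally $p_{|F_{max}}$ restricts to $\mathrm{tp}(a/F)$, so $\pi_X(p)$ and $q$ are almost $\Gamma$-orthogonal $\mathbf F$-types with the same reduct to formulas over $F$; by elimination of quantifiers in $L_{\mathrm G}$ every $\mathbf F$-formula reduces to $\mathrm{VF}$-conditions over $F$ together with $\Gamma$-conditions on valuations of regular functions, and the latter are determined once almost $\Gamma$-orthogonality holds and $\Gamma(\mathbf F) = \mathbb R_\infty$ is fixed. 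Hence $\pi_X(p) = q$, and $\pi_X$ is surjective.

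The main obstacle is the extension step of the second paragraph: one must move the base from $\mathbf F$ up to $F_{max}$ without introducing elements of the value group outside $\mathbb R_\infty$, which is exactly what the real-valuedness of Berkovich spectra (or, in the model-theoretic variant, the behaviour of $\Gamma$ under forking) provides. The two remaining ingredients --- that types almost orthogonal to $\Gamma$ over a maximally complete model are stably dominated and definable, and the quantifier-elimination bookkeeping identifying $\pi_X(p)$ with $q$ --- are standard and are precisely the facts assembled in the earlier sections of \cite{loes}.
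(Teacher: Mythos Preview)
The paper does not supply its own proof of this proposition; it is simply quoted from \cite{loes}, 13.1.1 (Lemma 14.1.1 in the published version). Your argument is correct and follows the same route as the proof in Hrushovski--Loeser: realize the almost $\Gamma$-orthogonal $\mathbf{F}$-type by some $a$, move the base up to $F_{max}$ while keeping the value group inside $\mathbb{R}$, and then invoke metastability to conclude that over a maximally complete algebraically closed base any type almost orthogonal to $\Gamma$ is stably dominated and hence definable.

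The only notable difference is in the extension step. Hrushovski--Loeser argue more directly: having realized $q$ by $a$, they take a maximally complete algebraically closed $M \supseteq F(a)$ with $\Gamma(M) = \mathbb{R}$ and residue field the algebraic closure of $k(F(a))$; by the Kaplansky uniqueness of $F_{max}$ over $\mathbf{F}$, the field $F_{max}$ embeds into $M$ over $F$, and one simply reads off $\mathrm{tp}(a/F_{max})$ inside $M$. Your detour through the completed tensor product $L \,\widehat{\otimes}_{\widehat F}\, F_{max}$ and its Berkovich spectrum achieves the same amalgamation but brings in heavier Banach-algebra machinery than is needed; the direct maximal-completion argument avoids this entirely. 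Either way one lands on the same $p \in \widehat{X}(F_{max})$, and your final identification $\pi_X(p) = q$---using that an almost $\Gamma$-orthogonal $\mathbf{F}$-type is determined by its restriction to $F$-formulas, since the extra $\mathbb{R}_\infty$-parameters only impose $\Gamma$-conditions already fixed by orthogonality---is correct.
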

   
     Although Theorem $2.11$ concerns itself with
 the space $\widehat{V}$, it can be deduced that the homotopy constructed induces a homotopy
on the Berkovich space $B_{\mathbf{F}}(V)$ of almost $\Gamma$-orthogonal $\mathbf{F}$-types
 such that the image of this retraction is the space $Z(\mathbf{F})$ [\cite{loes}, Section 13]. This in turn implies the 
 existence of a deformation retraction of $V^{\mathrm{an}}$. 
 Furthermore, $(1)$ (which is the only condition we will use)
 of the above result is also fulfilled
by the induced deformation retraction. 
 
\begin{prop}
   Let $V$ be a quasi-projective variety over a non-Archimedean real valued field $F$.
Let $H : I \times \widehat{V} \to \widehat{V}$ be a pro $\mathbf{F}$ - definable deformation retraction 
whose image $Z := H(e,\widehat{V})$ is definably homeomorphic to a definable subset of $\Gamma_{\infty}^w$ where 
$w$ is a finite $\mathbf{F}$ - definable set.  
Let $\mathbf{I} = I(\mathbb{R}_{\infty})$ and $\mathbf{Z} := \pi_V(Z(F_{max}))$.  
Then $H$ induces a deformation retraction
\begin{align*}
 \tilde{H} : \mathbf{I} \times V^{\mathrm{an}} \to V^{\mathrm{an}}
\end{align*}
whose image $\mathbf{Z}$ is 
homeomorphic to a finite simplicial complex.
\end{prop}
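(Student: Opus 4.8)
The plan is to deduce the statement from the Hrushovski--Loeser machinery recalled above, by transporting the pro-$\mathbf{F}$-definable homotopy $H$ on $\widehat{V}$ across the surjection $\pi_V$ of Proposition 2.16 and the canonical homeomorphism $B_{\mathbf{F}}(V)\cong V^{\mathrm{an}}$, and then identifying $\mathbf{Z}=\pi_V(Z(F_{max}))$ with a finite simplicial complex via the piecewise-linear structure of $\Gamma$-internal spaces. Concretely the argument splits into three parts: (i) descend $H$ to a continuous map $\tilde H$ on $V^{\mathrm{an}}$; (ii) check that $\tilde H$ is a deformation retraction with image $\mathbf{Z}$; (iii) show $\mathbf{Z}$ is homeomorphic to a finite simplicial complex.

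First I would descend the homotopy. Since $\Gamma(F_{max})=\mathbb{R}_{\infty}$, the $F_{max}$-points of the generalised interval $I=[i_I,e_I]$ are exactly $\mathbf{I}=I(\mathbb{R}_{\infty})$, so $H$ yields a map $H_{F_{max}}\colon \mathbf{I}\times\widehat{V}(F_{max})\to\widehat{V}(F_{max})$. If $p,q\in\widehat{V}(F_{max})$ satisfy $\pi_V(p)=\pi_V(q)$, then $p$ and $q$ agree on all formulas with parameters in $F$; because each $H(t,-)$ with $t\in\mathbf{I}$ is given by $\mathbf{F}$-definable formulas, $H(t,p)$ and $H(t,q)$ again agree on all such formulas, i.e. $\pi_V(H(t,p))=\pi_V(H(t,q))$. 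Hence $H_{F_{max}}$ descends along $\pi_V$ to a well-defined map $\tilde H\colon \mathbf{I}\times B_{\mathbf{F}}(V)\to B_{\mathbf{F}}(V)$. Its continuity follows from the fact that the topology on $B_{\mathbf{F}}(V)$ is generated by the pre-basic sets $\{p\mid \mathrm{val}(f)_*(p)\in W\}$ with $f$ a regular function over $F$ and $W\subset\Gamma_\infty$ open, which pull back under $\pi_V$ to the analogous pre-basic sets defining the topology of $\widehat{V}$ on which $H$ is continuous; joint continuity in $(t,x)$ is inherited similarly. Transporting along $B_{\mathbf{F}}(V)\cong V^{\mathrm{an}}$ produces the asserted $\tilde H\colon \mathbf{I}\times V^{\mathrm{an}}\to V^{\mathrm{an}}$.

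Next I would verify the deformation-retraction axioms. Applying $\pi_V$ to items (3) and (4) of Theorem 2.11 and using surjectivity of $\pi_V$ gives $\tilde H(i_I,x)=x$ for all $x$, $\tilde H(e_I,\tilde H(t,x))=\tilde H(e_I,x)$, and that $\tilde H$ fixes its own image pointwise. Since $Z=H(e_I,\widehat{V})$ and $\pi_V$ is surjective onto $B_{\mathbf{F}}(Z)$ (Proposition 2.16 applied to the $\mathbf{F}$-definable set $Z$), we get $\tilde H(e_I,V^{\mathrm{an}})=\pi_V(Z(F_{max}))=\mathbf{Z}$, as required.

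Finally, for the simplicial structure of $\mathbf{Z}$ I would use the hypothesis that $Z$ is definably homeomorphic to an $\mathbf{F}$-definable subset $D$ of $\Gamma_\infty^{w}$. For such $\Gamma$-internal iso-definable sets the model-theoretic topology coincides with the subspace topology from $\widehat{V}$, and the restriction of $\pi_V$ to $Z(F_{max})$ is a homeomorphism onto $\mathbf{Z}$; under it $\mathbf{Z}$ is identified with $D(F_{max})=D(\mathbb{R}_{\infty})$. As the value group sort of ACVF is a model of the theory of non-trivially ordered divisible abelian groups, $D(\mathbb{R}_{\infty})$ is a semilinear subset of $\mathbb{R}_{\infty}^{w}$, and any such set admits a finite triangulation, hence is homeomorphic to a finite simplicial complex; this is precisely the piecewise-linearity of $\Gamma$-internal spaces established in \cite{loes}. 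I expect the main obstacle to be part (i): making precise that the $\mathbf{F}$-definable homotopy on $\widehat{V}$ restricts to a genuine \emph{continuous} homotopy on the locus of almost $\Gamma$-orthogonal types, i.e. the compatibility of $H$ with the passage $\widehat{V}\rightsquigarrow B_{\mathbf{F}}(V)=V^{\mathrm{an}}$; once this is in hand, parts (ii) and (iii) are bookkeeping together with the standard triangulability of semilinear sets.
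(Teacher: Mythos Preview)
The paper does not supply its own proof of this proposition; it simply records it as a consequence of \cite[Section 13]{loes}, remarking just before the statement that the homotopy on $\widehat{V}$ ``can be deduced'' to induce one on $B_{\mathbf{F}}(V)$ with image $Z(\mathbf{F})$, and later (in the proof of Theorem~1.7) invoking \cite[Lemma 13.1.1, Corollary 13.1.6]{loes} for the same passage. Your outline is a faithful reconstruction of that Hrushovski--Loeser argument: descent of $H$ along the surjection $\pi_V$ using $\mathbf{F}$-pro-definability, transport of the retraction axioms, and identification of the image with a semilinear subset of $\mathbb{R}_\infty^{w}$ via the $\Gamma$-internal structure of $Z$.

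Two minor remarks. First, the surjectivity result you cite as ``Proposition 2.16'' is Proposition 2.13 in this paper's numbering. Second, in part (iii) you invoke that proposition for $Z$, but $Z$ is not an $\mathbf{F}$-definable subset of an algebraic variety; it is an iso-definable $\Gamma$-internal subset of $\widehat{V}$. The conclusion you want is still correct, but the reason is rather that $Z$ is definably homeomorphic to a subset of $\Gamma_\infty^{w}$ and $\Gamma(F_{max})=\mathbb{R}_\infty=\Gamma(\mathbf{F})$, so $\pi_V$ restricted to $Z(F_{max})$ is already a bijection onto $Z(\mathbf{F})$; the general surjectivity statement for definable subsets of varieties is not the right tool here.
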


\section{An application of the reduction morphism}  
   
  Our goal in this section is to prove Propositions 3.3 which we will make use of in the proof of Theorem 1.7. We do so using the reduction map described in Section 2.1.1. 
Let $L$ be an algebraically closed,
complete, non-Archimedean 
 real valued field which is non-trivially valued. 
 We will write the value group multiplicatively in this section. 
 Let $A := L[T_1,\ldots,T_n]$ and $I$ be an ideal in this polynomial algebra such that $B :=  L[T_1,\ldots,T_n]/I$ is an integral domain.
Let $U := \mathrm{Spec}(B)$. 
  Let
 $\mathcal{A} := L\{r_1^{-1}T_1,\ldots,r_n^{-1}T_n\}$ and $\mathcal{B} := L\{r_1^{-1}T_1,\ldots,r_n^{-1}T_n\}/I$ be strict
 $L$-affinoid algebras with
 $r_i \in |L^*|$. We use $\mathbf{r}$ to denote the $n$ - tuple $(r_1,\ldots,r_n)$. 
 Let $\pi_{\mathcal{A}}$ and $\pi_{\mathcal{B}}$ denote the reduction morphisms
 $\mathcal{M}(\mathcal{A}) \to \widetilde{\mathcal{M}(\mathcal{A})}$ and
$\mathcal{M}(\mathcal{B}) \to \widetilde{\mathcal{M}(\mathcal{B})}$ respectively. 
We simplify notation and use $X$ to denote the affinoid space $\mathcal{M}(\mathcal{B})$. We assume without loss of generality that 
the space $X$ contains the point at the origin of $\mathrm{Spec}(A)^{\mathrm{an}}$. 
 We have a closed immersion 
 $i : \mathcal{M}(\mathcal{B}) \hookrightarrow \mathcal{M}(\mathcal{A})$. By Section 2.1.1, the reduction morphism 
 is a functorial construction and we have an associated morphism between $\tilde{L}$-schemes
  $\tilde{i} : \widetilde{\mathcal{M}(\mathcal{B})} \to \widetilde{\mathcal{M}(\mathcal{A})}$. Note that this morphism 
  is not necessarily a closed immersion, it is however finite [\cite{BGR}, Theorem 6.3.4/2]. 

  \begin{lem}    
Let $x:= (a_1,\ldots,a_n) \in L^n$ be an $L$-point of the affinoid space $X$. 
Let $\mathbf{B}((a_1,\ldots,a_n), \mathbf{r}) \subset \mathcal{M}(\mathcal{A})$
 denote the Berkovich open ball around $(a_1,\ldots,a_n)$ of poly radius $\mathbf{r}$. 
Let $\tilde{x} :=  \pi_{\mathcal{A}} \circ i(x)$. Let $\tilde{i}^{-1}(\tilde{x}) = \{\tilde{y}_1,\ldots,\tilde{y}_t\}$.  
We have that 
\begin{enumerate} 
\item 
\begin{align*}
\mathbf{B}((a_1,\ldots,a_n), \mathbf{r}) \cap X = \bigcup_i \pi_{\mathcal{B}}^{-1}(\tilde{y}_i).  
\end{align*} 
   The connected components of the open set $\mathbf{B}((a_1,\ldots,a_n), \mathbf{r}) \cap X$ 
   are the $\pi_{\mathcal{B}}^{-1}(\tilde{y}_i)$ for all $i$. 
 \item 
    There exists a finite set of polynomials $ \mathbf{F} := \{F_1,\ldots, F_t\} \subset L[T_1,\ldots,T_n]$  such that 
 for any $y \in X$, the open set 
$\pi_{\mathcal{B}}^{-1}(\pi_{\mathcal{B}}(y)) =  \bigcap_{F \in \mathbf{F}} D_X(F,y)$ where 
$D_X(F,y)$ is the open set $\{p \in X | |(F - F(y))(p)| < 1\}$. 
\end{enumerate}  
   \end{lem}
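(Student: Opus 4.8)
The plan is to deduce the statement from the functoriality of the reduction map together with the standard description of the fibres of the reduction map of a polydisc. First I would record that $\mathbf{B}((a_1,\ldots,a_n),\mathbf{r})$ is itself a fibre of $\pi_{\mathcal{A}}$: writing $c_j \in L^*$ for an element with $|c_j| = r_j$ (possible since $r_j \in |L^*|$), the substitution $S_j = c_j^{-1}T_j$ identifies $\mathcal{A}$ with the standard Tate algebra $L\{S_1,\ldots,S_n\}$, so $\widetilde{\mathcal{M}(\mathcal{A})} = \mathbb{A}^n_{\tilde{L}}$, the point $\tilde{x} = \pi_{\mathcal{A}}(i(x))$ is the $\tilde{L}$-rational point with coordinates $\widetilde{c_j^{-1}a_j}$, and the usual computation of the reduction map of a closed polydisc gives $\pi_{\mathcal{A}}^{-1}(\tilde{x}) = \{z : |(S_j - c_j^{-1}a_j)(z)| < 1 \text{ for all } j\} = \mathbf{B}((a_1,\ldots,a_n),\mathbf{r})$. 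Since the reduction map is functorial, the square relating $i$, $\tilde{i}$, $\pi_{\mathcal{A}}$ and $\pi_{\mathcal{B}}$ commutes, so
\[ \mathbf{B}((a_1,\ldots,a_n),\mathbf{r}) \cap X = i^{-1}\bigl(\pi_{\mathcal{A}}^{-1}(\tilde{x})\bigr) = \pi_{\mathcal{B}}^{-1}\bigl(\tilde{i}^{-1}(\tilde{x})\bigr). \]
Because $\tilde{i}$ is finite and $\tilde{x}$ is a closed point with (algebraically closed) residue field $\tilde{L}$, the fibre $\tilde{i}^{-1}(\tilde{x})$ is a finite set of closed points $\tilde{y}_1,\ldots,\tilde{y}_t$, and the first displayed identity of (1) follows.

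For the connected-components statement I would argue that the sets $\pi_{\mathcal{B}}^{-1}(\tilde{y}_i)$ are pairwise disjoint and, by the properties of the reduction morphism of a strict affinoid recalled above ([\cite{berk}, 2.4.4]), each is open in $X$, being the preimage of the closed subset $\{\tilde{y}_i\}$; being finitely many open sets which partition $\mathbf{B}((a_1,\ldots,a_n),\mathbf{r}) \cap X$, each is also closed there, so it remains only to see that each $\pi_{\mathcal{B}}^{-1}(\tilde{y}_i)$ is connected. This is the step I expect to be the main obstacle: $\pi_{\mathcal{B}}^{-1}(\tilde{y}_i)$ is the formal fibre (``tube'') of the reduction map over a closed point, and I would invoke the known connectedness of such tubes — concretely, $\pi_{\mathcal{B}}^{-1}(\tilde{y}_i)$ is the generic fibre of the formal scheme obtained by completing an admissible formal model of $X$ along $\tilde{y}_i$, whose ring is complete local and hence has no nontrivial idempotents. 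Granting this, the $\pi_{\mathcal{B}}^{-1}(\tilde{y}_i)$ are exactly the connected components of $\mathbf{B}((a_1,\ldots,a_n),\mathbf{r}) \cap X$.

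For part (2) the plan is to produce polynomials whose reductions generate $\widetilde{\mathcal{B}}$. Since $\widetilde{\mathcal{B}}$ is a finitely generated $\tilde{L}$-algebra (it is finite over $\widetilde{\mathcal{A}} = \tilde{L}[t_1,\ldots,t_n]$), I would choose generators, lift them to elements of $\mathcal{B}^\circ$, and then — using that the image of $L[T_1,\ldots,T_n]$ is dense in $\mathcal{B}$ (polynomials being dense in $\mathcal{A}$, which surjects onto $\mathcal{B}$) — approximate each lift in the spectral norm to within distance $< 1$ by (the image of) a polynomial $F_i \in L[T_1,\ldots,T_n]$ with $\rho_{\mathcal{B}}(F_i) \leq 1$; the reductions $\widetilde{F_1},\ldots,\widetilde{F_t}$ then still generate $\widetilde{\mathcal{B}}$, and I set $\mathbf{F} = \{F_1,\ldots,F_t\}$. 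The key computation is that for points $p, y \in X$ one has $\pi_{\mathcal{B}}(p) = \pi_{\mathcal{B}}(y)$ if and only if $|(g - g(y))(p)| < 1$ for every $g \in \mathcal{B}^\circ$ — both conditions expressing that $g$ has the same reduction at $p$ and at $y$ for all such $g$, using $\widetilde{\mathcal{B}} = \mathcal{B}^\circ/\mathcal{B}^{\circ\circ}$ — and that this can be tested on the finite set $\mathbf{F}$: any $g \in \mathcal{B}^\circ$ has $\widetilde{g} = P(\widetilde{F_1},\ldots,\widetilde{F_t})$ for some $P \in \tilde{L}[X_1,\ldots,X_t]$, and lifting $P$ to $P' \in L^\circ[X_1,\ldots,X_t]$ and setting $g' = P'(F_1,\ldots,F_t)$ gives $\rho_{\mathcal{B}}(g - g') < 1$ together with the elementary integral-polynomial estimate $|(g' - g'(y))(p)| \leq \max_i |(F_i - F_i(y))(p)|$. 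Combining these, if $p$ lies in every $D_X(F_i,y)$ then $|(g - g(y))(p)| < 1$ for all $g \in \mathcal{B}^\circ$, i.e. $p \in \pi_{\mathcal{B}}^{-1}(\pi_{\mathcal{B}}(y))$, and the reverse inclusion is immediate from $F_i \in \mathcal{B}^\circ$; hence $\pi_{\mathcal{B}}^{-1}(\pi_{\mathcal{B}}(y)) = \bigcap_{F \in \mathbf{F}} D_X(F,y)$, which is (2).

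Thus every step except the connectedness of the tubes $\pi_{\mathcal{B}}^{-1}(\tilde{y}_i)$ is routine bookkeeping with the reduction map and its functoriality; that connectedness is where the real work, or an appeal to the literature on formal/rigid tubes, sits.
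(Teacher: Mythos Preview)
Your proof is correct and follows essentially the same approach as the paper: the paper uses the same commutative square and the identification $\pi_{\mathcal{A}}^{-1}(\tilde{x}) = \mathbf{B}((a_1,\ldots,a_n),\mathbf{r})$ for part (1), citing Bosch [Bos, Kor 6.2] for the connectedness of the formal fibres exactly as you anticipated, and for part (2) it likewise lifts generators of $\widetilde{\mathcal{B}}$ to polynomials and checks that equality of reductions is detected by the conditions $|(F_j - F_j(y))(p)| < 1$. If anything, your treatment of part (2) is more careful than the paper's, which simply asserts that the lifts can be chosen to be polynomials and that the generator conditions characterise the fibre.
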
 
  \begin{proof}
\begin{enumerate} 
\item The reduction map $X \mapsto \tilde{X}$ is functorial on the category of 
$L$-affinoid spaces. Hence we have the following commutative 
     diagram. 
     
\begin{center}
       \setlength{\unitlength}{1cm}
\begin{picture}(10,5)
\put(4,1){$\widetilde{\mathcal{M}(\mathcal{B})}$}
\put(7,1){$\widetilde{\mathcal{M}(\mathcal{A})}$}
\put(4,3.5){$\mathcal{M}(\mathcal{B})$}
\put(7,3.5){$\mathcal{M}(\mathcal{A})$}
\put(4.4,3.3){\vector(0,-1){1.75}}
\put(7.4,3.3){\vector(0,-1){1.75}}
\put(5.1,1.1){\vector(1,0){1.82}}
\put(5.1,3.6){\vector(1,0){1.82}}
\put(5.8,0.7){$\tilde{i}$}
\put(5.6,3.2){$i$}
\put(4.5,2.3){$\pi_B$}
\put(7.5,2.3){$\pi_A$}.
\end{picture}        
 \end{center}    
   
    In the diagram, we used $i$ to denote the closed immersion $\mathcal{M}(\mathcal{B}) \hookrightarrow \mathcal{M}(\mathcal{A})$ and 
    $\tilde{i}$ to denote the morphism $ \widetilde{\mathcal{M}(\mathcal{B})} \to \widetilde{\mathcal{M}(\mathcal{A})}$. 
    By the commutativity of the diagram we need only show that 
      $\pi_{\mathcal{A}}^{-1}(\pi_{\mathcal{A}}(x)) =  \mathbf{B}((a_1,\ldots,a_n), \mathbf{r})$.
       
Let $p \in \mathcal{M}(\mathcal{A})$. 
The point $p$ defines a morphism $\mathcal{A} \to \mathcal{H}(p)$ which induces a morphism
$\tilde{\mathcal{A}} \to \widetilde{\mathcal{H}(p)}$. 
Let $h_i \in L$ be such that $|h_i| = r_i^{-1}$. 
It can be verified directly from the definition of $\tilde{\mathcal{A}}$ that $\tilde{\mathcal{A}} = \tilde{L}\left[\tilde{T}_1,..,\tilde{T}_n\right]$ where $\tilde{T}_i$ is the image of 
$h_i.T_i$ for the reduction map $\mathcal{A}^{\circ} \to \tilde{\mathcal{A}}$. The $\tilde{L}$-point $\pi_{\mathcal{A}}(x)$ is defined by 
the $\tilde{L}$-morphism $\tilde{\mathcal{A}} \to \tilde{L}$ which maps $\tilde{T}_i \mapsto \widetilde{h_i.a_i}$.  It follows that 
$\pi_{\mathcal{A}}(p) = \pi_{\mathcal{A}}(x)$ if and only if $|(h_i.T_i - h_i.a_i)(p)| < 1$. Hence we have that 
$\pi_{\mathcal{A}}^{-1}(\pi_{\mathcal{A}}(x)) =  \mathbf{B}((a_1,\ldots,a_n), \mathbf{r})$.   
By [\cite{Bos}, Kor 6.2] and 
Proposition 2.3, the sets $\pi_{\mathcal{B}}^{-1}(\tilde{y}_i)$ are connected and open.

\item  
       By definition, $\mathcal{B}^\circ := \{x \in \mathcal{B} | \rho_\mathcal{B}(x) \leq 1\}$ where 
$\rho_\mathcal{B}$ denotes the spectral norm associated to the affinoid algebra $\mathcal{B}$. 
The ring $\mathcal{B}^\circ$ contains the ideal 
$\mathcal{B}^{\circ \circ} := \{x \in \mathcal{B} | \rho_\mathcal{B}(x) < 1\}$ and we denote the quotient 
$\tilde{\mathcal{B}}$. By [\cite{BGR}, 6.3.4/3], $\tilde{\mathcal{B}}$ is a finite type $\tilde{L}$ - algebra. 
 Let $ \{\tilde{F}_1,\ldots,\tilde{F}_t \}$ be a set of generators of $\tilde{\mathcal{B}}$ and let 
 $\mathbf{F} := \{ F_1, \ldots ,F_t\} $ be a set of elements in $\mathcal{B}^\circ$ such that the image of 
 $F_j$ in $\tilde{\mathcal{B}}$ is $\tilde{F}_j$. We can choose the $F_j$ so that they are polynomials in $L[T_1,\ldots,T_n]$. 
Let $y \in X \cap \mathbf{B}((a_1,\ldots,a_n), \mathbf{r})$ be an $L$ -point. Then 
$\tilde{y} := \pi_{\mathcal{B}}(y) \in \tilde{X}$ is an $\tilde{L}$ - point. The point 
$\tilde{y}$ is uniquely defined by a morphism $\tilde{y} : \tilde{\mathcal{B}} \to \tilde{L}$. This morphism is in 
turn determined by its values at the generators
$\tilde{F}_i$. The image of $\tilde{F}_j$ for the morphism $\tilde{y}$ is 
$\widetilde{F_j(y)} \in \tilde{L}$. Let $p \in X$. 
The point  $\pi_\mathcal{B}(p) \in \tilde{X}$ defines a morphism 
$\tilde{\mathcal{B}} \to \widetilde{\mathcal{H}(p)}$. It follows that 
$\pi_\mathcal{B}(p) = \tilde{y}$ if and only if the images of the $\tilde{F}_j$ for the morphism 
defined by $\pi_\mathcal{B}(p)$ are equal to $\widetilde{F_j(y)}$ i.e. if and only if for every $j$, 
$|(F_j - F_j(y))(p)| < 1$. This proves (2).     
\end{enumerate}
 \end{proof}  
 
       Given a finite set of polynomials $\mathbf{F} \subset L[T_1,\ldots,T_n]$ and a point $x \in X(L)$, we define 
       $\mathfrak{V}_X(\mathbf{F},x) := \cap_{F \in \mathbf{F}} D_X(F,x)$ where $D_X(F,x) = \{p \in X | |(F - F(x))(p)| < 1\}$. 
       We now prove the result which we will use in Section 5.  
 We make use of the notation introduced above. 
 Let $D$ be a finite $B$-algebra
which contains $B$ and is also an integral domain. Hence 
$D = A[S_1,\ldots,S_m]/\langle I, J\rangle$ where 
$J$ is an ideal in the polynomial algebra $C := L[T_1,\ldots,T_n,S_1,\ldots,S_m]$. Let 
$U' := \mathrm{Spec}(D)$. 
Let $\phi$ denote the finite surjective morphism $U' \to U$. It induces a finite 
surjective
morphism $\phi^{\mathrm{an}} :  U'^{\mathrm{an}} \to U^{\mathrm{an}}$.  
The strict affinoid space $X$ is an affinoid domain in $U^{\mathrm{an}}$.
By [\cite{berk3}, 2.1.8, 2.1.9], 
the finiteness of the morphism $\phi^{\mathrm{an}}$ implies that
$\mathcal{B}[S_1,...S_m]/J$ is a strict affinoid algebra and 
\begin{align*}
(\phi^{\mathrm{an}})^{-1}(X) = \mathcal{M}(\mathcal{B}[S_1,...S_m]/J).
\end{align*}
 Let $\mathcal{D}$ denote the affinoid algebra
  $\mathcal{B}[S_1,...S_m]/J$. After a suitable change of coordinates we can assume that 
  $\mathcal{M}(\mathcal{D})$ contains the point at the origin of 
  $(\mathrm{Spec}(C))^{\mathrm{an}}$ and in addition that 
 $\phi$ maps the point at the origin of $\mathcal{M}(\mathcal{D})$ to the origin in 
$X = \mathcal{M}(\mathcal{B})$. We hence assume that   
$\mathcal{D}$ is of the form $L\{r_1^{-1}T_1,\ldots,r_n^{-1}T_n,s_1^{-1}S_1,\ldots,s_m^{-1}S_m\}/\langle I,J \rangle$
where the $s_i$ are non negative real numbers. We define $Y := \mathcal{M}(\mathcal{D})$
and 
 $\mathcal{C} = L\{r_1^{-1}T_1,\ldots,r_n^{-1}T_n,s_1^{-1}S_1,\ldots,s_m^{-1}S_m\}$.
       
       Let $x$ be an $L$-point of 
$X$. 
Let $W$ denote the Berkovich open ball around $x$ of 
poly radius $\mathbf{l} := (l_1,\ldots,l_n)$ contained in $\mathcal{M}(\mathcal{A})$ 
where the $l_i$ are positive real numbers less than or equal to $r_i$ which belong to the value group $|L^*|$.

\begin{rem} Lemma 3.1 implies that there exists a finite set of $L$-points $P  \subset X$
and polynomials $\{F_1,\ldots,F_t\} \subset L[T_1,\ldots, T_n]$ such that 
\begin{align} 
 W \cap X = \bigcup_{x \in P} \mathfrak{V}_{X}(\mathbf{F},x).
 \end{align} 
     Although this is not the exact version of Lemma 3.1, we can derive this formulation as follows. 
   Let $\mathcal{B'} := L\{l_1^{-1}T_1,\ldots,l_n^{-1}T_n\}/I$. 
     By Lemma 3.1, we have that 
 \begin{align*}
 W \cap X = \bigcup_{x \in P} \mathfrak{V}_{\mathcal{M}(\mathcal{B}')}(\mathbf{F}',x).
 \end{align*}     
     for some finite set of polynomials $\mathbf{F}'$ and a finite set of $L$-points $P$. For every $l_i$ let $e_i \in L$ be such that $|e_i| = l_i^{-1}$. 
 Extending the set $\mathbf{F}'$ by adding the polynomials $e_iT_i$ will yield equation (2).     
The set $P$ is chosen so that the right hand side of (2) is the disjoint union of open sets.   
\end{rem} 

Proposition $3.3$ below concerns itself with the nature of the 
preimage $(\phi^{\mathrm{an}})^{-1}(W \cap X)$. 
           
 \begin{prop}
    There exists a finite set of 
    polynomials $\mathbf{G} := \{G_1,\ldots,G_{t'}\} \subset L[T_1,\ldots,T_n,S_1,\ldots,S_m]$
    and a finite set of points $Q \subset Y(L)$ 
     such that        
  \begin{enumerate}
 \item    We have the following equality of sets 
\begin{align*}
 (\phi^{\mathrm{an}})^{-1}(W \cap X) = 
  \bigcup_{y \in  Q} \mathfrak{V}_Y(\mathbf{G},y).
\end{align*}  
 \item  The $\{\mathfrak{V}_Y(\mathbf{G},y)\}_{y \in Q}$ are the connected components of the
         space $(\phi^{\mathrm{an}})^{-1}(W \cap X)$. 
 \item When the restriction of the morphism $\phi^{\mathrm{an}}$ to the open set  $(\phi^{\mathrm{an}})^{-1}(W \cap X)$ is an open morphism,
         we can choose $Q$ to be the set $\phi^{-1}(P)$ and for any $y \in \phi^{-1}(P)$, we have that 
         $\phi^{\mathrm{an}}(\mathfrak{V}_Y(\mathbf{G},y)) = \mathfrak{V}_X(\mathbf{F},\phi(y))$. 
  \end{enumerate}
  \end{prop}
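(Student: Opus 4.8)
\emph{Proof strategy.} The plan is to reduce, after recentering at $x$ and rescaling the polyradius, to a situation in which both $W\cap X$ and its preimage are unions of fibres of reduction maps, and then to relate the two families of fibres through the functoriality of the reduction morphism applied to the finite morphism $\phi^{\mathrm{an}}$. Concretely, set
\[
 X_x:=\{\,p\in X\mid |(T_i-a_i)(p)|\le l_i\text{ for all }i\,\},\qquad Y_x:=(\phi^{\mathrm{an}})^{-1}(X_x).
\]
Then $X_x$ is a strict affinoid subdomain of $X$ containing $W\cap X$, and inside $X_x$ the set $W\cap X$ is exactly the open polydisc of full polyradius $\mathbf{l}$ around $x$. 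After the substitution $T_i\mapsto T_i-a_i$ and the rescaling $|h_i|=l_i^{-1}$, the first part of Lemma 3.1 (with $\mathcal{A}$ taken to be $L\{h_1(T_1-a_1),\ldots,h_n(T_n-a_n)\}$ and $\mathcal{B}$ its quotient by $I$) identifies $W\cap X$ with $\pi_{X_x}^{-1}(Z_X)$ for a finite set $Z_X$ of closed points of $\widetilde{X_x}$, with the fibres $\pi_{X_x}^{-1}(\tilde z)$ being precisely the connected components of $W\cap X$; adjoining the polynomials $e_i(T_i-a_i)$, $|e_i|=l_i^{-1}$, exactly as in Remark 3.2 rewrites these fibres in the form $\mathfrak{V}_X(\mathbf{F},x')$, $x'\in P$, so that $\{\mathfrak{V}_X(\mathbf{F},x')\}_{x'\in P}$ are the connected components of $W\cap X$.

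Next I would pull everything back. Since $\phi^{\mathrm{an}}$ is finite, $Y_x$ is a strict affinoid subdomain of $Y$ and $\phi^{\mathrm{an}}$ restricts to a finite surjective morphism $\phi_x\colon Y_x\to X_x$; write $\pi_{Y_x}$ for its reduction map and $\tilde\phi_x\colon\widetilde{Y_x}\to\widetilde{X_x}$ for the induced morphism of $\tilde L$-schemes, which is finite by [\cite{BGR}, 6.3.4/2]. The commutative square $\pi_{X_x}\circ\phi_x=\tilde\phi_x\circ\pi_{Y_x}$ yields
\[
 (\phi^{\mathrm{an}})^{-1}(W\cap X)=\phi_x^{-1}\big(\pi_{X_x}^{-1}(Z_X)\big)=\pi_{Y_x}^{-1}\big(\tilde\phi_x^{-1}(Z_X)\big).
\]
As $\tilde\phi_x$ is finite, $Z_Y:=\tilde\phi_x^{-1}(Z_X)$ is a finite set of closed points of $\widetilde{Y_x}$, and by [\cite{Bos}, Kor 6.2] together with Proposition 2.3 each fibre $\pi_{Y_x}^{-1}(\tilde z)$, $\tilde z\in Z_Y$, is open and connected; distinct fibres being disjoint, they are the connected components of $(\phi^{\mathrm{an}})^{-1}(W\cap X)$, which is assertion (2). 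Applying the second part of Lemma 3.1 to the closed immersion defining $Y_x$ gives a finite set of polynomials in $L[T_1,\ldots,T_n,S_1,\ldots,S_m]$ computing every reduction fibre of $\pi_{Y_x}$; adjoining the polynomials $e_i(T_i-a_i)$ and letting $\mathbf{G}$ be the resulting set, one gets $\mathfrak{V}_Y(\mathbf{G},y)=\pi_{Y_x}^{-1}(\pi_{Y_x}(y))$ for every $y$ lying over $Z_Y$. Choosing one $L$-point in each of the (finitely many, non-empty, open) components and letting $Q$ be the resulting set gives assertion (1).

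For (3), assume $\phi^{\mathrm{an}}$ is open on $(\phi^{\mathrm{an}})^{-1}(W\cap X)$. Each component $\pi_{Y_x}^{-1}(\tilde z)$ is clopen there, so its image is open in $W\cap X$; as $\phi^{\mathrm{an}}$ is finite, hence closed, this image is also closed, hence a union of connected components of $W\cap X$, and being connected it equals a single $\mathfrak{V}_X(\mathbf{F},x')$ with $x'\in P$. Conversely $\mathfrak{V}_X(\mathbf{F},x')$ is the image of the component that maps onto it and it contains $x'$, so $\phi^{-1}(x')$ meets that component; hence $\phi^{-1}(P)$ meets every component of the preimage while remaining inside $(\phi^{\mathrm{an}})^{-1}(W\cap X)$. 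Since $\mathfrak{V}_Y(\mathbf{G},y)=\pi_{Y_x}^{-1}(\pi_{Y_x}(y))$ depends only on the component containing $y$, replacing $Q$ by $\phi^{-1}(P)$ does not change the union in (1), and the identification of images of components just made shows $\phi^{\mathrm{an}}(\mathfrak{V}_Y(\mathbf{G},y))=\mathfrak{V}_X(\mathbf{F},\phi(y))$ for $y\in\phi^{-1}(P)$. The delicate point, and the step I expect to require the most care, is the reduction to the Laurent subdomains $X_x$ and $Y_x$: one must verify that shrinking the polyradius from $\mathbf{r}$ to $\mathbf{l}$ is faithfully recorded by this passage, that $(\phi^{\mathrm{an}})^{-1}(X_x)$ really equals $\{q\in Y\mid |(T_i-a_i)(q)|\le l_i\}$ so that $\phi_x$ is again finite and surjective, and that the square $\pi_{X_x}\circ\phi_x=\tilde\phi_x\circ\pi_{Y_x}$ is legitimately supplied by the functoriality of the reduction map; part (3) then rests only on the elementary fact that a continuous map which is both finite and open sends clopen sets to clopen sets.
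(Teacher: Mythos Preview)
Your proposal is correct and follows essentially the same route as the paper's proof. Both arguments pass to the shrunk strict affinoid pair $X_x=\mathcal{M}(\mathcal{B}')$, $Y_x=\mathcal{M}(\mathcal{D}')$ (the paper first recenters so that $x$ is the origin, you keep $x=(a_1,\dots,a_n)$ explicit), use functoriality of the reduction map to obtain the commutative square and hence the identification $(\phi^{\mathrm{an}})^{-1}(W\cap X)=\pi_{Y_x}^{-1}(\tilde\phi_x^{-1}(Z_X))$, invoke Proposition~2.3 and \cite[Kor~6.2]{Bos} for connectedness of the reduction fibres, apply Lemma~3.1(2) on $Y_x$ to obtain $\mathbf{G}_0$, and finally enlarge $\mathbf{G}_0$ so that the description is valid inside the ambient $Y$ rather than only in $Y_x$. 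For part~(3) both proofs use the identical clopen argument.

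The only visible difference is cosmetic: the paper adjoins to $\mathbf{G}_0$ both the rescaled $T$-coordinates $e_iT_i$ and the rescaled $S$-coordinates $e'_jS_j$, whereas you adjoin only $e_i(T_i-a_i)$. Your choice already suffices: once $|(T_i-a_i)(q)|<l_i$ for all $i$ one has $q\in Y_x$ (this is exactly the ``delicate point'' you flag, and it holds because the $T_i$ are pulled back along $\phi$), and on $Y_x$ the Lemma~3.1(2) polynomials already cut out the reduction fibre, which in turn lies in the open ball of polyradius $(l_1,\dots,l_n,l'_1,\dots,l'_m)$ about $y$ by Lemma~3.1(1); so the extra $S$-conditions the paper adds are redundant. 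In short, the two proofs coincide up to bookkeeping.
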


\begin{proof}
  The Berkovich open ball $W \subset \mathcal{M}(\mathcal{A})$ has poly radius 
$(l_1,\ldots,l_n)$ with  
$l_i \in |L^*|$ for every $i$. 
We will assume without loss of generality that the point $x$ has coordinates $(0,\ldots,0)$.  
Let $\mathcal{B}' := L\{l_1^{-1}T_1,\ldots,l_n^{-1}T_n\}/I$.
Observe that the affinoid space $\mathcal{M}(\mathcal{B}')$ is the intersection of the Berkovich closed disk 
centred at $x$ in $\mathcal{M}(\mathcal{A})$ of poly radius $(l_1,..,l_n)$ and 
$(\mathrm{Spec}(B))^{\mathrm{an}}$. 
Let $\mathcal{D}' := \mathcal{B}'[S_1,..,S_m]/J$. 
By [\cite{berk3}, 2.1.8, 2.1.9], 
$\mathcal{D}'$ is a strict $L$-affinoid algebra since it is a finite $\mathcal{B}'$-algebra.
By definition $\mathcal{D}'$ contains $\mathcal{B}'$.  
Furthermore,
\begin{align*}
   (\phi^{\mathrm{an}})^{-1}(\mathcal{M}(\mathcal{B}')) = \mathcal{M}(\mathcal{D}').
\end{align*}
    As with the affinoid algebra $\mathcal{D}$, we can write
\begin{align*}
\mathcal{D}' = L\{l_1^{-1}T_1,\ldots,l_n^{-1}T_n,{l'}_1^{-1}S_1,\ldots,{l'}_m^{-1}S_m\}/\langle I,J\rangle
\end{align*}
   where the $l'_i$ are non negative real numbers belonging to $|L^*|$. 

Consider the following commutative diagram.  
    
   \setlength{\unitlength}{1cm}
\begin{picture}(10,5)
\put(4,1){$\widetilde{\mathcal{M}(\mathcal{D}')}$}
\put(7,1){$\widetilde{\mathcal{M}(\mathcal{B}')}$}
\put(4,3.5){$\mathcal{M}(\mathcal{D}')$}
\put(7,3.5){$\mathcal{M}(\mathcal{B}')$}
\put(4.4,3.3){\vector(0,-1){1.75}}
\put(7.4,3.3){\vector(0,-1){1.75}}
\put(5.1,1.1){\vector(1,0){1.82}}
\put(5.1,3.6){\vector(1,0){1.82}}
\put(5.8,0.7){$\widetilde{\phi^{\mathrm{an}}}$}
\put(5.6,3.2){$\phi^{\mathrm{an}}$}
\put(4.5,2.3){$\pi_{\mathcal{D}'}$}
\put(7.5,2.3){$\pi_{\mathcal{B}'}$}
\end{picture}   

    The morphism $\mathcal{M}(\mathcal{D}') \to \mathcal{M}(\mathcal{B}')$ is finite. By [\cite{BGR}, Theorem 6.3.4/2], 
    the induced morphism between the associated reductions 
    $\widetilde{\mathcal{M}(\mathcal{D}')} \to \widetilde{\mathcal{M}(\mathcal{B}')}$ is finite as well.
    For every $x_i \in P$, let $\tilde{x}_i$ be the image of $x_i$ for the reduction morphism 
   $\pi_{\mathcal{B}'}$. 
   Let $\tilde{Q} := \{\tilde{z}_1,\ldots,\tilde{z}_v\} \subset \widetilde{\mathcal{M}(\mathcal{D}')}$ be the set of preimages of the set 
   $\{\tilde{x}_i | x_i \in P\}$ for the morphism $\widetilde{\phi^{\mathrm{an}}}$.          
   From the commutative diagram above, we have the following equality 
\begin{align}
   \bigcup_{\tilde{z}_i \in \tilde{Q}}  \pi_{\mathcal{D}'}^{-1}(\tilde{z}_i)  =  
    \bigcup_{x_j \in P}  (\phi^{\mathrm{an}})^{-1}(\pi_{\mathcal{B}'}^{-1}(\pi_{\mathcal{B}'}(x_j))).
\end{align}
  By Proposition 2.3 and [\cite{Bos}, Kor 6.2], the sets $\pi_{\mathcal{D}'}^{-1}(\tilde{z}_i) \subset \mathcal{M}(\mathcal{D}')$
  are connected and open. From the commutative diagram we can also infer the following inequality
  \begin{align}
   \bigcup_{y_i \in \phi^{-1}(P)} \pi_{\mathcal{D}'}^{-1}(\pi_{\mathcal{D}'}(y_i)) \subseteq \bigcup_{\tilde{z}_i \in H}  \pi_{\mathcal{D}'}^{-1}(\tilde{z}_i).
\end{align}
  
  Let $Q$ be a set of $L$ - points of $\mathcal{M}(\mathcal{D'})$ which are in bijection with the set $\tilde{Q}$ via the reduction morphism 
  $\pi_{\mathcal{D}'}$. 
  By Remark 3.2, Lemma 3.1 and equation (3) there exists a finite set of polynomials $\mathbf{F} \subset L[T_1,\ldots,T_n]$ and 
$\mathbf{G}_0 \subset L[T_1,\ldots,T_n,S_1,\ldots,S_m]$ such that 
\begin{align*} 
   W \cap X = \bigcup_{x \in P} \mathfrak{V}_{\mathcal{M}(\mathcal{B}')}(\mathbf{F},x) = \bigcup_{x \in P} \mathfrak{V}_{X}(\mathbf{F},x)
\end{align*}   
and 
\begin{align*} 
  (\phi^{\mathrm{an}})^{-1} (W \cap X) = \bigcup_{z \in Q} \mathfrak{V}_{\mathcal{M}(\mathcal{D}')}(\mathbf{G}_0,z).
\end{align*}
   To complete the proof of the proposition, we enlarge the set $G_0$ as follows. Recall that in Remark 3.2, we chose 
   $e_i \in L$ such that $|e_i| = l_i^{-1}$. Likewise let $|e'_i| = {l'}_i^{-1}$. Such elements exists as we had 
   assumed that $l_i$ and $l'_i$ belong to $|L^*|$. Let $T'_i := e_iT_i$ and $S'_i := e'_iS_i$. 
   Observe that since
   $\mathfrak{V}_{\mathcal{M}(\mathcal{D}')}(\mathbf{G}_0,y) = \pi_{\mathcal{D}'}^{-1}(\pi_{\mathcal{D}'}(y))$, 
   by Lemma 3.1(1)  
   we must have that $\mathfrak{V}_{\mathcal{M}(\mathcal{D}')}(\mathbf{G}_0,y) \subseteq B(y,(l_1,\ldots,l_n,l'_1,\ldots,l'_m))$
   where $B(y,(l_1,\ldots,l_n,l'_1,\ldots,l'_m))$ is the Berkovich open ball in $\mathcal{M}(\mathcal{C})$ around $y$ of poly radius 
   $(l_1,\ldots,l_n,l'_1,\ldots,l'_m)$. It follows that 
   \begin{align*}
   \mathfrak{V}_{\mathcal{M}(\mathcal{D}')}(\mathbf{G}_0,y) = \mathfrak{V}_{Y}(\mathbf{G},y).
   \end{align*} 
       This concludes parts (1) and (2) of the proposition. 
   
    We now show that the inequality (4) above which is 
    \begin{align*}
   \bigcup_{y_i \in \phi^{-1}(P)} \pi_{\mathcal{D}'}^{-1}(\pi_{\mathcal{D}'}(y_i)) \subseteq \bigcup_{\tilde{z}_i \in H}  \pi_{\mathcal{D}'}^{-1}(\tilde{z}_i)
\end{align*}
    is in fact an equality when the restriction $\phi^{\mathrm{an}}$ to 
   $(\phi^{\mathrm{an}})^{-1}(W \cap X)$ is an open morphism. 
   In this case, we claim that if 
   $\tilde{z}_i \in \tilde{Q}$ then there exists $x_j \in P$ such that $\phi^{\mathrm{an}}$ restricts to a 
   surjection from
   $\pi_{\mathcal{D}'}^{-1}(\tilde{z}_i)$ onto 
   $\pi_{\mathcal{B}'}^{-1}(\pi_{\mathcal{B}'}(x_j))$. The morphism 
   $\phi^{\mathrm{an}}$ restricts to a morphism between $\pi_{\mathcal{D}'}^{-1}(\tilde{z}_i)$
   and the disjoint union of open sets - $\bigcup_{x \in P} \pi_{\mathcal{B}'}^{-1}(\pi_{\mathcal{B}'}(x))$. The connectedness of $\pi_{\mathcal{D}'}^{-1}(\tilde{z}_i)$ implies 
   that there exists $x_j \in P$ such that the image $\phi^{\mathrm{an}}((\pi_{\mathcal{D}'})^{-1}(\tilde{z}_i))$ is contained in 
   $\pi_{\mathcal{B}'}^{-1}(\pi_{\mathcal{B}'}(x_j))$. Also,
   the restriction $\phi^{\mathrm{an}} : \mathcal{M}(\mathcal{D'}) \to \mathcal{M}(\mathcal{B'})$ is closed as it is a finite morphism.
    The set $\pi_{\mathcal{D}'}^{-1}(\tilde{z}_i)$ is both open and closed in 
   $\bigcup_{\tilde{z}_i \in \tilde{Q}}  \pi_{\mathcal{D}'}^{-1}(\tilde{z}_i)$.
   As $\pi_{\mathcal{B}'}^{-1}(\pi_{\mathcal{B}'}(x_j))$ is connected, we must have that 
    $\phi^{\mathrm{an}}$ restricts to a 
   surjection from
   $\pi_{\mathcal{D}'}^{-1}(\tilde{z}_i)$ onto 
   $\pi_{\mathcal{B}'}^{-1}(\pi_{\mathcal{B}'}(x_j))$. 
    It follows that there exists $y \in \phi^{-1}(P)$ such that $y \in \pi_{\mathcal{D}'}^{-1}(\tilde{z}_i)$ from which we get the equality   
    \begin{align*}
   \bigcup_{y_i \in \phi^{-1}(P)} \pi_{\mathcal{D}'}^{-1}(\pi_{\mathcal{D}'}(y_i)) = \bigcup_{\tilde{z}_i \in \tilde{Q}}  \pi_{\mathcal{D}'}^{-1}(\tilde{z}_i).
\end{align*}
   This proves part (3) of the proposition. 
\end{proof}

\section{The theorem for $\widehat{V}$} 

       We now reinterpret Theorem 1.7 for the spaces $\widehat{V}$ discussed in Section 2.3.      
  Let $V'$ and $V$ be integral projective $k$ - varieties such that $V$ is normal and let 
  $\phi : V' \to V$ be a finite surjective morphism. The morphism $\phi$ induces a pro - definable map 
  $\widehat{\phi} : \widehat{V}' \to \widehat{V}$. We write the structure of the value group additively in this section. 
  
         As before, we fix an embedding 
  $V \hookrightarrow \mathbb{P}^n_k$ and an affine chart of $\mathbb{P}^n_k$.    
  We will regard the spaces $V'$, $V$ and $\mathbb{P}^n$ as $k$ - definable sets in ACVF. 
  As in Section 2, we fix $\mathbb{U}$ -  a very large saturated model of ACVF and assume that every model of interest to us 
 is a small sub structure of $\mathbb{U}$.  
 
 One of the advantages of working in the model theoretic setting is that we need no longer concern ourself
 with the process of extending scalars.
 In Section 2.1 of \cite{loes} a brief discussion concerning definable sets is given. We reproduce a part of that discussion here.  
  Let $\sigma$ be a formula in ACVF with parameters contained in a structure $C$. 
 The formula $\sigma$ defines a functor from the category of models and elementary embeddings of ACVF which contain $C$ to the 
 category of sets i.e. given a model $M$ of ACVF which contains $C$, the functor $Z_{\sigma}$ associates $M$ to the 
 set $Z_{\sigma}(M) := \{a \in M | M \models \sigma(a)\}$. The functor  $Z_{\sigma}$ is completely determined by the large set 
 $Z_{\sigma}(\mathbb{U})$. The set of $L$ - points of $V \times_k L$ in the algebraic sense is the set $V(L)$ 
 (in the model theoretic sense)
 where the latter is 
 not to be confused with the scheme theoretic notion $\mathrm{Hom}_k(\mathrm{Spec}(L),V)$.
 The points which are not closed in the variety $V$ correspond to $k$ - types which concentrate on $V$.       
 
 As in 2.6, the definable set $\mathbb{P}^n$ can be realised as the union of $n + 1$ 
 closed disks $A_i^0$ which are glued together definably. Each 
 $A_i^0$ is a $0$ - definable sub set of $\mathbb{A}^n$ and comes equipped with definable functions 
 $T_j/T_i : A_i^0 \to VF$ for $j \in \{1,\ldots,n+1\}$ and where $VF$ denotes the value field sort. 
These functions define the coordinates of the points of $A_i^0$. In 2.2, we used these functions to define 
 Berkovich open balls around points. We repeat that procedure to define for every 
 $x \in \mathbb{P}^n$, a family of definable sets $\mathcal{O}_x^0$ which are $v + g$ open neighbourhoods of 
 $x$.  The notion of a $v + g$ topology was introduced in [\cite{loes}, Section 3.7]. Explicitly,
 if $O \in  \mathcal{O}_x^0$ and $x \in A_i^0$ for some $i$ then 
 $O$ is defined by the formula $\{x \in \mathbb{P}^n | val(T_j/T_i - T_j/T_i(x)) > r_j \}$ where 
 $r_j \in \Gamma$, $r_j \geq 0$ and $val$ denotes the valuation $VF \to \Gamma_{\infty}$. It can be checked that the family $\mathcal{O}_x^0$ defined in 
 this way is independent of the $A^0_i$ chosen. In addition, we have as before a function $h : \mathcal{O}_x^0 \to [0,\infty]^{{n+1}^2}$ which defines the 
 poly radii of elements of $\mathcal{O}_x^0$. Precisely, if $x \in A_i^0$ and 
 $O \in \mathcal{O}_x^0$ then $O$ is uniquely defined by its poly radius $\mathbf{r}_i := (r_1,\ldots,r_{n+1})$ where 
 we set $r_i = 0$. If $x \notin A_i^0$ then we set $\mathbf{r}_i := (0,\ldots,0)$. 
 We define $h(O) := (\mathbf{r}_i)_i$. Observe that if $x$ and $h(O)$ are defined over a model $M$ of ACVF then 
 $\widehat{O}$ is an open pro - $M$ definable subspace of $\widehat{\mathbb{P}}^n$.
By definition the function $h$ extends to a map $h : \mathcal{O}^0 := \bigcup_{x \in \mathbb{P}^n} \mathcal{O}^0_x \to  [0,\infty]^{{n+1}^2}$. 
Observe that for $x \in \mathbb{P}^n(L)$ where $L$ is a non-Archimedean real valued extension of $k$ and 
$O \in \mathcal{O}^0_x(\mathbb{R}_{\infty})$, we have that $h(O) = \mathrm{log}(h_L(O^{\mathrm{an}}))$ (Remark 1.3, Section 2.2). 
  Let $R \subset \Gamma_{\infty}^{{n+1}^2} \times V$ be defined 
 by those pairs $(\mathbf{r},x)$ for which there exists an element $O \in \mathcal{O}_x^0$ such that 
 $h(O) = \mathbf{r}$. The set $R$ is $k$ - definable. This can be shown by an explicit calculation similar to what was done in Section 2.2.1for 
 $\mathbb{P}_k^{1}$.

\begin{rem}   In Remark 1.5, we introduced a collection $S$ of functions from $\mathbb{R}_{>0}^{(n+1)^2}$ to $\mathbb{R}_{ > 0}$ an element 
   of which extends to a function $\mathbb{R}_{\geq 0}^{(n+1)^2} \to \mathbb{R}_{ \geq 0}$ naturally. When writing the value group 
   additively, we adapt the family $S$ as follows. Firstly, $\mathrm{log} : (\mathbb{R}_{> 0}, \times) \to (\mathbb{R}, +)$ (Remark 1.3) 
   is an isomorphism of abelian groups which reverses the ordering and whose inverse is 
   the function $\mathrm{exp} : (\mathbb{R},+) \to (\mathbb{R}_{> 0}, \times)$
    which maps $x \mapsto c^x$. If $g \in S$, we define $g' : \mathbb{R}^{{n+1}^2} \to \mathbb{R}$. 
   Let $\mathbf{r} = (r_{i,j})_{i,j} \in \mathbb{R}^{{n+1}^2}$. It follows that $c^{\mathbf{r}} \in \mathbb{R}_{>0}^{(n+1)^2}$ where 
   $c^{\mathbf{r}} := (c^{r_{i,j}})_{i,j}$. Let $g'(\mathbf{r}) := \mathrm{log}(g(c^{\mathbf{r}}))$. The properties of the function $g \in S$ imply the following. 
   
\begin{enumerate}
\item The function $g'$
 is continuous with respect to the topology induced by the ordering. 
 \item If $(r_{i,j})_{i,j}$ and $(s_{i,j})_{i,j}$ are $(n+1)^2$-tuples in $\mathbb{R}^{(n+1)^2}$  such that 
$r_{i,j} \leq s_{i,j}$ then $g'((r_{i,j})_{i,j}) \leq g'((s_{i,j})_{i,j})$.
\item $g$ is a definable function in the language of Ordered Abelian groups.
 \end{enumerate}
     As in Remark 1.5, we extend the function $g'$ so that it defines a function 
    $\Gamma_{\infty}^{{n+1}^2} \to \Gamma_{\infty}$. 
 
 \end{rem}
 
     Let
  $g \in S$. As in Section 2.3, given a real valued model $F$ of ACVF, let $\mathbf{F}$ denote the structure defined by the pair 
  $(F,\mathbb{R}_{\infty})$. 
  As in Section 2.2, the function 
 $g'$ induces an ordering on the set $\mathcal{O}^0(\mathbf{F}) := \bigcup_{x \in \mathbb{P}^n(F)} \mathcal{O}^0_x(\mathbf{F})$ where 
  $\mathcal{O}^0_x(\mathbf{F})$ are those elements of  $\mathcal{O}^0_x$ which are defined over $\mathbf{F}$. 
  More precisely, as in Lemma 2.8, 
  the function 
$g' \circ h : \mathcal{O}^0(\mathbf{F}) \to \mathbb{R}_{\infty}$ has the following property.  
 If 
$O_1, O_2 \in \mathcal{O}^0(\mathbf{F})$ such that $O_1 \subseteq O_2$ then     
$(g' \circ h)(O_1) \geq (g' \circ h)(O_2)$. 
The inequality above has been reversed owing to the fact that $h(O_i) = \mathrm{log}(h_F(O^{\mathrm{an}}_i))$. 
The functions $g \in S$ hence allow us to quantify the size of elements belonging to $\mathcal{O}^0$. 

\begin{lem}
  Let $d$ denote the separable degree of the finite morphism $\phi : V' \to V$. 
  For $p \in \widehat{V}$, the cardinality of the set of preimages $\widehat{\phi}^{-1}(p)$ is bounded above by $d$ 
  and the set of simple points $x$ in $V$ for which $\mathrm{card}(\phi^{-1}(x)) = d$ is dense in $\widehat{V}$.   
\end{lem}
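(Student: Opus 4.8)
The plan is to establish the two assertions separately: the bound $\mathrm{card}(\widehat{\phi}^{-1}(p)) \le d$ by a type-counting argument over a suitable model of ACVF, and the density statement by producing, inside every nonempty basic open subset of $\widehat{V}$, a simple point lying over the locus where $\phi$ has exactly $d$ geometric preimages.

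For the bound I would fix $p$ together with any finite subfamily $\{q_1,\dots,q_m\}\subseteq\widehat{\phi}^{-1}(p)$ and show $m\le d$; since $m$ is arbitrary this gives the bound (and the finiteness) at once. First pass to a small model $M$ of ACVF over which $\phi,V,V'$ are defined and $p,q_1,\dots,q_m$ are $M$-definable. Over a model a stably dominated, hence definable, type is recovered from its restriction to that model, so $q\mapsto q|_M$ is injective on $\widehat{\phi}^{-1}(p)$. Now take $a\models p|_M$ in $\mathbb{U}$ and write $\phi^{-1}(a)=\{b^{(1)},\dots,b^{(s)}\}\subseteq V'(\mathbb{U})$; here $s$ is the number of geometric points of the fibre of $\phi$ over the scheme point underlying $a$. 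The number of geometric points of the fibres of a finite morphism of integral varieties is a lower semicontinuous function of the base point whose generic value is the separable degree $d$, so $s\le d$. Finally, for each $i$ pick $b_i\models q_i|_M$ and set $a_i:=\phi(b_i)$; then $a_i$ realizes $(\phi_*q_i)|_M=p|_M$, so an $M$-automorphism of $\mathbb{U}$ carries $a_i$ to $a$ and hence $b_i$ to some $b^{(j(i))}$, whence $q_i|_M=\mathrm{tp}(b_i/M)=\mathrm{tp}(b^{(j(i))}/M)$. The injective map $i\mapsto q_i|_M$ therefore takes values in the set $\{\mathrm{tp}(b^{(1)}/M),\dots,\mathrm{tp}(b^{(s)}/M)\}$, so $m\le s\le d$.

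For the density statement, let $U\subseteq V$ be the locus of smooth points of $V$ over which the fibre of $\phi$ has exactly $d$ geometric points; by the semicontinuity just used $U$ is Zariski-open, and it is dense since it contains the generic point of $V$. Each $x\in U(\mathbb{U})$ is a simple point with $\mathrm{card}(\phi^{-1}(x))=d$, so it suffices to meet every nonempty basic open set $\mathcal{W}=\{q\in\widehat{O}: \mathrm{val}(f_j)_*(q)\in W_j,\ j=1,\dots,N\}$ of $\widehat{V}$. Choosing $p\in\mathcal{W}$ and a realization $a\models p|_M$ shows the set $D:=\{a'\in O(\mathbb{U}): \mathrm{val}(f_j(a'))\in W_j\ \text{for every }j\}$ is nonempty. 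I would then check that $D$ is Zariski-dense in $O$: were $D$ contained in a hypersurface $Z(h)$, then continuity of the maps $a'\mapsto\mathrm{val}(f_j(a'))$ for the valuation topology, openness of the $W_j$, and density of $\{h\neq 0\}$ for that topology would together yield a point of $D$ off $Z(h)$, a contradiction. Consequently $D$ meets the dense Zariski-open subset $U\cap O$, and any $a'\in D\cap U(\mathbb{U})$ is a simple point in $\mathcal{W}$ with $\mathrm{card}(\phi^{-1}(a'))=d$; as the basic open sets form a basis of $\widehat{V}$, this proves density.

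I expect the type-counting identification in the second paragraph to be the crux: matching the pro-definable fibre $\widehat{\phi}^{-1}(p)$ with a subset of the ordinary geometric fibre of $\phi$ over a generic realization of $p$. This needs care about the choice of base model, about recovering a definable type over a model from its restriction, and about the compatibility of the pushforward $\phi_*$ of stably dominated types with honest evaluation of $\phi$. The remaining ingredients are a standard semicontinuity property of finite morphisms and the elementary observation that a realization of a type concentrating on an open subset of $\widehat{V}$ is itself a simple point of that subset.
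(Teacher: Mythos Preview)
Your argument for the bound $\mathrm{card}(\widehat{\phi}^{-1}(p)) \le d$ follows the same line as the paper's: pass to a model $M$ over which $p$ is definable, realize $p|_M$, and identify the stably dominated extensions of $p$ with a subset of the types of the finitely many points in the algebraic fibre over that realization. Your automorphism step simply makes explicit what the paper leaves as a one-line remark.

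For the density assertion the two approaches diverge. The paper invokes its Lemma~4.8 to factor $\phi$ (on a Zariski-open piece) as a purely inseparable map followed by a separable map $\phi_2$, takes $U_0$ to be the \'etale locus of $\phi_2$, and then shows $U_0$ is dense in $\widehat{V}$ by reducing, via an arbitrary nonconstant regular function $f$ and a compactification/flatness trick, to the elementary fact that $\{\mathrm{val}(x): x\in W\}$ is dense in $\Gamma_\infty$ for any cofinite $W\subset \mathbb{A}^1$. Your route is more direct: you obtain the open locus $U$ straight from semicontinuity of the fibre cardinality, and you meet an arbitrary basic open of $\widehat{V}$ by observing that the corresponding valuation-open subset $D$ of $O(\mathbb{U})$ is Zariski-dense and hence hits $U$. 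The payoff is that you avoid both the factorisation lemma and the reduction to $\mathbb{A}^1$. The cost is that your argument rests on the assertion that $\{h\ne 0\}$ is dense for the valuation topology on $O(\mathbb{U})$ whenever $h$ is a nonzero regular function on the irreducible open set $O$; this is true (equivalently, a proper Zariski-closed subset of an irreducible variety has empty valuation-interior), but it is exactly the kind of statement the paper's $\mathbb{A}^1$-reduction is designed to render explicit, so you should either cite it or add a one-line justification rather than fold it into the contradiction step.
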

   \begin{proof} 
      Let $M$ be a model of ACVF which contains $k$ and $x \in V(M)$. The point $x$ defines a closed point of the variety $V \times_k M$. 
     From algebraic geometry, the cardinality of fibre $\phi_M^{-1}(x)$ is bounded above by $d$ where $\phi_M : V' \times_k M \to V \times_k M$.  
   From our discussion above on definable sets, we conclude that the cardinality of $\phi^{-1}(x)$ is also bounded
    above by $d$ when $\phi$ is viewed as a definable map between the definable sets $V'$ and $V$.   
  
   Let $p \in \widehat{V}$. By definition, $p$ is a stably dominated type which concentrates on $V$. 
   Let us assume that it is defined over a model $M$ of ACVF which contains $k$. Let $a$ be a realization of the 
   $M$ - type $p_{|M}$. Our discussion above implies that there exists $\{a'_1,\ldots,a'_t\} \subset V'$ such that 
   $\phi^{-1}(a) :=  \{a'_1,\ldots,a'_t\}$ and $t \leq d$. As the function $\phi$ is definable over $k$, we must have that the preimage of the 
   $M$ - type $p_{|M}$ extends to at most $t$ $M$ - types which concentrate on $V'$ and hence at most $t$ stably dominated types over $M$. 
  
    We now verify the remainder of the lemma. 
    By Lemma 4.8, there exists affine open sets $U \subset V$, $U' \subset V'$ and an affine scheme $U''$ along 
  with morphisms $\phi_1 : U' \to U''$ purely inseparable and $\phi_2 : U'' \to U$ separable of degree $d$ such that
  the restriction of the morphism $\phi$ to $U'$ factors as $\phi_2 \circ \phi_1$.  There 
  exists an open sub scheme $U_0 \subset U$ over which the morphism $\phi_2$ is etale and 
  since $k$ is algebraically closed, the cardinality of the set $\phi_2^{-1}(y)$ for $y \in U_0$
  is equal to $d$. By [\cite{hart}, 3.15], the scheme $U \times_k L$ is irreducible for any field extension $L$ of $k$. 
  Hence $U_0 \times_k L$ is Zariski dense in $U \times_k L$. 
  
  We claim that $U_0$ is dense in $\widehat{V}$. 
  Indeed, let $f$ be a non constant regular function of some affine open subspace $W$ of $V$ and 
  $E$ some open set in $\Gamma_{\infty}$. From our discussion above, the definable set $U_0 \cap W$ is not empty and 
  $f$ is regular when restricted to $U_0 \cap W$ and non constant.   
  The function $f : U_0 \cap W \to \mathbb{A}^1$ is dominant for the Zariski topology. Let $\bar{f} : \bar{U}_0 \to \mathbb{P}^1$ be a compactification of 
  the morphism $f$. There exists an open set $W_0 \subset \mathbb{A}^1$ over which $\bar{f}$ is flat and hence open. It follows that the 
  image of the map $f$ contains a Zariski open subset of $\mathbb{A}^1$. 
     We hence reduce to showing that 
  if $W \subset \mathbb{A}^1$ is a Zariski dense open subset, then the set $\{val(x) | x \in W\} \subset \Gamma_{\infty}$ is 
  dense for the linear topology. This can be easily verified. 
    Hence we have that the open set $\widehat{O} \cap \widehat{V}$ contains at least 
 one point which belongs to $U_0$. This proves the claim and concludes the proof.

   \end{proof} 
 
       For $O \in \mathcal{O}^0_x$ with $h(O) = \mathbf{r}$, let $N_{V}(\mathbf{r},x)$ denote the 
       number of connected components of the space $\widehat{O} \cap \widehat{V}$ and 
       $N_{V'}(\mathbf{r},x)$ denote the number of connected components of the 
       space $\widehat{\phi}^{-1}(\widehat{V} \cap \widehat{O})$.

\begin{lem}
   Let $d$ denote the separable degree of the morphism $\phi : V' \to V$. Let $x \in V$ and 
   $O \in \mathcal{O}^0_x$. Let $\mathbf{r} := h(O)$. The preimage $\widehat{\phi}^{-1}(\widehat{O} \cap \widehat{V})$ is the disjoint union of 
   $\widehat{V}'$ open sets each of which is homeomorphic to $\widehat{O} \cap \widehat{V}$ via $\widehat{\phi}$ if and only if 
   $N_{V'}(\mathbf{r},x) = d.N_{V}(\mathbf{r},x)$. Furthermore, if $\widehat{\phi}^{-1}(\widehat{O} \cap \widehat{V})$ is the disjoint union of 
   $\widehat{V}'$ open sets each of which is homeomorphic to $\widehat{O} \cap \widehat{V}$ via $\widehat{\phi}$ then 
   the cardinality of the number of preimages of a point in $\widehat{O} \cap \widehat{V}$ is $d$. 
\end{lem}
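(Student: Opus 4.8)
The plan is to reduce both implications to a counting argument resting on three ingredients. \textbf{(a)} The map $\widehat{\phi}$ is continuous, surjective, has finite fibres of cardinality at most $d$, and — crucially — is \emph{open}. The bound on the fibres (and the identification, for a simple closed point $x$, of $\widehat{\phi}^{-1}(x)$ with $\phi^{-1}(x)$) is the preceding lemma; openness is where the hypothesis that $V$ is normal enters: since $V$ is normal and $V'$ integral, going-down holds for $\phi$, so for every Zariski open $U_0\subseteq V'$ the constructible set $\phi(U_0)$ (Chevalley) is stable under generization, hence open, so $\phi$ is open, and therefore so are $\phi^{\mathrm{an}}$ and $\widehat{\phi}$ — equivalently, the openness hypothesis appearing in Proposition 3.3, part (3), is automatically satisfied here. \textbf{(b)} The set of $p\in\widehat{V}$ with $\mathrm{card}\,\widehat{\phi}^{-1}(p)=d$ is dense in $\widehat{V}$; this is again the preceding lemma. \textbf{(c)} Working inside an affine chart $A^0_i$ containing $x$ (so that $O\subseteq A^0_i$), the $\widehat{V}$-analogue of Proposition 3.3 applies to $U:=\widehat{O}\cap\widehat{V}$ and $U':=\widehat{\phi}^{-1}(U)$: both have finitely many connected components, all open, and by (a) each connected component of $U'$ is mapped by $\widehat{\phi}$ \emph{onto} a connected component of $U$.

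Note that $x\in U$, so $U$ is a nonempty open subset of $\widehat{V}$; write $N_V,N_{V'}$ for the numbers of connected components of $U,U'$. For the implication ``decomposition $\Rightarrow N_{V'}=d\cdot N_V$'' together with the final assertion: if $U'=\bigsqcup_j V_j$ with each $V_j$ open and $\widehat{\phi}$ restricting to a homeomorphism $V_j\to U$, then each $V_j$ is clopen in $U'$, so $N_{V'}$ equals the number of $V_j$'s times $N_V$; moreover every $p\in U$ has exactly one preimage in each $V_j$, so the number of $V_j$'s equals $\mathrm{card}\,\widehat{\phi}^{-1}(p)$ for every $p\in U$, and choosing $p\in U$ in the dense set of (b) forces that number to be $d$. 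This yields $N_{V'}=d\cdot N_V$ and shows that every point of $U$ has exactly $d$ preimages.

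For the converse, assume $N_{V'}=d\cdot N_V$, and let $U_1,\dots,U_m$ (with $m=N_V$) and $U'_1,\dots,U'_M$ (with $M=dm$) be the connected components of $U$ and of $U'$; all are open, and each $U'_k$ is clopen in $U'$. By (c), $\widehat{\phi}(U'_k)=U_{i(k)}$ for some $i(k)\in\{1,\dots,m\}$; set $M_i:=\#\{k:i(k)=i\}$, so $\sum_i M_i=dm$. For any $p\in U_i$ the fibre $\widehat{\phi}^{-1}(p)$ meets each of the $M_i$ components $U'_k$ with $i(k)=i$, hence has at least $M_i$ points; choosing $p$ in the dense set of (b) gives $M_i\le d$, and since $\sum_i M_i=dm$ with $m$ summands each at most $d$, every $M_i=d$. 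Consequently $\mathrm{card}\,\widehat{\phi}^{-1}(p)=d$ for all $p\in U$, so each $U'_k$ contains exactly one preimage of each point of $U_{i(k)}$; thus $\widehat{\phi}|_{U'_k}\colon U'_k\to U_{i(k)}$ is a continuous bijection, and being the restriction of the open map $\widehat{\phi}$ to the open set $U'_k$ it is a homeomorphism. Finally, for each $i$ the set $\widehat{\phi}^{-1}(U_i)$ is the disjoint union of its $M_i=d$ components, which we label $W_{i,1},\dots,W_{i,d}$; setting $V_j:=\bigsqcup_i W_{i,j}$ for $j=1,\dots,d$ produces open sets with $U'=\bigsqcup_{j=1}^d V_j$ and $\widehat{\phi}$ restricting to a homeomorphism $V_j\to\bigsqcup_i U_i=U$. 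This is the required decomposition.

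The main obstacle is ingredient (a), the openness of $\widehat{\phi}$: without it a connected component of $U'$ need only map onto a proper closed connected subset of a component of $U$, and the crucial inequality $M_i\le d$ collapses. This is precisely the step at which the normality of $V$ is indispensable, via going-down for $\phi$ and the preservation of openness under analytification and under $V\mapsto\widehat{V}$.
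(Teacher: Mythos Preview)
Your counting argument is the same as the paper's, and both directions go through once the ingredients are in place. Two items need tightening, however.

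First, in (a) you record only that $\widehat{\phi}$ is open, but the assertion in (c) that each connected component $U'_k$ maps \emph{onto} a full component $U_{i(k)}$ also needs $\widehat{\phi}$ to be closed: openness makes $\widehat{\phi}(U'_k)$ open in $U_{i(k)}$, closedness of the restriction $\widehat{\phi}|_{U'}:U'\to U$ (which follows because $U'=\widehat{\phi}^{-1}(U)$ is saturated) makes it closed, and connectedness of $U_{i(k)}$ then forces equality. The paper invokes both properties separately --- openness from normality of $V$ via \cite[Corollary~8.7.2]{loes}, closedness from finiteness of $\phi$ via \cite[Lemma~4.2.24]{loes} --- and then argues with the ``clopen'' map. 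Your closing paragraph suggests the image is automatically closed and that openness alone is the obstacle; that is not quite right.

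Second, the appeal to ``the $\widehat{V}$-analogue of Proposition~3.3'' for finiteness of the connected components is misplaced: Proposition~3.3 concerns Berkovich affinoids and the reduction map, and does not transfer directly to the spaces $\widehat{V}$. The paper instead applies the Hrushovski--Loeser retraction theorem (Theorem~2.11 here, \cite[Theorem~10.1.1]{loes}) to retract $\widehat{O}\cap\widehat{V}$ onto a $\Gamma$-internal set definably homeomorphic to a definable subset of some $\Gamma_\infty^m$; such a set has finitely many connected components, all open, and the retraction transports this back. The same device handles $\widehat{\phi}^{-1}(\widehat{O}\cap\widehat{V})$.
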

\begin{proof}
As the variety $V$ is normal, by 
 Corollary 8.7.2 in \cite{loes}, the morphism $\widehat{\phi} : \widehat{V}' \to \widehat{V}$ is open. 
 By Lemma 4.2.24 in loc.cit, the map $\widehat{\phi}$ is closed as well. 
Let $(\mathbf{r},x) \in R$ and $O \in \mathcal{O}_x^0$ such that $h(O) = \mathbf{r}$.
By 10.1.1, there exists a continuous deformation retraction $H : I \times \widehat{O} \cap \widehat{V} \to \widehat{O} \cap \widehat{V}$
such that the image $H(e,\widehat{O} \cap \widehat{V})$ is a $\Gamma$ - internal subset of $\widehat{O} \cap \widehat{V}$ which 
is definably homeomorphic to a definable subset $\Lambda$ in $\Gamma_{\infty}^m$ for some $m \in \mathbb{N}$. 
The connected components of $\Lambda$ are open and there are only finitely many of these. It follows that 
$\widehat{O} \cap \widehat{V}$ is the finite disjoint union of path connected open sets.  
Let $\{C_1,\ldots,C_t\}$ be the connected components of  $\widehat{O} \cap \widehat{V}$. By 
a similar argument, $\widehat{\phi}^{-1}(\widehat{O} \cap \widehat{V})$ is the disjoint union of a finite number of open 
sets each of which are path connected. Let $\{C'_1,\ldots,C'_{t'}\}$ denote the connected components of $\widehat{\phi}^{-1}(\widehat{O} \cap \widehat{V})$. 
As the morphism $\widehat{\phi}$ is clopen when restricted $\widehat{\phi}^{-1}(\widehat{O} \cap \widehat{V})$, we see that for every $j$ there exists a unique $i$ such that  
$\widehat{\phi}$ maps $C'_j$ surjectively onto $C_i$. Hence 
the preimage $\widehat{\phi}^{-1}(\widehat{O} \cap \widehat{V})$ is the disjoint union of 
   $\widehat{V}'$ open sets each of which is homeomorphic to $\widehat{O} \cap \widehat{V}$ via $\widehat{\phi}$ if and only if 
  for every $j$ there exists a unique $i$ such that $\widehat{\phi}$ restricts to a bijection 
  from $C'_j$ onto $C_i$ and the number of preimages of an element $p \in \widehat{O} \cap \widehat{V}$ is constant. 
  By Lemma 4.2, this constant must be $d$ as the set of elements $p \in \widehat{V}$ for which 
  $\mathrm{card}(\widehat{\phi}^{-1}(p)) = d$ is dense in $\widehat{V}$.
  Since by Lemma 4.2, for $p \in \widehat{V}$ the set $\widehat{\phi}^{-1}(p)$ has cardinality bounded by $d$  
   it follows that 
  the preimage $\widehat{\phi}^{-1}(\widehat{O} \cap \widehat{V})$ is the disjoint union of 
   $\widehat{V}'$ open sets each of which is homeomorphic to $\widehat{O} \cap \widehat{V}$ via $\widehat{\phi}$ if and only if $t' = dt$ i.e 
   $N_{V'}(\mathbf{r},x) = dN_{V}(\mathbf{r},x)$. 
  
\end{proof} 

     Te proof above can be adapted to show the following result. 
\begin{lem}
Let $M$ be a model of ACVF.  
       Let $(\mathbf{r},x) \in R(M)$ and $O \in \mathcal{O}^0_x$ such that $h(O) = \mathbf{r}$. 
        Let $N^M_{V'}(\mathbf{r},x)$ denote the number of connected components of the
       space $(\widehat{\phi}^{-1})(\widehat{O}(M) \cap \widehat{V}(M))$ and 
       $N^M_V(\mathbf{r},x)$ be the number of connected components of the space 
       $\widehat{O}(M) \cap \widehat{V}(M)$. The preimage $\widehat{\phi}^{-1}(\widehat{O}(M) \cap \widehat{V}(M))$ is the disjoint union of 
   $\widehat{V}'(M)$ open sets each of which is homeomorphic to $\widehat{O}(M) \cap \widehat{V}(M)$ via $\widehat{\phi}(M)$ if and only if 
   $N^M_{V'}(\mathbf{r},x) = d.N^M_{V}(\mathbf{r},x)$. 
\end{lem}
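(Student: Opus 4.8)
The plan is to repeat the proof of Lemma 4.4 almost verbatim, attaching ``$(M)$'' to every space and keeping track of the two places where the inputs must be available at the level of $M$-points rather than of $\widehat{V}$. First I would record that $\widehat{O}\cap\widehat{V}$ and $\widehat{\phi}$ are defined over $M$: the varieties $V,V'$ are $k$-definable, and since $(\mathbf{r},x)\in R(M)$ the open set $O\in\mathcal{O}^0_x$ with $h(O)=\mathbf{r}$ is $M$-definable. Hence [\cite{loes}, 10.1.1] applies over $M$ and yields a pro-$M$-definable deformation retraction of $\widehat{O}\cap\widehat{V}$ onto a $\Gamma$-internal subset definably homeomorphic to a definable $\Lambda\subset\Gamma_{\infty}^m$. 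By o-minimality of the theory of divisible ordered abelian groups, $\Lambda$, and hence $\Lambda(M)$, has finitely many definably connected components, each open; since the retraction is realised at $M$, the space $\widehat{O}(M)\cap\widehat{V}(M)$ is the finite disjoint union of connected open subsets $C_1,\ldots,C_t$. Applying the same argument to the preimage gives $\widehat{\phi}^{-1}(\widehat{O}(M)\cap\widehat{V}(M))=\bigsqcup_{j=1}^{t'}C'_j$ with the $C'_j$ connected and open. By construction $t=N^M_V(\mathbf{r},x)$ and $t'=N^M_{V'}(\mathbf{r},x)$.

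Next I would transport the clopenness of $\widehat{\phi}$. As $V$ is normal, $\widehat{\phi}$ is open by Corollary 8.7.2 in \cite{loes} and closed by Lemma 4.2.24 in loc.\ cit.; both are statements about the pro-definable map and hold at $M$-points, and the restriction of a continuous closed (resp.\ open) map to the full preimage of an open set is closed (resp.\ open) onto that open set. So the restriction $\widehat{\phi}(M):\widehat{\phi}^{-1}(\widehat{O}(M)\cap\widehat{V}(M))\to\widehat{O}(M)\cap\widehat{V}(M)$ is clopen; being continuous and open it sends each $C'_j$ onto a nonempty clopen connected subset of $\widehat{O}(M)\cap\widehat{V}(M)$, which is therefore one of the $C_i$. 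This gives $t'=\sum_i\#\{\,j:\widehat{\phi}(C'_j)=C_i\,\}$, and from the first part of Lemma 4.2 read over models, every fibre of $\widehat{\phi}(M)$ has at most $d$ elements.

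Finally I would run the two directions. For the ``if'' direction, assume $t'=dt$: each $C_i$ is the image of at most $d$ of the $C'_j$ (otherwise a point of $C_i$ would have more than $d$ preimages), so the $t$ numbers $\#\{\,j:\widehat{\phi}(C'_j)=C_i\,\}$ sum to $dt$ with each at most $d$, forcing each to equal $d$; then over any point of a fixed $C_i$ the $d$ components above it each carry at least one preimage while the total is at most $d$, so each carries exactly one, i.e.\ every $\widehat{\phi}(M)|_{C'_j}:C'_j\to C_i$ is a continuous open bijection and hence a homeomorphism, and the required decomposition holds. For the ``only if'' direction, a decomposition into homeomorphic copies makes the fibre cardinality a constant $m\leq d$ with $t'=mt$, and I would upgrade $m$ to $d$ using the density part of Lemma 4.2 at the level of $M$-points: the set of simple points of $V$ with exactly $d$ preimages is dense in $\widehat{V}(M)$. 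This dense set meets the nonempty open $\widehat{O}(M)\cap\widehat{V}(M)$, producing a point with exactly $d$ preimages, whence $m=d$ and $t'=d\,N^M_V(\mathbf{r},x)$.

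The only step that is not mere bookkeeping is confirming that the density assertion of Lemma 4.2 genuinely localises to an arbitrary model $M$; this it does, since its geometric input reduces to the statement that for a Zariski-dense open $W\subset\mathbb{A}^1$ defined over $M$ the set $\{\mathrm{val}(x):x\in W(M)\}$ is dense in $\Gamma_{\infty}(M)$, which holds because $M$ is algebraically closed and non-trivially valued, so $\mathrm{val}(M^{*})=\Gamma(M)$ and $W(M)$ is cofinite in $M$. Once this and the $M$-point realisation of [\cite{loes}, 10.1.1] are in hand, the combinatorial argument is identical to that of Lemma 4.4.
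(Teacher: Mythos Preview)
Your proposal is correct and takes precisely the paper's approach: the paper's proof is the single sentence ``The proof above can be adapted to show the following result,'' and you have written out that adaptation in detail, correctly flagging the two places (the $M$-level realisation of the Hrushovski--Loeser retraction and the density in $\widehat{V}(M)$ of simple points with $d$ preimages) where the transfer requires checking. One slip: the lemma whose proof you are repeating is Lemma~4.3, not Lemma~4.4.
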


       We now prove a lemma which is central to the proof of Theorem 4.6. 
     We preserve the notation $N_{V'}^M(\mathbf{r},x)$ and 
      $N^M_V(\mathbf{r},x)$ introduced in the preceding lemma.  
      
       \begin{lem} 
          There exists a $k$ - definable subset $D$ of $R$ such that 
        for $M$ a model of ACVF with value group $\mathbb{R}_{\infty}$
    we have that $(\mathbf{r},x) \in D(M)$ if and only if $N^M_{V'}(\mathbf{r},x) = dN^M_V(\mathbf{r},x)$ where $d$ denotes the separable degree of the 
    morphism $\phi$.      
         \end{lem}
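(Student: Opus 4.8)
The plan is to reduce the lemma to a single assertion: that the two functions $(\mathbf r,x)\mapsto N^M_{V}(\mathbf r,x)$ and $(\mathbf r,x)\mapsto N^M_{V'}(\mathbf r,x)$ on $R$ are \emph{uniformly} $k$-definable, i.e. for each $j\in\mathbb N$ there is a single formula $\theta^V_j$ (resp. $\theta^{V'}_j$) over $k$, valid in every model $M$ of ACVF with value group $\mathbb R_\infty$, which defines $\{(\mathbf r,x)\in R(M):N^M_V(\mathbf r,x)=j\}$ (resp. $=j$ for $N^M_{V'}$). Once this is known, together with a uniform bound on these counts over $(\mathbf r,x)\in R$ (also to be extracted below), the set $D:=\bigcup_{j}\bigl(\theta^V_j\wedge\theta^{V'}_{dj}\bigr)$ is a finite union of $k$-definable sets, hence $k$-definable, and by construction $(\mathbf r,x)\in D(M)$ if and only if $N^M_{V'}(\mathbf r,x)=d\,N^M_V(\mathbf r,x)$; combined with Lemma 4.5 this is exactly what is claimed. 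So the content is the uniform definability of the component counts.

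To obtain it I would first package the data into $k$-definable families. For $(\mathbf r,x)\in R$ let $O_{(\mathbf r,x)}\in\mathcal O^0_x$ be the unique $v+g$-open polydisc with $h(O_{(\mathbf r,x)})=\mathbf r$ (well defined by the discussion preceding Remark 4.1). Then $\Xi:=\{(\mathbf r,x,p):(\mathbf r,x)\in R,\ p\in O_{(\mathbf r,x)}\cap V\}$ is a $k$-definable subset of $(V\times\mathbb P^n)\times\Gamma_\infty^{(n+1)^2}$ with fibre $O_{(\mathbf r,x)}\cap V$ over $(\mathbf r,x)$, and $\Xi':=(\mathrm{id}_R\times\phi)^{-1}(\Xi)$ is $k$-definable with fibre $\phi^{-1}(O_{(\mathbf r,x)}\cap V)\subset V'$ over $(\mathbf r,x)$. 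Applying the relative form of the Hrushovski--Loeser homotopy theorem (the parametrized counterpart of Theorem 2.11, established in \cite{loes}) to the families $\widehat\Xi\to R$ and $\widehat{\Xi'}\to R$, one gets pro-$k$-definable fibrewise deformation retractions of $\widehat O_{(\mathbf r,x)}\cap\widehat V$ and of $\widehat\phi^{-1}(\widehat O_{(\mathbf r,x)}\cap\widehat V)$ onto $\Gamma$-internal sets $\Lambda_{(\mathbf r,x)}$ and $\Lambda'_{(\mathbf r,x)}$, where the collections $\{\Lambda_{(\mathbf r,x)}\}$ and $\{\Lambda'_{(\mathbf r,x)}\}$ form $k$-definable families, each member being definably homeomorphic to a definable subset of $\Gamma_\infty^{m}$ for one fixed $m$. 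Since a continuous deformation retraction induces a bijection on sets of connected components (this is already how connected components are controlled in the proofs of Lemmas 4.3 and 4.5), we get $N^M_V(\mathbf r,x)=$ (number of connected components of $\Lambda_{(\mathbf r,x)}(M)$) and $N^M_{V'}(\mathbf r,x)=$ (number of connected components of $\Lambda'_{(\mathbf r,x)}(M)$) for every model $M$ as in the statement.

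It then remains to invoke that the value-group sort of ACVF is $o$-minimal (its induced structure is that of a divisible ordered abelian group with a largest element $\infty$): in any $k$-definable family of subsets of $\Gamma_\infty^{m}$ the number of connected components is uniformly bounded, takes only finitely many values, and each value is cut out by a first-order formula over $k$ that does not depend on the ambient model. Applying this to $\{\Lambda_{(\mathbf r,x)}\}$ and $\{\Lambda'_{(\mathbf r,x)}\}$ produces the desired uniformly $k$-definable formulas $\theta^V_j$, $\theta^{V'}_j$ together with the uniform bound, and the first paragraph then finishes the proof.

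I expect the genuine obstacle to be the second step: the Hrushovski--Loeser retraction must be performed uniformly over the base $R$, which carries both valued-field coordinates (the centre $x\in\mathbb P^n$) and value-group coordinates ($\mathbf r$), and one needs the resulting $\Gamma$-internal targets to assemble into a single $k$-definable family rather than merely to exist for each fixed $(\mathbf r,x)$; this is precisely the relative/parametrized version of the homotopy construction in \cite{loes}, and checking that $\widehat\Xi\to R$ (and $\widehat{\Xi'}\to R$) satisfies its hypotheses is where care is required. An alternative route, closer to Section 3, is to fix a model $M$ with value group $\mathbb R_\infty$, identify $\widehat O_{(\mathbf r,x)}\cap\widehat V$ with the Berkovich space $W\cap X$ of Proposition 3.3 by means of Propositions 2.14 and 2.15, read off $N^M_V$ and $N^M_{V'}$ as the numbers of connected components of the reductions $\tilde X$, $\tilde Y$ — which vary constructibly with $(\mathbf r,x)$ inside the residue-field sort — and then use that $\widehat\phi$ is open (Corollary 8.7.2 of \cite{loes}) together with elimination of imaginaries in the residue field to see that ``$N^M_{V'}=d\,N^M_V$'' is a $k$-definable condition; the main difficulty there is the model-independent, uniform translation of Proposition 3.3 into the ACVF language.
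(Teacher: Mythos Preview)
Your overall architecture coincides with the paper's: package the fibres $O_{(\mathbf r,x)}\cap V$ and $\phi^{-1}(O_{(\mathbf r,x)}\cap V)$ into $k$-definable families over $R$, apply the \emph{relative} Hrushovski--Loeser retraction (this is Theorem~10.7.1 in \cite{loes}, exactly the result you anticipate needing) to obtain, uniformly in $\tau=(\mathbf r,x)$, $\Gamma$-internal skeleta definably homeomorphic to subsets $T_\tau\subset\Gamma_\infty^r$ for one fixed $r$ (the paper extracts this fixed $r$ from the claim in the proof of Theorem~13.3.1 of \cite{loes}, a step you assert without citation), and then read off the connected-component counts from the $T_\tau$.

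Where you diverge is in the final step. You argue directly from $o$-minimality of the $\Gamma$-sort that, in a $k$-definable family of subsets of $\Gamma_\infty^r$, the condition ``has exactly $j$ connected components'' is cut out by a single formula over $k$, uniformly in the model. The paper instead passes to an expansion ACVF$'$ in which $\Gamma$ is enriched to a real closed field, invokes Remark~13.3.2 of \cite{loes} (finiteness of the homeomorphism types of the $T_\tau(\mathbb R_\infty)$) to get the uniform bound and an ACVF$'$-definable partition of $R$ by component count, checks that this partition is invariant under $\mathrm{Aut}(k_{\max}/k)$ to conclude it is $k$-definable in ACVF$'$, and finally applies Beth's theorem to descend to an ACVF formula with the same $M$-points for every $M$ with value group $\mathbb R_\infty$. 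Your route is shorter and is correct in principle; its cost is that you must justify carefully that the $\{T_\tau\}$ really form a $k$-definable family (the paper's Galois-invariance argument is precisely what handles the possibility that the uniform retraction is a priori only definable over $k_{\max}$), and that the DOAG cell-decomposition argument goes through cleanly in $\Gamma_\infty^r$ (with the point $\infty$). The paper's route buys an off-the-shelf finiteness statement about polytope types and isolates the descent to ACVF in a single appeal to Beth, at the price of introducing the auxiliary theory ACVF$'$.
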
          
      \begin{proof} 
        Consider the set $X \subset V' \times R$ consisting of tuples $(z,\mathbf{r},x)$
        such that 
         $(\mathbf{r},x) \in R$, $z \in V'$ and $\phi(z) \in O$ where 
         $O \in \mathcal{O}_{x}^0$ and $h(O) = \mathbf{r}$. The set $X$ is definable 
         and if $pr$ denotes the projection $X \to R$ then for any $(\mathbf{r},x) \in R$    
     the fibre $pr^{-1}(\mathbf{r},x)$ will be the definable set $\phi^{-1}(O \cap V)$ where $O \in \mathcal{O}^0_x$ such that 
     $h(O) = \mathbf{r}$. For $\tau = (\mathbf{r},x) \in R$, we will write $O_\tau$ for that element $O \in \mathcal{O}_{x}^0$ such that 
     $h(O) = \mathbf{r}$. 
     Given $\tau \in R$, we write $k(\tau)$ for the definable closure of $k \cup \{\tau\}$. 
      By Theorem 10.7.1 in \cite{loes} 
       there exists, uniformly in $\tau \in R$,  
      a pro - definable family $H_{\tau}  : I \times \widehat{\phi}^{-1}(\widehat{O}_{\tau} \cap \widehat{V}) \to \widehat{\phi}^{-1}(\widehat{O}_{\tau} \cap \widehat{V})$, a finite 
   $k(\tau)$ -definable set $w(\tau)$, a $k(\tau)$-  definable set $W_{\tau} \subset \Gamma_{\infty}^{w(\tau)}$ 
         and $j_{\tau} : W_{\tau} \to H_{\tau}(e,  \widehat{\phi}^{-1}(\widehat{O}_{\tau} \cap \widehat{V}))$ , pro - definable uniformly in $\tau$ such
that for each $\tau \in R$, $H_{\tau}$ is a deformation retraction and $j_{\tau} : W_{\tau} \to H_{\tau}(e,\widehat{\phi}^{-1}(\widehat{O}_{\tau} \cap \widehat{V}))$ is
a definable homeomorphism and $e$ denotes the end point of the interval $I$.  Let $Z_{\tau} :=  H_{\tau}(e,  \widehat{\phi}^{-1}(\widehat{O}_{\tau} \cap \widehat{V})$.  
     By the claim in the proof of Theorem 13.3.1 \cite{loes}, there exists uniformly in $\tau$ a $k(\tau)$ - definable 
     set $T_{\tau} \subset \Gamma_{\infty}^r$, a $k(\tau)$ - definable set $W(\tau)$ and for 
     $w \in W(\tau)$, a definable homeomorphism $\psi_{w} : Z_{\tau} \to T_{\tau}$.   
     We have in this manner obtained a family of definable subsets of $\Gamma_{\infty}^r$ parametrized by $R$. 
     Observe that if $M$ is a model of ACVF with value group $\mathbb{R}_{\infty}$ and 
     $\tau \in R(M)$ then the image of the deformation retraction 
     $H_{\tau}(M) : I(\mathbb{R}_{\infty}) \times \widehat{\phi}^{-1}(\widehat{O}_{\tau}(M) \cap \widehat{V}(M)) \to \widehat{\phi}^{-1}(\widehat{O}_{\tau}(M) \cap \widehat{V}(M))$ is 
     definably homeomorphic to 
     $T_{\tau}(\mathbb{R}_{\infty})$. 
   
       Let $\Gamma^*$ be an expansion of $\Gamma$ to RCF and ACVF' denote the extension of ACVF with the sort $\Gamma^*$ in place of $\Gamma$. 
           By remark 13.3.2 in \cite{loes}, there exists a  
     finite number of polytopes such that for every $\tau \in R$, $T_\tau(\mathbb{R}_{\infty})$ is  
     homeomorphic to exactly one of these polytopes. This implies firstly that there exists $N$ such that for every $\tau \in R$,  
     $T_{\tau}(\mathbb{R}_{\infty})$ has less than $N$ connected components. 
     It follows that $R$ can be partitioned into $N$ ACVF' definable sets $\{E'_1,\ldots,E'_N\}$  
      such that 
     if $M$ is a model of ACVF whose value group is $\mathbb{R}_{\infty}$ and 
     $\tau := (\mathbf{r},x) \in E'_j(M)$ then $\widehat{\phi}^{-1}(\widehat{O}(M) \cap \widehat{V}(M))$ must have 
     $j$ connected components.  The $E'_j$ are ACVF' definable with parameters in $k$. Indeed, the field 
     $k_{max}$ which is a maximally complete field extension of $k$ with value group $\mathbb{R}_{\infty}$ and
     residue field $\tilde{k}$ (Section 2.3) can be extended to a model of ACVF'. Let $g \in \mathrm{Aut}(k_{max}/k)$. 
     Let $\tau \in R(k_{max})$. As $R$ is definable over $k$ and the $T_{\tau}$ are definable uniformly over 
     $k(\tau)$, it follows that $T_{\tau}(\mathbb{R}_{\infty})$ and 
    $T_{g(\tau)}(\mathbb{R}_{\infty})$ have the same number of connected components which implies $g({\tau}) \in E'_j(k_{max})$. 
     As $E'_j(k_{max})$ is preserved by the action of $\mathrm{Aut}(k_{max}/k)$, we conclude that it is defined with parameters from $k$.  
     
    In the above discussion if we were to substitute the set $X \subset V' \times R$ with the $k$ - definable set 
    $X' \subset V \times R$ defined by 
    tuples $(z,\mathbf{r},x)$
        such that 
         $(\mathbf{r},x) \in R$, $z \in V \cap O$ where 
         $O \in \mathcal{O}_{x}^0$ and $h(O) = \mathbf{r}$, we would obtain a similar partition of $R$. 
         That is, there exists $N'$ and a collection of ACVF' $k$ - definable subsets 
         $\{F'_1,\ldots,F'_{N'}\}$ which partition $R$ such that 
          if $M$ is a model of ACVF whose value group is $\mathbb{R}_{\infty}$ and 
     $\tau := (\mathbf{r},x) \in F'_j(M)$ then $\widehat{O}(M) \cap \widehat{V}(M)$ must have 
     $j$ connected components. 
         
          The two ACVF' partitions of $R$ can be used to give an ACVF' definable set $D' \subset R$ which is defined with parameters from $k$ such that 
          if  $M$ is a model of ACVF whose value group is $\mathbb{R}_{\infty}$ and 
     $\tau := (\mathbf{r},x) \in D'(M)$ then $N^M_{V'}(\mathbf{r},x) = dN^M_{V}(\mathbf{r},x)$.  
             By Beth's theorem, there exists an ACVF definable set $D \subset R$ defined with parameters in $k$ 
            such that if  $M$ is a model of ACVF whose value group is $\mathbb{R}_{\infty}$
        then $D(M) = D'(M)$ i.e. if $\tau \in D(M)$ then $N^M_{V'}(\mathbf{r},x) = dN^M_{V}(\mathbf{r},x)$.

   \end{proof}  

The following is a version of Theorem 1.7 for the spaces $\widehat{V}$. 

 \begin{thm}
    Let $\phi : V' \to V$ be a finite surjective morphism between irreducible, projective varieties with $V$ normal. 
 Let $g \in S$. There exists a pro-definable deformation retraction 
\begin{align*}
  \psi : I \times \widehat{V} \to \widehat{V}
\end{align*}
  which satisfies the following properties. 
\begin{enumerate}
  \item Let $I$ be a generalised interval of the form $[i,e]$. 
  The image $T_g := \psi(e,\widehat{V})$ of the deformation retraction $\psi$ is a $\Gamma$ - internal subset  of 
  $\widehat{V}$ \emph{[\cite{loes},Chapter 6]} and there exists a definable homeomorphism $j_g : T_g \to \Upsilon_g$ where
$\Upsilon_g \subset \Gamma_{\infty}^n$ is a $k$ - definable set. 
   \item There exists a well defined piecewise linear function $M_g : T_g \to \Gamma_{\infty}$ which satisfies the following conditions. 
The function $M_g$ takes values other than $\infty$. In fact there exists 
$x \in T_g(k)$ such that $M_g(x) \neq \infty$. 
 Let 
$\gamma \in T_g$ be a point for which $M_g(\gamma) \neq 0$ and 
$x \in \psi(e,\_)^{-1}(\gamma)$ such that 
there exists $L/k$ a complete non-Archimedean real valued algebraically closed field extension for which $\Gamma_{\infty}(L) = \mathbb{R}_{\infty}$ and
$x \in V(L)$.  There exists $W \in (g' \circ h)^{-1}(M_g(\gamma)) \cap \mathcal{O}_{x}^0$ such that the open set  
 $(\widehat{\phi})^{-1}(\widehat{W}(L) \cap \widehat{V}(L))  \subset \widehat{V}'(L)$ decomposes into the disjoint union of $\widehat{V}'(L)$ open sets, each
 homeomorphic to $\widehat{W}(L) \cap \widehat{V}(L)$ via $\widehat{\phi}$. Furthermore, let
 $O \in \mathcal{O}_{x}^0$ be such that
 $h(O) \in \mathbb{R}_\infty^{{n+1}^2}$ and  
  the preimage of $\widehat{O}(L) \cap \widehat{V}(L)$ under $\widehat{\phi}$
decomposes into 
the disjoint union of open sets in $\widehat{V}'(L)$, each 
 homeomorphic to $\widehat{O}(L)$ via the morphism $\widehat{\phi}$. Then $(g' \circ h)(O) \geq M_g(\gamma)$.
 \end{enumerate}
\end{thm}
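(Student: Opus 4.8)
The plan is to reduce the theorem to the existence theorem for deformation retractions on $\widehat V$ (Theorem 2.11, i.e. [loes, 10.1.1]) applied to a single well-chosen definable function. First I would use Lemma 4.5 to produce the $k$-definable set $D \subset R$, and then define a function $\mu_g \colon V \to \Gamma_\infty$ by
\[
  \mu_g(x) := \inf\{\, g'(\mathbf r) : (\mathbf r, x) \in D \,\}.
\]
By Lemmas 4.3, 4.4 and 4.5, for a model $M \models \mathrm{ACVF}$ with value group $\mathbb R_\infty$ the condition $(\mathbf r, x) \in D(M)$ is exactly the condition that $\widehat\phi^{-1}(\widehat O \cap \widehat V)$ decompose into the disjoint union of copies of $\widehat O \cap \widehat V$ via $\widehat\phi$, where $O \in \mathcal O^0_x$ and $h(O) = \mathbf r$; since $g'$ is continuous, monotone and definable in the theory of divisible ordered abelian groups, $\mu_g$ is a $k$-definable function on $V$. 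Two points need checking here. First, $D_x := \{\mathbf r : (\mathbf r, x)\in D\}$ is upward closed for the coordinatewise order on $\Gamma_\infty^{(n+1)^2}$: if the preimage of a ball decomposes into copies then, restricting the homeomorphisms, so does the preimage of any smaller concentric ball; this is what makes $\mu_g$ behave as "the size of the largest good ball". Second, the infimum is attained: writing $O_{\mathbf r} = \bigcup_n O_{\mathbf r_n}$ as an increasing union of balls for $\mathbf r_n \nearrow \mathbf r$, the number of connected components of $\widehat O_{\mathbf r}\cap\widehat V$ and of its $\widehat\phi$-preimage stabilises, so $D_x$ is closed and its infimum under $g'$ is realised by some $\mathbf r^\ast \in D_x$.

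Next I would apply the $\widehat V$-version of the existence theorem to the function $\mu_g$: by condition (1) of Theorem 2.11, $\mu_g$ extends to a pro-definable $\widehat\mu_g \colon \widehat V \to \Gamma_\infty$, and there is a pro-definable deformation retraction $\psi \colon I \times \widehat V \to \widehat V$ with $\Gamma$-internal image $T_g := \psi(e,\widehat V)$, definably homeomorphic over $k$ to a $k$-definable $\Upsilon_g \subset \Gamma_\infty^n$, and with $\widehat\mu_g \circ \psi(e,-) = \widehat\mu_g$; moreover by condition (5) the image may be taken Zariski dense in $\widehat V$. This is property (1). Now set $M_g := \widehat\mu_g|_{T_g}$. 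Well-definedness of $M_g$ along the fibres of $\psi(e,-)$ is exactly the identity $\widehat\mu_g\circ\psi(e,-) = \widehat\mu_g$; transporting along the homeomorphism $T_g \cong \Upsilon_g$, $M_g$ becomes a $k$-definable function $\Upsilon_g \to \Gamma_\infty$, hence (after a finite subdivision of $\Upsilon_g$) piecewise linear by the description of definable functions in divisible ordered abelian groups. For non-triviality, Lemma 4.2 gives a dense set of simple points $x$ with $\#\phi^{-1}(x) = d$ at which $\phi$ is étale; around such a point all sufficiently small balls $O$ satisfy $N_{V'} = d\,N_V$, so $(\mathbf r, x) \in D$ for $\mathbf r$ large and $\mu_g(x) \neq \infty$. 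Thus $\{p \in \widehat V : \widehat\mu_g(p) \neq \infty\}$ is a nonempty $k$-definable subset of $\widehat V$; since $T_g$ is Zariski dense and everything is defined over $k$, it meets $T_g$ in a $k$-point, giving $x \in T_g(k)$ with $M_g(x) \neq \infty$.

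Finally I would verify property (2) for a field $L$ with $\Gamma_\infty(L) = \mathbb R_\infty$. Given $\gamma \in T_g$ with $M_g(\gamma)$ not the degenerate value and $x \in \psi(e,-)^{-1}(\gamma) \cap V(L)$, fibre-invariance gives $\mu_g(x) = M_g(\gamma)$. By the attainment established above there is $\mathbf r^\ast \in \mathbb R_\infty^{(n+1)^2}$, representable over the dense value group of $L$, with $(\mathbf r^\ast, x) \in D(L)$ and $g'(\mathbf r^\ast) = M_g(\gamma)$; letting $W \in \mathcal O^0_x$ be the ball with $h(W) = \mathbf r^\ast$, Lemmas 4.4 and 4.5 say precisely that $\widehat\phi^{-1}(\widehat W(L) \cap \widehat V(L))$ decomposes into the disjoint union of $\widehat V'(L)$-open sets each homeomorphic to $\widehat W(L)\cap\widehat V(L)$ via $\widehat\phi$. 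Conversely, if $O \in \mathcal O^0_x$ with $h(O) \in \mathbb R_\infty^{(n+1)^2}$ has the analogous decomposition property, then $(h(O),x) \in D(L)$ by the same equivalences, so $(g'\circ h)(O) = g'(h(O)) \in g'(D_x)$, whence $(g'\circ h)(O) \geq \mu_g(x) = M_g(\gamma)$. The main obstacle is Step 1: verifying that the "decomposes into copies" property is simultaneously captured by a single $k$-definable set (this is where Lemma 4.5 and its Beth-theorem argument are essential) and that the resulting infimum is attained, so that $\mu_g$ is a genuine $k$-definable function on $V$ realised by an actual ball. Once that is in place, Steps 2 and 3 are a direct application of the Hrushovski–Loeser machinery, and the piecewise linearity and non-triviality statements are essentially formal.
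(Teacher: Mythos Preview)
Your proposal follows essentially the same route as the paper: define the $k$-definable function $g'_{\mathrm{inf}}$ (your $\mu_g$) via Lemma 4.5, apply Theorem 2.11 so that this function is constant on fibres, set $M_g$ to be its restriction to the image, and verify part (2) using Lemmas 4.3--4.5 together with the attainment of the infimum (your inline argument is the content of the paper's Lemma 4.7).

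The one place where your argument is looser than the paper's is the existence of a $k$-point in $T_g$ with $M_g \neq \infty$. Zariski density of $T_g$ in $\widehat V$ does not by itself force $T_g$ to meet an arbitrary nonempty $k$-definable set in a $k$-point. The paper instead argues directly: by Lemma 4.8 one factors $\phi$ as purely inseparable followed by separable over open subschemes, finds a smooth $k$-point $x$ in the \'etale locus $U_0$, and invokes \cite[Lemma 7.4.1]{loes} to produce an $O \in \mathcal O^0_x$ with finite poly radius and $h(O) \in D_x$, so $g'_{\mathrm{inf}}(x) < \infty$; then $\psi(e,x) \in T_g(k)$ with $M_g(\psi(e,x)) = g'_{\mathrm{inf}}(x) < \infty$ by fibre-constancy. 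Your Lemma 4.2-based heuristic points in the same direction, and once you replace the Zariski-density step by simply pushing a $k$-point of $U_0$ forward along $\psi(e,\_)$, the argument goes through.
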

 \begin{proof}
    By Lemmas 4.5 and 4.4, there exists a $k$ - definable subset $D$ of $R$ such that 
if $M$ is a model of ACVF with value group $\mathbb{R}_{\infty}$ then
$D(M)$ is the set of tuples $(\mathbf{r},x)$ defined over $M$
such that if $W \in \mathcal{O}^0_x(M)$ and $h(W) = \mathbf{r}$
 then 
 $\widehat{\phi}^{-1}(\widehat{W}(M) \cap \widehat{V}(M))$ is the disjoint union of $\widehat{V}(M)$ open sets 
  each of which are homeomorphic to $\widehat{W}(M) \cap \widehat{V}(M)$ via the morphism $\widehat{\phi}$. 
   The $k$ - definable set $D \subset [0,\infty]^{{n+1}^2} \times V$  
 comes equipped with a projection map $pr : D \to  V$.
  For $x \in V$, let $D_x := pr^{-1}(x)$.    
 By definition, 
$D_x$ is uniformly definable in $x$ with parameters in $k(x)$.  
Hence $g' \circ h(D_x) \subset \Gamma_{\infty}$ is a $k(x)$ - definable set. 
  For $x \in V$, let $g'_{\mathrm{inf}}(x)$ be the infimum of the 
definable set $g' \circ h(D_x)(\mathbb{U}) \subset \Gamma_{\infty}(\mathbb{U})$. We set 
$g'_{\mathrm{inf}}(x) = \infty$ when the set $D_x$ is empty.  
As $D_x$ is uniformly definable in $x$ with parameters in $k(x)$,
 $g'_{\mathrm{inf}}$ extends to a $k$ - definable function from $V \to \Gamma_{\infty}$ 
which can be extended to a pro - definable function  
 $\widehat{V}$ to $\Gamma_{\infty}$. 

 By Theorem 2.11, there exists a $k$ - definable $\Gamma$ - internal subset $T_g$ of 
 $\widehat{V}$, a pro - definable deformation retraction $H : I \times \widehat{V} \to \widehat{V}$ such that
 $H(e,\widehat{V}) = T_g$ where $e$ denotes the end point of the interval $I$ and
 the function $g'_{\mathrm{inf}}$ is constant along the fibres of the deformation retraction.
Let $M_g$ denote the restriction of $g'_{\mathrm{inf}}$ to $T_g$.    
 It remains to verify part (2) of the statement of the theorem. 

 Let $\gamma \in T_g$ and $x \in V$ such that $H(e,x) = \gamma$. Furthermore, we suppose that $L$ is a model of 
   ACVF over which $x$ is defined and $\Gamma_{\infty}(L) = \mathbb{R}_{\infty}$. Let $O \in \mathcal{O}^0_x(L)$ be such that 
   the preimage of $\widehat{O}(L) \cap \widehat{V}(L)$ under $\widehat{\phi}$
decomposes into 
the disjoint union of open sets in $\widehat{V}'(L)$, each 
 homeomorphic to $\widehat{O}(L)$ via the morphism $\widehat{\phi}$. It follows from Lemma 4.3 that 
$h(O) \in D_x(L)$. Hence from the definition of $g'_{\mathrm{inf}}(x)$ we get that 
 $(g' \circ h)(O) \geq g'_{\mathrm{inf}}(x) = M_g(\gamma)$. 
     Furthermore, by Lemma 4.7  
       there exists $W \in \mathcal{O}_x^0(L)$ such that $g' \circ h(W) = g'_{\mathrm{inf}}(x)$. This proves 
       part (2) of the theorem.  
     
       We now show that the function $M_g$ takes 
     values other than $\infty$. By Lemma 4.8,
     there exists affine open sets $U \subset V$, $U' \subset V'$ and an affine scheme $U''$ along 
  with morphisms $\phi_1 : U' \to U''$ purely inseparable and $\phi_2 : U'' \to U$ separable of degree $d$ such that
  the restriction of the morphism $\phi$ to $U'$ factors as $\phi_2 \circ \phi_1$.  
  As $\phi_1$ is purely inseparable, the induced map $\widehat{\phi_1}$ is a homeomorphism.    
   There 
  exists a smooth open sub scheme $U_0 \subset U$ over which the morphism $\phi_2$ is etale.
  Let $x$ be a $k$ - point in $U_0$. By Lemma 7.4.1 in \cite{loes}, there exists $O \in \mathcal{O}^0_x(\mathbb{R}_{\infty})$
 with $h(O) \in D_x$ with finite poly radius  
 which implies that $g'_{\mathrm{inf}}(x) < \infty$. 
      
        \end{proof} 

  \begin{lem} 
     Let $x \in V$ and $M$ be a model of ACVF which contains $k$ such that 
     $x \in V(M)$  and $\Gamma(M) \subseteq \mathbb{R}_{\infty}$. 
     Let $\mathbf{M}$ denote the structure defined by $(M,\mathbb{R}_{\infty})$. 
       There exists $O \in D_x(\mathbf{M})$ i.e $h(O) \in D_x(\mathbf{M})$ such that $g' \circ h(O) =  \mathrm{inf}\{g' \circ h(O) | O \in D_x(\mathbf{M})\}$ i.e. 
     the set  $\{g' \circ h(O) | O \in D_x(\mathbf{M})\}$ contains its infimum. 
  \end{lem}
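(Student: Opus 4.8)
The plan is to study $D_x$ as a $k(x)$-definable set of poly radii inside $\Gamma_{\infty}^{(n+1)^2}$ and to reduce the statement to o-minimality of the value-group sort of ACVF; we may assume $D_x(\mathbf{M})\neq\emptyset$, since otherwise the set of values is empty and there is nothing to prove. For a poly radius $\mathbf{r}$ write $O_{\mathbf{r}}\in\mathcal{O}^0_x$ for the corresponding ball. First I would record that $D_x$ is an up-set for the coordinatewise order: if $\mathbf{r}\leq\mathbf{r}'$ and $\mathbf{r}\in D_x$ then $O_{\mathbf{r}'}\subseteq O_{\mathbf{r}}$, and writing $\widehat{\phi}^{-1}(\widehat{O_{\mathbf{r}}}\cap\widehat{V})=\bigsqcup_j C'_j$ with each $\widehat{\phi}|_{C'_j}$ a homeomorphism onto $\widehat{O_{\mathbf{r}}}\cap\widehat{V}$ (possible by Lemma 4.3), the sets $(\widehat{\phi}|_{C'_j})^{-1}(\widehat{O_{\mathbf{r}'}}\cap\widehat{V})$ exhibit $\widehat{\phi}^{-1}(\widehat{O_{\mathbf{r}'}}\cap\widehat{V})$ as a disjoint union of homeomorphic copies of $\widehat{O_{\mathbf{r}'}}\cap\widehat{V}$, whence $\mathbf{r}'\in D_x$. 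Since $g'$ is order-preserving and continuous and $h$ sends $\mathcal{O}^0_x$ into poly radii with non-negative active coordinates, the image $E:=(g'\circ h)(D_x)\subseteq\Gamma_{\infty}$ is an up-set bounded below by $g'$ evaluated at the zero poly radius, so $\gamma_0:=\inf E$ lies in $\Gamma$ and $E$ is either $[\gamma_0,\infty]$ or $(\gamma_0,\infty]$.

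The crux is to exclude the second possibility, i.e. to show $D_x$ is closed in $[0,\infty]^{(n+1)^2}$. The functions $\mathbf{r}\mapsto N_V(\mathbf{r},x)$ and $\mathbf{r}\mapsto N_{V'}(\mathbf{r},x)$ are definable and take only finitely many values — this is exactly what the partitions of $R$ constructed in the proof of Lemma 4.5 provide — and they are continuous from the ball-shrinking direction, because $O_{\mathbf{r}_0}$ is the increasing union of the balls $O_{\mathbf{r}}$ with $\mathbf{r}>\mathbf{r}_0$, $\mathbf{r}\to\mathbf{r}_0$, and the combinatorial type of such a one-parameter definable family of open sets with uniformly bounded number of components stabilises near the endpoint; hence $N_V(\mathbf{r}_0,x)=\lim_{\mathbf{r}\downarrow\mathbf{r}_0}N_V(\mathbf{r},x)$ and likewise for $N_{V'}$. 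Given $\mathbf{r}_0\in\overline{D_x}$, the up-set property lets us approach $\mathbf{r}_0$ by a decreasing curve in $D_x$; since $D_x=\{\mathbf{r}:N_{V'}(\mathbf{r},x)=d\,N_V(\mathbf{r},x)\}$ by Lemma 4.3 and both counts are one-sidedly continuous with stabilising component structure, $\widehat{\phi}^{-1}(\widehat{O_{\mathbf{r}_0}}\cap\widehat{V})$ again splits into $d$ copies of $\widehat{O_{\mathbf{r}_0}}\cap\widehat{V}$, so $\mathbf{r}_0\in D_x$. Therefore $E=[\gamma_0,\infty]$ with $\gamma_0\in\Gamma$.

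It then remains to descend to $\mathbf{M}$. The set $\{\mathbf{r}\in D_x:(g'\circ h)(\mathbf{r})=\gamma_0\}$ is a nonempty $k(x)$-definable subset of $\Gamma_{\infty}^{(n+1)^2}$; since the value-group sort of ACVF is an o-minimal divisible ordered abelian group and hence has definable choice, this set contains a point $\mathbf{r}_0\in\mathrm{dcl}(k(x))$. As $x\in V(M)$ with $M$ a model of ACVF, $k(x)\subseteq\mathrm{dcl}(M)$, so every coordinate of $\mathbf{r}_0$ lies in $\Gamma_{\infty}(\mathrm{dcl}(M))=\Gamma_{\infty}(M)\subseteq\mathbb{R}_{\infty}$; thus $\mathbf{r}_0\in D_x(\mathbf{M})$, and $O:=O_{\mathbf{r}_0}\in\mathcal{O}^0_x$ is defined over $\mathbf{M}$ with $(g'\circ h)(O)=\gamma_0$. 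Because $D_x(\mathbf{M})\subseteq D_x$, $\gamma_0$ is a lower bound for $\{(g'\circ h)(O):O\in D_x(\mathbf{M})\}$, and it is attained at $O$; this is the assertion of the lemma.

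I expect the main obstacle to be the closedness of $D_x$ in the second paragraph: that "the preimage splits into $d$ homeomorphic copies" is a closed condition on the poly radius amounts to one-sided continuity of the component-count functions, a tameness statement about definable families of open sets with bounded numbers of connected components. One can extract it from the uniform homotopy and partition constructions already used in the proof of Lemma 4.5, or argue it directly from o-minimality in the $\Gamma$-sort; everything else is routine once this is granted.
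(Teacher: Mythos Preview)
Your argument is correct and shares its core with the paper's: both use that $D_x$ is an up-set for the coordinatewise order and reduce closedness to the fact that an increasing union of balls in $D_x$ remains in $D_x$. However, the paper's proof is considerably shorter and more elementary. It first observes that $D_x(\mathbf{M}) = D_x(\mathbb{R}_\infty)$ outright, since $D_x$ lives entirely in the $\Gamma_\infty$-sort and $\Gamma_\infty(\mathbf{M}) = \mathbb{R}_\infty$; so your entire third paragraph --- the definable choice and descent step --- is unnecessary, and one may work inside $\mathbb{R}_\infty^{(n+1)^2}$ from the outset. Then, because $g'$ is continuous and $[0,\infty]^{(n+1)^2}$ is compact, it suffices to show that $\{h(O) : O \in D_x(\mathbb{R}_\infty)\} \cup \{(\infty,\ldots,\infty)\}$ is closed. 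The paper does this by a direct sequential argument: given a convergent sequence $(h(O_n))_n$, pass to a coordinatewise-monotone subsequence, use the up-set property to arrange $h(O_n)$ decreasing (equivalently, the balls $O_n$ increasing), and check that $W = \bigcup_n O_n$ lies in $D_x$. Your one-sided continuity of the component counts $N_V$, $N_{V'}$ along a decreasing curve of poly radii and the paper's increasing-union check are really the same assertion viewed from two sides; in fact the paper only says ``it can be verified'' at this point, so your discussion of the obstacle is more explicit than the paper's own treatment. What your route costs is the detour through o-minimality and definable choice, neither of which the statement requires once one works over $\mathbb{R}_\infty$ directly.
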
 
  \begin{proof} 
  Firstly observe that $D_x(\mathbf{M}) = D_x(\mathbb{R}_{\infty})$. 
     By definition, the function $g' : \mathbb{R}_{\infty}^{(n+1)^2} \to \mathbb{R}_{\infty}$
      is continuous with respect to the topology on $\mathbb{R}_{\infty}$ induced by its ordering. 
       Hence to prove the lemma it suffice to show that $\{h(O) | O \in D_x(\mathbf{M})\} \cup (\infty,\ldots,\infty)$ is compact. 
       As $\{h(O) | O \in D_x(\mathbf{M})\} \cup (\infty,\ldots,\infty) \subseteq [0,\infty]^{{n+1}^2}$, we need only show that it is closed.    
       Let $(O_n)_n$ be a sequence of elements in $D_x(\mathbf{M})$ such that $(h(O_n))_n$ converges to 
       $\mathbf{r} \in \mathbb{R}^{{n+1}^2}_{\infty}$. By definition of the family $\mathcal{O}_x^0(\mathbf{M})$, it can be verified that there exists an element 
       $W \in \mathcal{O}^0_x(\mathbf{M})$ such that $h(W) = \mathbf{r}$ and $W$ is uniquely determined by $\mathbf{r}$ and $x$.  
       We will show that $W \in D_x(\mathbf{M})$. There exists a subsequence $(O_{m_n})_n$ of 
       $(O_n)_n$ 
       such that the sequence of $\mathbb{R}_{\infty}$ - tuples $((r_{i,m_n})_i)_n := h(O_{m_n})_n$ is either increasing or decreasing at each component i.e. 
       the sequence $(r_{i,m_n})_n$ is either increasing or decreasing for every $i$.    
        Observe that if $O \in D_x(\mathbf{M})$ and $O' \in O_x(\mathbf{M})$ such that 
       $h(O') \geq h(O)$ then $O' \in D_x(\mathbf{M})$.        
       It follows that we can assume the sequence $(O_n)_n$ is increasing i.e. that the 
       sequence $(h(O)_n)_n$ is decreasing with respect to the point wise ordering. 
       This implies that $W = \bigcup_n O_n$ and it can be verified that the ball $W$ must also belong to $D_x(\mathbf{M})$.              
   \end{proof} 

   \begin{lem}
  Let $U$ and $U'$ be integral $k$-varieties and $\phi : U' \to U$ be a finite surjective morphism between them. 
There exists integral affine $k$-varieties 
$W \subset U$, $W' \subset U'$ and 
$W''$ along with morphisms  $\phi_1 : W' \to W''$ and 
$\phi_2 : W'' \to W$ such that 
\begin{enumerate}
\item  $W$ and $W'$ are Zariski open subsets 
of $U$ and $U'$ respectively.
\item  $\phi = \phi_2 \circ \phi_1$.  
\item  The extension of function fields $k(W'') \hookrightarrow k(W')$ induced by 
$\phi_1$ is purely inseparable. 
\item The extension of function fields $k(W) \hookrightarrow k(W'')$ induced by 
$\phi_2$ is separable. 
\end{enumerate}
 \end{lem}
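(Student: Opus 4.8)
The plan is to carry out the standard factorization through the relative separable closure of function fields and then spread it out over affine opens. Since $\phi$ is finite and surjective between integral varieties it is dominant, so the induced extension of function fields $K := k(U) \hookrightarrow E := k(U')$ is finite. Let $F$ denote the separable closure of $K$ in $E$, i.e. the set of elements of $E$ that are separable over $K$. A standard field-theoretic fact is that $F$ is an intermediate field, that $F/K$ is separable, and that $E/F$ is purely inseparable; in characteristic $p$ one uses that the minimal polynomial of any $\alpha \in E$ over $K$ has the form $g(X^{p^e})$ with $g$ separable, so $\alpha^{p^e} \in F$ and $\alpha$ is purely inseparable over $F$. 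Since $E$ is finitely generated over $k$ and $E/F$ is finite, $F$ is finitely generated over $k$ as well.

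Next I spread this picture out. Choose a nonempty affine open $W_0 \subseteq U$. Because $\phi$ is finite, hence affine, $W' := \phi^{-1}(W_0)$ is a nonempty affine open subscheme of $U'$, and it is integral since $U'$ is. Put $A := \mathcal{O}(W_0)$ and $B := \mathcal{O}(W')$, so that $A \subseteq B$ is a finite extension of integral $k$-algebras with $\mathrm{Frac}(A) = K$ and $\mathrm{Frac}(B) = E$. Write $F = K(b_1,\dots,b_r)$; after multiplying each $b_i$ by a suitable nonzero element of $A$ — which alters neither the property that the $b_i$ generate $F$ over $K$ nor the field $F$ itself — we may assume $b_i \in B$ for all $i$. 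Set $C := A[b_1,\dots,b_r] \subseteq B$ and $W'' := \mathrm{Spec}(C)$; then $C$ is an integral $k$-algebra of finite type with $\mathrm{Frac}(C) = F$, so $W''$ is an integral affine $k$-variety. The inclusions $A \hookrightarrow C \hookrightarrow B$ define morphisms $\phi_2 : W'' \to W_0$ and $\phi_1 : W' \to W''$ with $\phi_2 \circ \phi_1 = \phi|_{W'}$, and on function fields $\phi_2$ induces the separable extension $K \hookrightarrow F$ while $\phi_1$ induces the purely inseparable extension $F \hookrightarrow E$. Taking $W := W_0$, properties (1)--(4) are all satisfied.

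If one additionally wants $\phi_2$ (and hence $\phi_1$) to be finite, as is used in Lemmas 4.2 and 4.3 and Theorem 4.6, one invokes generic finiteness: since $C$ is a finitely generated $A$-algebra with $\mathrm{Frac}(C)/\mathrm{Frac}(A)$ finite, there is a nonzero $f \in A$ with $C_f$ module-finite over $A_f$; replacing $W_0$ by the principal open $D(f)$, $W''$ by $\mathrm{Spec}(C_f)$ and $W'$ by $\phi^{-1}(D(f))$ leaves all function fields unchanged, makes $\phi_2$ finite, and $\phi_1$ is then finite because $B$ is finite over $A \subseteq C$; moreover, as $k(W) \hookrightarrow k(W'')$ is separable, $\Omega_{C_f/A_f}$ is a finitely generated torsion module, so after inverting one further element $\phi_2$ becomes étale. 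There is essentially no serious obstacle in this argument; the only points requiring care are the field theory — the existence of the relative separable closure $F$ with $E/F$ purely inseparable — and the verification that its generators can be chosen inside $B$, both routine once recalled.
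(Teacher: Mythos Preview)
Your argument is correct and follows the same overall strategy as the paper: factor the function-field extension $k(U)\hookrightarrow k(U')$ into a separable part followed by a purely inseparable part, and then spread this out over affine opens. The difference lies in how the intermediate scheme $W''$ is produced. The paper first shrinks so that $W'$ is \emph{normal}, and then takes $A''$ to be the integral closure (normalization) of $A$ inside the intermediate field; normality of $A'$ is what guarantees $A''\subseteq A'$ and hence a factorization at the level of rings. Your construction is more elementary: you simply pick generators $b_i$ of the intermediate field, clear denominators so that $b_i\in B$, and set $C=A[b_1,\dots,b_r]\subseteq B$. This avoids any appeal to normality and works directly; the price is that $\phi_2$ need not be finite a priori, which you then repair by a generic-finiteness localisation. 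The paper's normalization approach makes finiteness of $\phi_2$ automatic (finiteness of integral closure in a finite field extension for finitely generated $k$-algebras), at the cost of the preliminary reduction to $W'$ normal. Either route suffices for the applications in Lemmas~4.2--4.3 and Theorem~4.6.
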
    
\begin{proof}
    To begin, observe that the morphism $\phi$ is flat over a Zariski open subset of $U$ and that
$U'$ is birational to its normalization. A flat morphism which is of finite type is open. 
It follows that there exists a Zariski open affine subset $W \subset U$ and a Zariski open set    
$W' \subset U'$ which is normal and in addition the restriction 
$\phi : W' \to W$ is flat and surjective. 

    The extension of function fields $k(W) \hookrightarrow k(W')$ can be realized as
    the composition of a purely inseparable extension and a separable extension. To be precise, there exists a 
    field $k(W'')$ such that $k(W) \hookrightarrow k(W')$ factorizes into 
      $k(W) \hookrightarrow k(W'')$ which is separable and 
      $k(W'') \hookrightarrow k(W')$ which is purely inseparable. 
 
 Let $A$ and $A'$ be $k$-algebras of finite type such that 
$W = \mathrm{Spec}(A)$ and $W' = \mathrm{Spec}(A')$.
Let $A''$ denote the normalisation of $A$ in $k(W'')$ and 
set $W'' := \mathrm{Spec}(A'')$. We hence have a separable morphism
$\phi_2 : W'' \to W$. 
 Since $A'$ was constructed to be integrally closed 
in $k(W')$ and to contain $A$, we have that the integral closure of $A$ in $k(W')$ must be contained in $A'$. This implies 
$A'$ contains $A''$ and hence we have a purely inseparable morphism 
$\phi_1 : W' \to W''$. 
    This proves the lemma. 
 \end{proof}

\section{Proof of the main theorem}

      In this section we prove Theorem $1.7$. 
Let $V$ and $V'$ be irreducible, projective $k$-varieties with $V$ normal and $\phi : V' \to V$ be a finite surjective morphism. The
morphism 
 $\phi$ induces a morphism between the respective analytifications. Hence we have
\begin{align*}
  \phi^{\mathrm{an}} : V'^{\mathrm{an}} \to V^{\mathrm{an}}.
\end{align*}

\begin{rem}  We introduced the collection of functions $S$ (Remark 1.5) for the following reason. 
Let $x \in V^{\mathrm{an}}(L)$ (Remark 1.1). 
Associated to $x$ is an $L$-point of $V_L^{\mathrm{an}}$ which we denoted $x_L$ (Section 1). 
In Section 2, we defined a collection of open neighbourhoods $\mathcal{O}_{x_L}$ of $x_L$ along with a 
function $h_L : \bigcup_{x \in V^{\mathrm{an}}(L)} \mathcal{O}_{x_L} \to \mathbb{R}_{\geq 0}^{(n+1)^2}$. An element $g \in S$ allows us to 
compare elements of the family $\mathcal{O}_{x_L}$. To be precise, the family 
$\bigcup_{x \in V^{\mathrm{an}}(L)} \mathcal{O}_{x_L}$ is partially ordered by the partial ordering defined by set theoretic inclusion. By Lemma 2.8, 
if $O_1, O_2 \in \bigcup_{x \in V^{\mathrm{an}}(L)} \mathcal{O}_{x_L}$ with $O_1 \subseteq O_2$ then 
$g \circ h_L(O_1) \leq g \circ h_L(O_2)$.

    We could also avoid using the function $g$ and instead do the following. Let $S'$ denote the collection of $0$ - definable total orderings of the set 
$(\mathbb{R}_{\infty})^{(n+1)^2}$ which satisfy the following property. If $\leq_p \in S'$ 
then given a pair of $(n+1)^2$-tuples $(x_i)_i$, $(y_j)_j$ such that 
$x_i \leq y_i$ for all $i$, we must have that $(x_i)_i \leq_p (y_i)_i$. We ask in addition that if $C$ is a non empty, compact 
subspace of $(\mathbb{R}_{\geq 0})^{(n+1)^2}$ then it contains a supremum with respect to the ordering $\leq_p$. 

                One can prove the following version of the main result: 
                
               \begin{thm}
    Let $\phi : V' \to V$ be a finite surjective morphism between irreducible, projective varieties with $V$ normal. 
 Let $\leq_g \in S'$. There exists a generalised real interval $I := [i,e]$
  and a deformation retraction 
\begin{align*}
  \psi : I \times V^{\mathrm{an}} \to V^{\mathrm{an}}
\end{align*}
  which satisfies the following properties. 
\begin{enumerate}
  \item The image $\psi(e,V^{\mathrm{an}})$ of the deformation retraction 
  $\psi$ is a finite simplicial complex. Let $\Upsilon_g$
 denote this finite simplicial complex.
   \item There exists a well defined function $M_g : \Upsilon_g \to \mathbb{R}_{\geq 0}$ 
   which satisfies the following conditions. 
The function $M_g$ is not identically zero and $\mathrm{log}(M_g)$ is piecewise linear. Let  
$\gamma \in \Upsilon_g$ be a point on the finite simplicial complex for which $M_g(\gamma) \neq 0$ and 
$x \in \psi(e,\_)^{-1}(\gamma)$. 
Let $L/k$ be any complete non-Archimedean real valued algebraically closed field extension such that 
$x \in V^{\mathrm{an}}(L)$. There exists $W_{x_L} \in (h_L)^{-1}(M_g(\gamma)) \cap \mathcal{O}_{x_L}$ 
such that the open set 
 $(\phi^{\mathrm{an}}_L)^{-1}(W_{x_L} \cap V_L^{\mathrm{an}})  \subset V_L'^{\mathrm{an}}$ decomposes into the disjoint union of open 
 sets, each
 homeomorphic to $W_{x_L} \cap V_L^{\mathrm{an}}$ via $\phi_L^{\mathrm{an}}$. Furthermore, let
 $O \in \mathcal{O}_{x_L}$ be such that the preimage
 of $O \cap V_L^{\mathrm{an}}$
  under $\phi_L^{\mathrm{an}}$
decomposes into 
the disjoint union of open sets in $V_L'^{\mathrm{an}}$, each 
 homeomorphic to $O \cap V_L^{\mathrm{an}}$ via the morphism $\phi_L^{\mathrm{an}}$. Then $(g \circ h_L)(O) \leq_g M_g(\gamma)$.
 \end{enumerate}
\end{thm}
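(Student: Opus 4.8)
The plan is to run the proof of Theorem 1.7 (equivalently of Theorem 4.6) essentially verbatim, with the monotone function $g\in S$ replaced throughout by the $0$-definable total ordering $\leq_g\in S'$ and the word ``infimum'' replaced by ``supremum''. First I would pass to the model-theoretic setting of Section 4, writing the value group additively: as in Remark 4.1, the ordering $\leq_g$ on $(\mathbb{R}_{\geq 0})^{(n+1)^2}$ corresponds, via the order-reversing isomorphism $\mathrm{log}$, to a $0$-definable total ordering on $\Gamma_\infty^{(n+1)^2}$ extending the coordinatewise order, for which every non-empty ``compact'' (bounded, closed) definable subset has an extremum in the relevant direction; I will still denote it $\leq_g$. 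Next I invoke Lemmas 4.4 and 4.5 --- which are independent of the choice of ordering --- to obtain the $k$-definable subset $D\subseteq R$ whose fibre $D_x\subseteq[0,\infty]^{(n+1)^2}$ over $x\in V$ consists exactly of the poly-radii $h(O)$, $O\in\mathcal{O}_x^0$, for which $\widehat{\phi}^{-1}(\widehat{O}\cap\widehat{V})$ decomposes into $\widehat{V}'$-open sets each homeomorphic to $\widehat{O}\cap\widehat{V}$ via $\widehat{\phi}$.

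Then I would define, for $x\in V$, the element $M_g^{0}(x)\in\Gamma_\infty$ to be the $\leq_g$-supremum of the $k(x)$-definable set $\{h(O):h(O)\in D_x\}$, with value the $\leq_g$-bottom element $(\infty,\dots,\infty)$ when $D_x=\emptyset$. Lemma 4.7 together with the supremum axiom built into $S'$ shows the extremum is attained, and since $D_x$ is uniformly definable in $x$ with parameters in $k(x)$, the function $M_g^0$ extends to a $k$-definable map $V\to\Gamma_\infty$ and then to a pro-definable map $\widehat{V}\to\Gamma_\infty$. Applying Theorem 2.11 to the collection consisting of the single function $M_g^0$, condition (1) of that theorem furnishes a pro-definable deformation retraction $\psi^0:I\times\widehat{V}\to\widehat{V}$ onto a $k$-definable $\Gamma$-internal set $T_g$, definably homeomorphic via some $j_g$ to a $k$-definable $\Upsilon_g\subseteq\Gamma_\infty^N$, with $M_g^0\circ\psi^0(e,\cdot)=M_g^0$; set $M_g:=M_g^0|_{T_g}$. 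Piecewise linearity of $\mathrm{log}(M_g)$ is then immediate, since transported along $j_g$ it is a definable function from a definable subset of $\Gamma_\infty^N$ to $\Gamma_\infty$, hence piecewise linear by quantifier elimination in the value-group sort.

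The transfer to Berkovich spaces proceeds exactly as in Section 5 via Propositions 2.14 and 2.15: the retraction $\psi^0$ induces a deformation retraction $\psi:I(\mathbb{R}_\infty)\times V^{\mathrm{an}}\to V^{\mathrm{an}}$ whose image is homeomorphic to a finite simplicial complex $\Upsilon_g$, and $M_g$ descends to the function $\Upsilon_g\to\mathbb{R}_{\geq 0}$ of the statement. To verify property (2), given $\gamma$, a preimage $x$, and a field $L$ with $x\in V^{\mathrm{an}}(L)$, I would use the equivalence of Lemma 4.3 together with Proposition 3.3 (via Remark 3.2 and Lemma 3.1) of Section 3 to translate between ``$\widehat{\phi}^{-1}(\widehat{O}(L)\cap\widehat{V}(L))$ decomposes into homeomorphic copies of $\widehat{O}(L)$ via $\widehat{\phi}$'' and ``$(\phi^{\mathrm{an}}_L)^{-1}(O^{\mathrm{an}}\cap V_L^{\mathrm{an}})$ decomposes into homeomorphic copies of $O^{\mathrm{an}}\cap V_L^{\mathrm{an}}$ via $\phi^{\mathrm{an}}_L$'', using the reduction map to identify connected components and the openness and closedness of $\widehat{\phi}$ (Corollary 8.7.2 and Lemma 4.2.24 of \cite{loes}, both of which use normality of $V$). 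The witness $W_{x_L}\in h_L^{-1}(M_g(\gamma))\cap\mathcal{O}_{x_L}$ is then the poly-radius attaining the supremum defining $M_g^0(x)$, the maximality clause $h_L(O)\leq_g M_g(\gamma)$ is the defining property of that supremum, and independence of everything from the choice of $L$ and of $x_L$ over $x$ follows from Lemma 2.9. Finally $M_g$ is not identically zero: by Lemma 4.8 there is an affine open $U_0\subseteq V$ over which $\phi$ factors as a purely inseparable map followed by an \'etale map, and for a $k$-point $x\in U_0$, Lemma 7.4.1 of \cite{loes} produces an $O\in\mathcal{O}_x^0$ with finite poly-radius lying in $D_x$, so $M_g^0(x)\neq(\infty,\dots,\infty)$; condition (1) of Theorem 2.11 lets one keep such a point on $T_g$.

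The step I expect to be the main obstacle is the reconciliation in the last paragraph: matching the combinatorics of connected components and of the ``disjoint union of homeomorphic copies'' property across the comparison map $\pi_V:\widehat{X}(F_{max})\to B_{\mathbf{F}}(X)\cong V^{\mathrm{an}}$, and doing so for arbitrary points $x$ (not merely those with $\mathcal{H}(x)=k$, which is precisely what forces the detour through $L$ and $x_L$). This is where the reduction-map analysis of Section 3 does the real work, and where normality of $V$ is essential --- it guarantees that $\widehat{\phi}$, and hence $\phi^{\mathrm{an}}$ on the relevant affinoid domains, is clopen, so that counting connected components actually detects the homeomorphism-copy property.
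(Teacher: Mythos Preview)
Your proposal is correct and follows precisely the route the paper indicates: the paper does not give a separate proof of this theorem but states that ``the proof of the above result is similar to the proof of Theorem 1.7,'' and you have carried out exactly that adaptation, replacing the monotone scalar function $g\in S$ by the total ordering $\leq_g\in S'$ and invoking the same chain of lemmas (4.4, 4.5, 4.7, 4.8, Theorem 2.11, and the transfer machinery of Section~5).

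Two minor slips worth cleaning up. First, your $M_g^0(x)$ is the $\leq_g$-extremum of $D_x\subseteq\Gamma_\infty^{(n+1)^2}$, so it is tuple-valued, not an element of $\Gamma_\infty$; when you apply condition~(1) of Theorem~2.11 you should feed it the $(n+1)^2$ coordinate functions of $M_g^0$ rather than ``the single function $M_g^0$''. (The paper's statement itself has a typo here: it writes $M_g:\Upsilon_g\to\mathbb{R}_{\geq 0}$ and ``$(g\circ h_L)(O)\leq_g M_g(\gamma)$'', but the intended target is $\mathbb{R}_{\geq 0}^{(n+1)^2}$ and the intended inequality is $h_L(O)\leq_g M_g(\gamma)$, as you correctly write.) Second, be careful with the supremum/infimum bookkeeping: in the additive (Section~4) picture the order-reversing $\log$ turns the multiplicative $\leq_g$-supremum into an additive extremum in the opposite direction, so the correct word in the $\Gamma_\infty$ setting is still ``infimum'' with respect to the transported ordering, matching $g'_{\mathrm{inf}}$ in the proof of Theorem~4.6. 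Neither point affects the substance of your argument.
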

\end{rem}
   
   The proof of the above result is similar to the proof of Theorem 1.7. We now 
   state and prove Theorem 1.7. We make use of Theorem 4.6 wherein we viewed the value group additively and 
   used the functions $h$ in place of $h_L$.   
  By definition, if $x \in V_L(L)$ where $L$ is a real valued model of ACVF
   and $O \in \mathcal{O}^0_x(L)$ then 
   $B_{\mathbf{L}}(O) \in \mathcal{O}_x$ where $B_{\mathbf{L}}(O)$ is 
   the Berkovich analytification of $O$ [Section 2.3].  
   We have that $h(O) = \mathrm{log}(h_L(B_{\mathbf{L}}(O)))$ where the function 
   $\mathrm{log}$ (Remark 1.3) is applied component wise. 
     \\

\noindent $\mathbf{Theorem}$ $\mathbf{1.7}$.\emph{ 
    Let $\phi : V' \to V$ be a finite surjective morphism between irreducible, projective varieties with 
    $V$ normal. 
 Let $g \in S$. There exists a generalised real interval $I := [i,e]$
  and a deformation retraction 
\begin{align*}
  \psi : I \times V^{\mathrm{an}} \to V^{\mathrm{an}}
\end{align*}
  which satisfies the following properties. 
\begin{enumerate}
  \item The image $\psi(e,V^{\mathrm{an}}) \subset V^{\mathrm{an}}$ of the deformation retraction 
  $\psi$ is homeomorphic to a finite simplicial complex. Let $\Upsilon_g$
 denote this finite simplicial complex.
   \item There exists a well defined 
    function $M_g : \Upsilon_g \to \mathbb{R}_{≥ 0}$ which satisfies the following conditions. 
The function $M_g$ takes values other $0$ and $\mathrm{log} \circ M_g$ is piecewise linear (Remark 1.3). Let 
$\gamma \in \Upsilon_g$ be a point on the finite simplicial complex for which $M_g(\gamma) \neq 0$ and 
$x \in \psi(e,\_)^{-1}(\gamma)$. 
Let $L/k$ be any complete non-Archimedean real valued algebraically closed field extension such that 
$x \in V^{\mathrm{an}}(L)$. There exists $W_{x_L} \in (g \circ h_L)^{-1}(M_g(\gamma)) \cap \mathcal{O}_{x_L}$ 
such that  
 $(\phi^{\mathrm{an}}_L)^{-1}(W_{x_L} \cap V_L^{\mathrm{an}})  \subset V_L'^{\mathrm{an}}$ decomposes into the disjoint union of open sets, each
 homeomorphic to $W_{x_L} \cap V_L^{\mathrm{an}}$ via $\phi_L^{\mathrm{an}}$. Furthermore, let
 $O \in \mathcal{O}_{x_L}$ be such that the preimage
 of $O \cap V_L^{\mathrm{an}}$ 
  under $\phi_L^{\mathrm{an}}$
decomposes into 
the disjoint union of open sets in $V_L'^{\mathrm{an}}$, each 
 homeomorphic to $O \cap V_L^{\mathrm{an}}$ via the morphism $\phi_L^{\mathrm{an}}$. Then $(g \circ h_L)(O) \leq M_g(\gamma)$.
 \end{enumerate}
}
\begin{proof} 
    We apply Theorem 4.6 to the given data. Hence there exists a pro $k$ - definable deformation retraction 
    $H : I \times \widehat{V} \to \widehat{V}$ where $I$ is a generalised interval defined over $k$,  
    a $k$ - definable $\Gamma$-internal set $Z \subset \widehat{V}$ which is $k$ - definably homeomorphic
     to a finite simplicial complex $\Upsilon'_g$ and is the image of the deformation retraction $H$ i.e. 
     $H(e,\widehat{V}) = Z$. Furthermore, there exists a $k$ - definable function $M'_g$ on $Z$ and hence 
     a piecewise linear function on $\Upsilon'_g$ which satisfies properties (1) and (2) stated in Theorem 4.6.  
     
     Let $\mathbf{k}$ denote the substructure of ACVF defined by the pair $(k,\mathbb{R}_{\infty})$. By section 2, the space 
     $B_{\mathbf{k}}(V)$ of weakly orthogonal $\mathbf{k}$ - types is canonically homeomorphic to the Berkovich space $V^{\mathrm{an}}$.
     We will for the remainder of this proof use the notation $B_{\mathbf{k}}(V)$ for the Berkovich space $V^{\mathrm{an}}$.  
     
     Given a valued field $M$ whose value group is contained in $\mathbb{R}_{\infty}$, there exists 
      a maximally complete valued field $K$ which contains $M $ and whose residue field is equal to the algebraic closure of 
     the residue field of $M$. By Kaplansky's theorem this field is unique up to isomorphism over $\mathbf{M} = (M, \mathbb{R}_{\infty})$ and we denote it $M_{max}$. 
     By Lemma 13.1.1 and Corollary 13.1.6 in \cite{loes}, 
    there exists a canonical continuous closed surjection $\widehat{V}(k_{max}) \to B_{\mathbf{k}}(V)$ which induces a 
    deformation retraction $H : I(\mathbf{k}) \times B_{\mathbf{k}}(V) \to B_{\mathbf{k}}(V)$ with image $Z(\mathbf{k})$.  
    As $Z(\mathbf{k})$ is homeomorphic to $\Upsilon_g(\mathbf{k})$ we identify it via this homeomorphism and set 
    $\Upsilon_g := Z(\mathbf{k})$. 
        Observe that $I(\mathbf{k})$ is a generalised real interval. By Theorem 4.6, the function $M'_g$ restricted to $\Upsilon_g$ takes 
    values different from $\infty$. It follows that $M_g := \mathrm{exp} \circ M'_g$ (Remark 4.1) takes values different from $0$. 
    
    We verify part (2) of the theorem. 
       Let $p \in B_{\mathbf{k}}(V)(L)$ 
    where $L$ is a real valued complete model of ACVF.  
    This is equivalent to saying that
     $\mathcal{H}(p) \subseteq L$. This implies that $p$ when viewed as a weakly orthogonal $\mathbf{k}$ - type on $V$ admits a realisation defined over $L$. 
    The point $x_L \in V(L)$ is such a realisation. 
     Let $\gamma = H(e,x_L)$. By 4.6, there exists $W \in \mathcal{O}^0_{x_L}(L_{max})$ such that 
    $g' \circ h(W) =  M'_g(\gamma)$ and $h(O) \in D_x$ where $D_x$ is as in the proof of Theorem 4.6. Since $\Gamma(L_{max}) = \Gamma(\mathbf{L})$,
    $W$ is in fact $\mathbf{L}$ - definable. Hence we must have that $B_{\mathbf{L}}(W) \in \mathcal{O}_{x_L}$ and by Remark 4.1 
    $g \circ h_L(B_{\mathbf{L}}(W)) = M_g(\gamma)$.  
    We have the following commutative diagram. 
    
     \setlength{\unitlength}{1cm}
\begin{picture}(10,5)
\put(4,1){$B_{\mathbf{L}}(V')$}
\put(7.7,1){$B_{\mathbf{L}}(V)$}
\put(3.8,3.5){$\widehat{V}'(L_{max})$}
\put(7.5,3.5){$\widehat{V}(L_{max})$}
\put(4.4,3.3){\vector(0,-1){1.75}}
\put(8,3.3){\vector(0,-1){1.75}}
\put(5.1,1.1){\vector(1,0){2.5}}
\put(5.3,3.6){\vector(1,0){2}}
\put(5.8,0.7){$\phi_L^{\mathrm{an}}$}
\put(5.8,3.2){$\widehat{\phi}$}
\put(4.5,2.3){$\pi_{V'_L}$}
\put(7.2,2.3){$\pi_{V_L}$} 
\end{picture}        
    
    As $h(W) \in D_x$, 
     $\widehat{\phi}^{-1}(\widehat{W}(L_{max}) \cap \widehat{V}(L_{max}))$ is the disjoint union of open sets each of which are homeomorphic to 
    $\widehat{W}(L_{max}) \cap \widehat{V}(L_{max})$ via the morphism $\widehat{\phi}$.
    Let $d$ denote the separable degree of the morphism $\phi$.
    By Lemma 4.3, there exists $W'_1,\ldots,W'_d \subset \widehat{V}'$ such that the
    $W'_i(L_{max})$ are open in $\widehat{V'}(L_{max})$,
    $\widehat{\phi}^{-1}(\widehat{W}(L_{max}) \cap \widehat{V}(L_{max})) = \bigcup_i W'_i(L_{max})$ and the morphism $\widehat{\phi}$ restricts 
    to a homeomorphism from each of the $W'_i(L_{max})$ onto $\widehat{W}(L_{max}) \cap \widehat{V}(L_{max})$. 
    Let ${W'_i}^0$ denote the subset of simple points of $W'_i$. The set ${W'_i}^0$ is 
    an ind-definable set [\cite{loes},Section 2.2] and as $W'^0_i(L_{max})$ is in bijection with the definable set $W(L_{max}) \cap V(L_{max})$ via the definable map $\phi$, 
    we deduce by compactness that $W'^0_i(L_{max})$ and hence $W'^0_i$
    must be definable as well. Since the morphism $\phi$ is defined over $k$ and
    $W$ is defined over $\mathbf{L} = (L,\mathbf{R}_{\infty})$, it follows that 
    $W'^0_i$ is defined over $\mathbf{L}$.
    As all models of ACVF which contain $L_{max}$ are equivalent, we deduce that 
    the $W'^0_i$ are disjoint and $\phi$ restricts to a  bijection from $W'^0_i$ onto $W \cap V$. 
    Also $\widehat{W'^0_i} = W'_i$.
    It can be deduced from the definition of the morphism 
    $\pi_{V'_L}$ that it restricts to a morphism from $W'_i(L_{max})$ onto $B_{\mathbf{L}}({W'_i}^0)$ i.e. 
    $\pi_{V'_L}(W'_i(L_{max})) = B_{\mathbf{L}}(W'^0_i)$. We claim that the $B_{\mathbf{L}}(W'^0_i)$ are 
    disjoint open subspaces of $B_{\mathbf{L}}(V')$. That they are disjoint follows from the 
    fact that the $W'_i(L_{max})$ are disjoint. Indeed, let $p$ be an $\mathbf{L}$ - type
   lying in the intersection of $B_{\mathbf{L}}(W'^0_1)$ and $B_{\mathbf{L}}(W'^0_2)$ which are distinct. Let 
   $c$ be a realisation of $p$. The type $\mathrm{tp}(c|L_{max})$ is an $L_{max}$ - stably dominated type that belongs to 
   both $W'_1$ and $W'_2$ which is not possible. 
   We now show that for every $i$, $B_{\mathbf{L}}(W'^0_i)$ is an open subspace of 
   $B_{\mathbf{L}}(V')$. The morphism $\pi_{V'_L}$ is closed and hence it restricts to a closed surjection from 
   $\widehat{\phi}^{-1}(\widehat{W}(L_{max}) \cap \widehat{V}(L_{max}))$ onto 
   $\bigcup_i B_{\mathbf{L}}(W'^0_i)$. For a fixed $j$, the set $\bigcup_{i \neq j} W'_i(L_{max})$ is a closed 
   subspace of  $\widehat{\phi}^{-1}(\widehat{W}(L_{max}) \cap \widehat{V}(L_{max}))$ whose image via the morphism 
   $\pi_{V'_L}$ is the set $\bigcup_{i \neq j} B_{\mathbf{L}}(W'^0_i)$. Since the $B_{\mathbf{L}}(W'^0_i)$ are disjoint, the 
   set $B_{\mathbf{L}}(W'^0_j)$ is open in $\bigcup_i B_{\mathbf{L}}(W'^0_i)$. The commutative diagram implies that
   \begin{align*}
        (\phi_L^{\mathrm{an}})^{-1}(B_\mathbf{L}(W) \cap B_{\mathbf{L}}(V)) = \bigcup_i B_{\mathbf{L}}(W'^0_i). 
     \end{align*}
       Hence the $B_{\mathbf{L}}(W'^0_i)$ are open in $B_{\mathbf{L}}(V')$.

      We claim that $\phi^{\mathrm{an}}$ restricts to a 
    homeomorphism from each of the $B_{\mathbf{L}}(W'^0_i)$ onto $B_{\mathbf{L}}(W) \cap B_{\mathbf{L}}(V)$.
    We fix an index $j$. 
    Since the vertical arrows of the commutative diagram above are closed and the 
    restriction of $\widehat{\phi}$ to $W'_i(L_{max})$ is a homeomorphism onto 
    $\widehat{W}(L_{max}) \cap \widehat{V}(L_{max})$
    the morphism $\phi_L^{\mathrm{an}}$ is a closed surjection from
    $B_{\mathbf{L}}(W'^0_j)$ onto $B_{\mathbf{L}}(W) \cap B_{\mathbf{L}}(V)$. 
    We now show that it is also a bijection. Let $p \in B_{\mathbf{L}}(W) \cap B_{\mathbf{L}}(V)$ and $c$ be a realisation of $p$. 
    The point $c$ is simple in $\widehat{W} \cap \widehat{V}$. By Lemma 4.2, there exists exactly $d$ preimages of $c$ in $V'$ each contained in 
    exactly one $W'^0_i$. Let $\{c'_1,\ldots,c'_d\}$ denote this set of preimages where $c'_i \in W'^0_i$. The type 
    $\mathrm{tp}(c'_i|L_{max})$ is an $L_{max}$ - stably dominated type contained in $W'_i$ and its image in $\widehat{W} \cap \widehat{V}$ for the morphism 
    $\widehat{\phi}$ is the stably dominated type 
    $\mathrm{tp}(c|L_{max})$. As the $B_{\mathbf{L}}(W'^0_i)$ are mutually disjoint and 
    $\pi_{V'_L}(W'_i(L_{max})) = B_{\mathbf{L}}(W'^0_i)$
    it follows that there must be at least $d$ weakly orthogonal types in $B_{\mathbf{L}}(V')$ which map to $p$. However, 
    the cardinality of the fibre over $p$ for the morphism $\phi_L^{\mathrm{an}}$ is bounded above by $d$. It follows that 
     there exists one unique element in $B_{\mathbf{L}}(W'_i)$ which maps to $p$ via $\phi^{\mathrm{an}}$. This implies that the morphism 
     $\phi_L^{\mathrm{an}}$ restricts to a closed bijection from $B_{\mathbf{L}}(W'_i)$ onto $B_{\mathbf{L}}(W) \cap B_{\mathbf{L}}(V)$. It is hence a homeomorphism.   
     
      We now verify the remainder of the theorem. Let
      $O \in \mathcal{O}^0_{x_L}$ be such that $(\phi_L^{\mathrm{an}})^{-1}(B_{\mathbf{L}}(O) \cap B_{\mathbf{L}}(V))$
      is the disjoint union of open sets in $B_{\mathbf{L}}(V')$ each of which are homeomorphic 
      to $B_{\mathbf{L}}(O) \cap B_{\mathbf{L}}(V)$ via the morphism $\phi_L^{\mathrm{an}}$.  
      From the definition of the functions $h$ and $h_L$ and Remark 4.1,  
      $g' \circ h(O) = \mathrm{log}((g \circ h_L)(O))$. 
      It can be deduced from the definition of the functions 
      $g$, $g'$ and $M_g$ that 
      to complete the proof we must show that $g' \circ h(O) \geq M'_g(\gamma)$.  
      The field $L$ is algebraically closed and non - trivially valued. Hence 
      its value group $\Gamma(L)$ is dense in $\mathbb{R}_{\infty}$. This implies that 
      $\{h(O) | O \in  \mathcal{O}^0_{x_L}(L)\}$ which is the set of elements definable over $L$ is dense in 
      $\{h(O) | O \in  \mathcal{O}^0_{x_L}(\mathbf{L})\}$. As $g'(\mathbb{R}_{\infty})$ is a continuous function, we
       can reduce to when $O \in \mathcal{O}^0_{x_L}(L)$.   
       To show $g' \circ h(O) \geq M'_g(\gamma)$ it suffices to prove  
       that $h(O) \in D_{x_L}$. 
       By definition, $(\phi_L^{\mathrm{an}})^{-1}(B_{\mathbf{L}}(O) \cap B_{\mathbf{L}}(V))$ must be the disjoint union of open sets  
       in $B_{\mathbf{L}}(V')$. By Proposition 3.3, there exists $d$, $L$ - definable semi-algebraic sets $O'_i \subset V'$ such that 
       $(\phi_L^{\mathrm{an}})^{-1}(B_{\mathbf{L}}(O) \cap B_{\mathbf{L}}(V)) = \bigcup_i B_{\mathbf{L}}(O'_i)$ and the morphism 
       $\phi_L^{\mathrm{an}}$ restricts to a homeomorphism from $B_{\mathbf{L}}(O'_i)$ onto $B_{\mathbf{L}}(O) \cap B_{\mathbf{L}}(V)$ for every 
       $i$. 
         This implies in particular that $\phi$ restricts to a bijection from $O'_i(L)$ onto $O(L)$ for every $i$. As all 
          models of ACVF which contain $L$ are equivalent to $L$, we must have that the morphism $\phi$ restricts to a bijection between 
       $O'_i(\mathbb{U})$ and $O(\mathbb{U}) \cap V(\mathbb{U})$ for every $i$ and in addition 
       $O'_i(\mathbb{U}) \cap O'_j(\mathbb{U})$ is empty.  
       Furthermore, for any $z \in O \cap V$, there exists exactly $d$ preimages of $z$ in $V'$, exactly one in 
       each of the $O'_i$.   
       We now show that the morphism 
       $\widehat{\phi}$ induces a homeomorphism between $\widehat{O}_i$ and $\widehat{O} \cap \widehat{V}$.  
       Firstly, our description of the sets $O'_i$ from Proposition 3.3 was explicit, and it follows from this description that 
       $\widehat{O'_i}$ is an open subset of $\widehat{V'}$. 
       Since $\widehat{V}$ is normal, the morphism $\widehat{\phi}$ is open and the restriction of $\widehat{\phi}$ to the open set 
       $\widehat{O}'_i$ is also an open map. This restriction is in fact bijective. Indeed, let $p \in (\widehat{O} \cap \widehat{V})(L_{max})$. By definition $p$ is a stably dominated 
       type. Let $a$ be a realisation of the type $p_{|L}$. The arguments above imply that there exists exactly $d$ preimages of $a$, one in each of the 
       $O'_i$. But as $O'_i$ is defined over $L$ there exists at least $d$ preimages of $p$. However the cardinality of the set
       $\widehat{\phi}^{-1}(p)$ is bounded above by $d$. Hence there exists exactly one preimage of $p$ in each of the $\widehat{O'_i}$. It follows that the restriction 
       of $\widehat{\phi}$ to each of the $\widehat{O'_i}$ is a bijective open morphism which in turn implies that 
       the restriction of $\widehat{\phi}$ to $\widehat{O'_i}(L_{max})$ is a bijective open morphism onto $\widehat{O}(L_{max}) \cap \widehat{V}(L_{max})$.
       Hence
        $h(O) \in D_{x_L}$.   
\end{proof}

\section{The tying up of loose ends}
 
    In the introduction we announced that the goal of this article was to prove a generalization of Theorem 1.2.
     In the previous section we proved Theorem 1.7.
     We now show that the main theorem implies Theorem 1.2. 
    We begin by showing that 
   Theorem 1.2 is equivalent to Theorem 1.6. We write the value group multiplicatively in this section.  
    
\begin{prop}
Let $\phi : \mathbb{P}^1_k \to \mathbb{P}^1_k$ be a finite morphism. Given such a morphism, 
theorems $1.2$ and $1.6$ are equivalent.
\end{prop}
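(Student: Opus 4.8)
The plan is to show that, granted either of the two theorems, one recovers verbatim the data demanded by the other; the passage between the two formulations amounts to unwinding the definition of the function $f$ of Theorem 1.2 and of the function $M$ of Theorem 1.6, together with the observation that conditions $(2)(a)$ and $(2)(b)$ of Theorem 1.6 encode exactly the assertion that $M(\gamma)$ is the minimum of $1$ and the radius of the largest Berkovich open ball around $x_L$ whose preimage under $\phi_L^{\mathrm{an}}$ splits into homeomorphic copies.

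\emph{The implication $1.2 \Rightarrow 1.6$.} Assume Theorem 1.2 and keep its generalised interval $I=[i,e]$, its deformation retraction $\psi$, its finite simplicial complex $\Upsilon = \psi(e,\mathbb{P}^{1,\mathrm{an}}_k)$ and its function $f$. Since $f$ is well defined and constant on the fibres of $\psi(e,\_)$, the rule $M(\gamma) := f(x)$ for any $x \in \psi(e,\_)^{-1}(\gamma)$ defines a function $M \colon \Upsilon \to \mathbb{R}_{\geq 0}$, exactly as in the discussion motivating Theorem 1.6; because $f \leq 1$ this takes values in $[0,1]$, and it is not identically zero since $f$ is not. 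As $\psi(e,\_)$ restricts to the identity on $\Upsilon$, we have $M = f|_{\Upsilon}$, so $\log(M)$ is piecewise linear by the last clause of Theorem 1.2. Finally, if $M(\gamma) > 0$, then for every $x$ in the fibre over $\gamma$ and every admissible extension $L/k$, the ball $B(x_L, M(\gamma)) = B(x_L, f(x))$ is, by the very definition of $f$, the largest Berkovich open ball of radius at most $1$ about $x_L$ whose preimage under $\phi_L^{\mathrm{an}}$ is a disjoint union of homeomorphic copies of it; this is precisely conditions $(2)(a)$ and $(2)(b)$.

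\emph{The implication $1.6 \Rightarrow 1.2$.} Assume Theorem 1.6 and keep its data $I, \psi, \Upsilon, M$; define $f \colon \mathbb{P}^{1,\mathrm{an}}_k \to \mathbb{R}_{\geq 0}$ by the recipe of Theorem 1.2. First I would check that $f$ is well defined, i.e.\ independent of the auxiliary $L$: given $L_1, L_2$ realising $\mathcal{H}(x)$, choose a common complete algebraically closed extension $L_3$; by Lemma 2.9, together with the explicit description of $\mathcal{O}_{x_L}$ in Section 2.2.1, the families of Berkovich open balls about $x_{L_i}$ and their radii match up under base change to $L_3$, and since $\phi_L^{\mathrm{an}} = \phi^{\mathrm{an}} \times \mathrm{id}_L^{\mathrm{an}}$ the property of having a preimage that decomposes into homeomorphic copies is stable under this base change; hence the radii computed over $L_1$, $L_2$ and $L_3$ all agree. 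Next, for $\gamma \in \Upsilon$ with $M(\gamma) > 0$ and $x$ in its fibre, condition $(2)(a)$ produces a decomposing ball of radius $M(\gamma)$, so $f(x) \geq M(\gamma)$, while $(2)(b)$ shows $M(\gamma)$ dominates the radius of every decomposing ball of radius at most $1$, so $f(x) \leq M(\gamma)$; thus $f(x) = M(\gamma)$. Reading $(2)(b)$ at the vanishing points of $M$ shows there is no decomposing ball of positive radius over such $\gamma$, so $f$ vanishes on those fibres too. Consequently $f = M \circ \psi(e,\_)$, whence $f$ is constant on the fibres of $\psi(e,\_)$, is not identically zero since $M$ is not, and $\log|f|$ restricted to $\Upsilon$ equals $\log(M)$, which is piecewise linear. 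This recovers all the data of Theorem 1.2.

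The one point requiring care is the locus of $\Upsilon$ where $M$ vanishes: to conclude in the second implication that $f$ is constant (namely zero) along the fibres lying over it, one must apply the upper bound $(2)(b)$ for \emph{all} $\gamma \in \Upsilon$, not merely those with $M(\gamma) > 0$, and the direct base-change argument of the previous paragraph is what makes the comparison of radii uniform in the choice of $L$. Everything else is a matter of matching the two lists of assertions.
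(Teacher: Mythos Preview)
Your argument follows essentially the same route as the paper's own proof: in both directions one simply transports the data $(\psi,\Upsilon)$ unchanged and identifies $M$ with $f$ along the fibres of the retraction. The paper's proof is terser---for $1.6\Rightarrow 1.2$ it records only that ``if $\psi(e,x)=\gamma$ then it is clear that $f(x)=M(\gamma)$'' and then reads off the remaining conclusions---whereas you spell out the well-definedness of $f$ via a common overfield and Lemma~2.9, which the paper does not bother to do at this point.

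One remark on your final paragraph: you correctly flag that conditions $(2)(a)$ and $(2)(b)$ in Theorem~1.6 are stated only for $\gamma$ with $M(\gamma)>0$, so deducing $f(x)=0$ on the zero locus of $M$ is not literally given by the hypotheses. The paper's proof is silent on this point as well. Your proposed fix---invoking $(2)(b)$ at the vanishing locus---is not available from Theorem~1.6 as written; what one really uses is that the \emph{existence} assertion in Theorem~1.6 (``there exists a well defined function $M$ \ldots'') is understood to mean that $M(\gamma)$ records the supremum of admissible radii at every $\gamma$, so that $M(\gamma)=0$ already encodes $f=0$ on that fibre. This is a wording issue with the statement rather than a defect in your argument, and the paper treats it the same way.
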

\begin{proof}
    Let us assume that Theorem $1.2$ is true. 
Let $x \in \mathbb{P}^{1,\mathrm{an}}_k$ and $L/k$ be a
complete, algebraically closed, non-Archimedean real valued field extension of $k$ such that 
$x \in \mathbb{P}^{1,\mathrm{an}}_k(L)$. By definition, $f(x)$ is
the minimum of the radius of the largest Berkovich open ball around $x_L$ whose preimage is the disjoint union of 
homeomorphic copies of the ball via the morphism $\phi_L^{\mathrm{an}}$
and $1$. The assumption that Theorem $1.2$ is true 
implies that there exists a finite simplicial complex
 $\Upsilon \subset \mathbb{P}^{1,\mathrm{an}}_k$ and a deformation retraction 
\begin{align*}
  \psi : I \times \mathbb{P}_k^{1,\mathrm{an}} \to \mathbb{P}_k^{1,\mathrm{an}}
\end{align*}
with image $\Upsilon$ such that the function $f$ is constant on the fibres of this retraction. 
We define $M : \Upsilon \to [0,1]$
 as follows. Let $\gamma \in \Upsilon$. Pick any $x \in \mathbb{P}^{1,\mathrm{an}}_k$ which retracts to $\gamma$ and set 
$M(\gamma) := f(x)$. Since the function $f$ is constant along the fibres of the retraction, $M$ is well defined. It is also not identically zero
and $\mathrm{log}(M)$ is piecewise linear.   
It can be checked that the existence of the simplicial complex $\Upsilon$, the deformation retraction $\psi$ and the 
function $M : \Upsilon \to [0,1]$ imply that Theorem $1.6$ is true.  
  
  We now assume Theorem $1.6$ and show $1.2$ is true. By assumption, there exists
a finite, simplicial complex
$\Upsilon \subset \mathbb{P}^{1,\mathrm{an}}_k$, a retraction 
\begin{align*}
  \psi : I \times \mathbb{P}_k^{1,\mathrm{an}} \to \mathbb{P}_k^{1,\mathrm{an}}
\end{align*}
with image $\Upsilon$ and a function $M : \Upsilon \to [0,1]$ such that if 
$x \in \psi(e,\_)^{-1}(\gamma)$ where $M(\gamma) > 0$ and 
$L/k$ is a
complete, algebraically closed, non-Archimedean real valued field extension of $k$ such that 
$x \in \mathbb{P}^{1,\mathrm{an}}_k(L)$ then
the Berkovich open ball around $x_L$ of radius $M(\gamma)$ 
 decomposes into the disjoint union of Berkovich open balls each
 homeomorphic to it. Furthermore, if $O$ is any other Berkovich open ball around $x_L$ whose radius is 
 less than or equal to $1$ such that its 
preimage for the morphism $\phi_L^{\mathrm{an}}$ decomposes into the disjoint union of homeomorphic copies of the ball
 then its radius is less than or equal to $M(\gamma)$. If $\psi(e,x) = \gamma$ then it is clear that $f(x) = M(\gamma)$. 
Hence the function $f$ is constant along the fibres of the retraction morphism $\psi(e,\_)$. 
Furthermore, $f$ is not identically zero and $\mathrm{log}(f)$ is piecewise linear on $\Upsilon$.
This proves Theorem $1.2$.  
\end{proof}

\begin{prop}
 Theorem 1.7 implies Theorem 1.6. 
 \end{prop}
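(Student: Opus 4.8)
The plan is to derive Theorem 1.6 as the instance of Theorem 1.7 in which $V'=V=\mathbb{P}^1_k$, with $\phi$ the given finite (hence surjective) morphism and $\mathbb{P}^1_k$ normal, the fixed embedding $V\hookrightarrow\mathbb{P}^n_k$ taken to be the identity $\mathbb{P}^1_k=\mathbb{P}^1_k$ (so $n=1$), the affine chart the standard one, and $g\in S$ the function sending a $4$-tuple $(r_{i,j})_{1\le i,j\le 2}$ to $\min_{i,j}r_{i,j}$; this $g$ is continuous, non-decreasing in each coordinate and definable in the language of ordered abelian groups, so indeed $g\in S$. The first step is to record a dictionary: for every complete non-Archimedean real valued algebraically closed $L/k$ and every $x\in\mathbb{P}^{1,\mathrm{an}}_k(L)$, the family $\mathcal{O}_{x_L}$ is precisely the family of Berkovich open balls around $x_L$ of radius at most $1$, the radius being measured in the standard affinoid chart $A_{i,L}$ containing $x_L$, and on this family $g\circ h_L$ equals that radius. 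Both statements are read off from the three explicit cases $|a|<1$, $|a|=1$, $|a|>1$ computed in Section 2.2.1: there the two diagonal entries of $h_L(O)$ are always $1$ while the off-diagonal entries are $(1,r)$, $(r,r)$, $(r,1)$ respectively, with $r\le 1$ the radius of $O$ in the relevant chart, so $\min_{i,j}$ returns $r$; and the same computations exhibit $\mathcal{O}_{x_L}$ as being in bijection with $r\in(0,1]$ via this radius.

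Granting the dictionary, the rest is a transcription of Theorem 1.7. Applying it to the data above produces a generalised real interval $I=[i,e]$, a deformation retraction $\psi:I\times\mathbb{P}^{1,\mathrm{an}}_k\to\mathbb{P}^{1,\mathrm{an}}_k$ with image the finite simplicial complex $\Upsilon_g:=\psi(e,\mathbb{P}^{1,\mathrm{an}}_k)$, and a well defined function $M_g:\Upsilon_g\to\mathbb{R}_{\ge 0}$, not identically zero, with $\log M_g$ piecewise linear. Since every $O\in\mathcal{O}_{x_L}$ has all $h_L$-coordinates in $(0,1]$, the function $g\circ h_L$ takes values in $(0,1]$ and hence $M_g$ takes values in $[0,1]$. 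Setting $\Upsilon:=\Upsilon_g$, $M:=M_g$ and keeping $\psi$ yields part (1) of Theorem 1.6 together with the assertions of part (2) that do not refer to a particular $\gamma$. For the rest, fix $\gamma\in\Upsilon$ with $M(\gamma)>0$, a point $x\in\psi(e,\_)^{-1}(\gamma)$, and $L/k$ with $x\in\mathbb{P}^{1,\mathrm{an}}_k(L)$. Theorem 1.7(2) gives $W\in(g\circ h_L)^{-1}(M(\gamma))\cap\mathcal{O}_{x_L}$ whose preimage under $\phi^{\mathrm{an}}_L$ is a disjoint union of open sets each carried homeomorphically onto $W$ by $\phi^{\mathrm{an}}_L$; by the dictionary $W$ is the Berkovich open ball $B(x_L,M(\gamma))$, and, working inside an affinoid chart of $\mathbb{P}^{1,\mathrm{an}}_L$ containing $W$, Proposition 3.3 identifies the connected components of $(\phi^{\mathrm{an}}_L)^{-1}(W)$ as Berkovich open balls mapped homeomorphically onto $W$ — this is part (2)(a) of Theorem 1.6. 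Conversely, if $O$ is any Berkovich open ball around $x_L$ of radius $\le 1$ whose $\phi^{\mathrm{an}}_L$-preimage is a disjoint union of Berkovich open balls each homeomorphic to $O$ via $\phi^{\mathrm{an}}_L$, then $O\in\mathcal{O}_{x_L}$ and its preimage is a fortiori a disjoint union of open sets homeomorphic to $O$, so the final clause of Theorem 1.7(2) gives that the radius of $O$, namely $(g\circ h_L)(O)$, is $\le M_g(\gamma)=M(\gamma)$ — this is part (2)(b). Finally $\log M=\log M_g$ is piecewise linear on $\Upsilon$.

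The step I expect to be the main obstacle is the dictionary itself: checking, uniformly in $L$ and in $x_L$, that $\mathcal{O}_{x_L}$ is exactly the set of Berkovich open balls around $x_L$ of radius $\le 1$ and that $g\circ h_L$ computes that radius, which forces one to be careful with the chart-dependence of \emph{radius} when $x_L$ is close to $\infty$ (one must normalise in the chart $A_{i,L}$ containing $x_L$ rather than in a fixed affine coordinate). A secondary, minor point is that Theorem 1.7 only delivers a decomposition of the preimage into homeomorphic open sets whereas Theorem 1.6 asks for Berkovich open balls; this gap is closed by Proposition 3.3, since a finite morphism restricting to a homeomorphism from a clopen piece of the preimage of a Berkovich open ball onto that ball forces the piece to be a Berkovich open ball. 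Once the dictionary is in place the implication is purely formal.
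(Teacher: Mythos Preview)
Your choice $g=\min$ and the direct transcription of Theorem 1.7 are fine, but the dictionary fails when $|a|>1$. In case (3) of \S2.2.1 one has $h_L(O)=((1,r),(1,1))$, so $g\circ h_L(O)=r$; however this $r$ is the radius of $O$ in the chart $A_{1,L}$ with coordinate $T_2/T_1$, whereas the ball $B(x_L,s)$ in Theorem 1.6 is, following the convention fixed at the end of \S2.2.1, the ball of radius $s$ in the affine coordinate $T_1/T_2$, and these are related by $s=r|a|^2$. Thus your $M_g(\gamma)$ differs from the quantity required by Theorem 1.6 by a factor of $|a|^2$ when $|a|>1$, and $|a|$ depends on $x_L$, not merely on $\gamma=\psi(e,x_L)$; so one cannot repair this by rescaling $M_g$ alone. (A second manifestation of the same issue: for $r>1/|a|$ the corresponding $O\in\mathcal{O}_{x_L}$ contains $\infty$ and is not a Berkovich open ball in $\mathbb{A}^{1,\mathrm{an}}_L$ at all, so a cap is needed.) You flag this as the expected obstacle and propose to resolve it by \emph{redefining} ``radius'' chart-dependently, but that is not the statement of Theorem 1.6 as the paper understands it.

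This is exactly the difficulty the paper's proof addresses, and it does so by an extra step you omit. The paper takes $g=\prod$ (your $g=\min$ would serve equally well for this first stage), applies Theorem 1.7 to obtain $\Upsilon',\psi',M'$, and then \emph{enlarges} $\Upsilon'$ to a graph $\Upsilon$ containing the path joining $0$ and $\infty$. The point of the enlargement is that the retraction onto such a graph preserves $|T_1/T_2|$, so $|a|$ becomes constant along each fibre of $\psi(e,\_)$; only then can the chart radius be converted into the $\mathbb{A}^1$ radius by a factor depending on $\gamma$ alone. The paper carries this out via the piecewise formula defining $M$ in terms of $M''$ and $|T_1(p)|/|T_2(p)|$, together with the cap $\min\{1,\cdot\}$ in the case $|T_1(p)|>|T_2(p)|$. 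If you insert this enlargement-and-conversion step after invoking Theorem 1.7, your argument goes through.
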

 \begin{proof}
 To apply Theorem 1.7, we need to choose a suitable
     definable
 function $g : \mathbb{R}^{4}_{\geq 0} \to \mathbb{R}_{\geq 0}$. 
 Let $(r_1,..,r_4) \in \mathbb{R}^{4}_{\geq 0}$. We set $g(r_1,..,r_4) := \Pi_i r_i$. By Theorem $1.7$, there 
 exists a finite simplicial complex $\Upsilon'$, a deformation retraction 
 $\psi' : [i,e] \times \mathbb{P}_k^{1,\mathrm{an}} \to \mathbb{P}_k^{1,\mathrm{an}}$ with image $\Upsilon'$
  and a function 
 $M' : \Upsilon' \to \mathbb{R}_{\geq 0}$ which satisfies the following property.
 Let $L/k$ be a non-Archimedean real valued field extension of $k$ and $x \in \mathbb{P}_k^{1,\mathrm{an}}(L)$. 
 Let $\gamma = \psi'(e,x_L)$. There exists $O \in \mathcal{O}_{x_L}$ such that 
 $(g \circ h_L)(O) = M'(\gamma)$ and the open set $(\phi_L^{\mathrm{an}})^{-1}(O)$ decomposes into the disjoint 
 union of homeomorphic copies of $O$ via the morphism $\phi_L^{\mathrm{an}}$.
 
 Let $x_L$ have homogenous coordinates $[a:1]$. By 2.2.1, if $|a| \leq 1$ then 
 the family $\mathcal{O}_{x_L}$ is the set of Berkovich open balls around $x_L$ whose radius is bounded by $1$. 
 If $|a| > 1$ then $\mathcal{O}_{x_L}$ contains the set of Berkovich open balls around $x_L$. 
 As sketched in 2.2.1, the radius of these Berkovich open balls can be expressed in terms of the 
 $4$-tuple $h_L(O)$.  
    
     Using $2.2.1$ we see that if $O \in \mathcal{O}_{x_L}$ is a Berkovich open ball $B(x_L,r)$ then 
the formula which relates the radius $r$ to the tuple $h_L(O)$ varies according to the value $|a|$.       
 For this reason we modify the simplicial complex suitably.  
Let $Z_0$ be the smallest path-connected closed subspace that contains the set $\{0, \infty\}$ and 
$\Upsilon$ be a finite simplicial complex that contains  $\Upsilon' \cup Z_0$. The space $\mathbb{P}^{1,\mathrm{an}}_k$  
 admits a deformation retraction onto any finite sub graph. In particular there exists a deformation retraction 
 $\psi : [i,e] \times \mathbb{P}_k^{1,\mathrm{an}} \to \mathbb{P}_k^{1,\mathrm{an}}$ with image $\Upsilon$. The function $M'$ extends to a 
 function on $\Upsilon$ as follows. Let $p \in \Upsilon$. We set 
 $M''(p) := M'(\psi'(e,p))$. The function $M'' : \Upsilon \to \mathbb{R}_{\geq 0}$ is well   
 defined. 
 
    We now define $M : \Upsilon \to \mathbb{R}_{\geq 0}$ which will imply Theorem 1.6. 
 \begin{align*}
                             &  M''(p)  &|T_1(p)| < |T_2(p)| \\
  M(p)   := \Bigg\{   &  M''(p)^{1/2}  &|T_1(p)| = |T_2(p)| \\
                             &   \mathrm{Min}\{1, M''(p)(|T_1(p)|/|T_2(p)|)^2 \} & |T_1(p)| > |T_2(p)| 
 \end{align*}     
      
     Using $2.2.1$ it can be verified that the function $M$ is bounded above by $1$. 
     It remains to check that the function $M$ defined above satisfies the properties required by Theorem 1.6. 
Let $x_L \in \mathbb{P}_L^{1,\mathrm{an}}(L)$ have homogenous coordinates $[a:1]$ and 
let $x_L$ retract to the point $p \in \Upsilon$ via the retraction $\psi$. 

     Let $|a| > 1$. 
     By Theorem 1.7, there exists $O \in \mathcal{O}_{x_L}$ such that the preimage of $O$ 
     is the disjoint union of copies of $O$ for the morphism $\phi_L^{\mathrm{an}}$ and also that 
     $g \circ h_L$ achieves its maximal value at $O$
      amongst all elements of $\mathcal{O}_{x_L}$ which satisfy this property.
      Let $h_L(O) = ((1,r),(1,1))$. 
      It follows that that $M''(p) = r$. 
     Observe that since $x_L$ retracts to the point $p$ via the retraction $\psi$, $|(T_1/T_2)(p)| = |a|$.

         If $M(p) = 1$ we must show that
      the preimage of the Berkovich open ball
     $B(x_L,1)$ decomposes into the disjoint union of copies of itself.
       It follows from the definition of the function $M(p)$ that 
      $r \geq  1/|a|^2$.  Any $O' \in \mathcal{O}_{x_L}$
       such that $h_L(O') = ((1,s),(1,1))$ with $s \leq r$ must be such that  
      its preimage for the morphism $\phi_L^{\mathrm{an}}$ is the disjoint union of homeomorphic copies of itself. 
      In particular we may choose $O'$ for which $h_L(O') = ((1,1/|a|^2),(1,1))$.  
      By $2.2.1$ the open neighbourhood 
      $O'$ is a Berkovich open ball 
      around $x_L$ of radius $1$. 
       
             Let $M(p) < 1$.
       By definition of the function $M$ we have that 
      $r < 1/|a|^2$. Using 2.2.1, we see that the open 
      set $O$ corresponds to the Berkovich open ball around $x_L$ of radius 
      $r|a|^2$. Let $B(x_L,s)$ be a Berkovich open ball around $x_L$ such that its preimage decomposes into 
      the disjoint union of homeomorphic copies of itself via the morphism $\phi_L^{\mathrm{an}}$. By $2.2.1$, we have that   
      $h_L(B(x_L,s)) = ((1,s/|a|^2),(1,1))$. Theorem 1.7 then implies that $s \leq r|a|^2$. 
      
        If $|a| \leq 1$ then
from our construction of $\Upsilon$ the point
 $x_L$ must retract to $p \in \Upsilon$ such that $|T_1(p)| \leq |T_2(p)|$. 
As done above, by our choice of $g$, the calculations in Section 2.2.1 and Theorem 1.7, it can be shown that 
the preimage of the Berkovich open ball $B(x_L, M(p))$ for the morphism 
$\phi_L^{\mathrm{an}}$ decomposes into the disjoint union of homeomorphic copies of $B(x_L,M(p))$
via the morphism $\phi_L^{\mathrm{an}}$. Furthermore, if $B(x_L,s)$ is a Berkovich open ball such that its preimage 
splits into the disjoint union of homeomorphic copies of $B(x_L,s)$ via $\phi_L^{\mathrm{an}}$ then by 2.2.1,
$s \leq M(p)$.   
 
       That the function $\mathrm{log}(M)$ is piecewise linear on $\Upsilon$ follows from the fact that the function $\mathrm{log}(M')$ is piecewise linear on 
       the finite graph $\Upsilon'$.  
              \end{proof}

\textbf{Acknowledgments:} Firstly, I would like to thank my advisor 
 Professor François Loeser for his support and guidance during this period of work. I would also like to thank
Ariyan Javanpeykar, Giovanni Rosso and Yimu Yin for their suggestions and comments. I am grateful to 
J\'er\^ome Poineau and the referee whose suggestions 
extended the scope of the principal results and improved the quality of the paper.

\end{document}